\newtheorem{lemma}{Lemma}[section]
\newtheorem{theorem}[lemma]{Theorem}
\newtheorem{remark}[lemma]{Remark}
\newtheorem{proposition}[lemma]{Proposition}
\newtheorem{corollary}[lemma]{Corollary}
\newtheorem{definition}[lemma]{Definition}
\begin{document}
\title{Global Euler obstruction, global Brasselet numbers and critical points}
\author{Nicolas Dutertre and Nivaldo G. Grulha Jr.}
\address{Aix-Marseille Universit\'e, CNRS, Centrale Marseille, I2M, UMR 7373,
13453 Marseille, France.}
\email{nicolas.dutertre@univ-amu.fr}

\address{Universidade de S\~{a}o Paulo, Instituto de Ci\^{e}ncias Matem\'{a}ticas e de Computa\c{c}\~{a}o - USP  Av. Trabalhador S\~{a}o-carlense, 400 - Centro, Caixa Postal: 668 - CEP: 13560-970 - S\~{a}o Carlos - SP -
    Brazil.}
\email{njunior@icmc.usp.br}


\begin{abstract}Let $X \subset \Bbb{C}^n$ be an equidimensional complex algebraic set and let $f: X \to \mathbb{C}$ be a polynomial function. For each $c \in \Bbb{C}$, we define the global Brasselet number of $f$ at $c$, a global counterpart of the Brasselet number defined by the authors in a previous work, and the Brasselet number at infinity of $f$ at $c$.
Then we establish several formulas relating these numbers to the topology of $X$ and the critical points of $f$.
\end{abstract}

\maketitle
\markboth{N. Dutertre and N. G. Grulha Jr.}{Global Euler obstruction, global Brasselet numbers and criticical points}

\section{Introduction}

The local Euler obstruction is an invariant defined by MacPherson in \cite{M} as one of the main ingredients in his proof of the Deligne-Grothendieck conjecture on the existence of Chern classes for singular varieties. The local Euler obstruction at $0 \in X$, where $X$ is a sufficietly small representative of the equidimensional analytic germ $(X,0)$, is denoted by ${\rm Eu}_{X}(0)$. After MacPherson's pioneer work, the Euler obstruction was studied by many authors. Let us mention briefly some of the most important results on this subject. If $\mathcal{V}= \{V_i\}_{i=1}^t$ is a Whitney stratification of $X$, then Brylinski, Dubson and Kashiwara \cite{BrylinskiDubsonKashiwara} proved a famous formula that relates the ${\rm Eu}_{\overline{V_i}}$'s to the Euler characteristic of the normal links of the strata. In \cite{LT1}, L\^e and Teissier showed that ${\rm Eu}_X(0)$ is equal to an alterned sum of multiplicities of generic polar varieties of $X$ at $0$. In \cite{BLS}, Brasselet, L\^{e} and Seade proved a Lefschetz type formula for ${\rm Eu}_X(0)$, i.e. they relate ${\rm Eu}_X(0)$ to the topology of the real Milnor fibre on $X$ of a generic linear function. There are also integral formulas for ${\rm Eu}_X(0)$ in \cite{Lo} and \cite{DutertreIsrael}.

 In \cite{BMPS}, Brasselet, Massey, Parameswaran and Seade defined a relative version of the local Euler obstruction, introducing information for a function $f$ defined on the variety $X$, called the Euler obstruction of a function and denoted by ${\rm Eu}_{f,X}(0)$. They prove a Lefschetz type formula for this invariant. The Euler obstruction of a function can be seen as a generalization of the Milnor number (\cite{BMPS,SeadeTibarVerjovsky1,G}). For instance, in \cite{SeadeTibarVerjovsky1}, Seade, Tib\u{a}r and Verjovsky showed that ${\rm Eu}_{f,X}(0)$ is equal up to sign to the number of critical points of a Morsefication of $f$ lying on the regular part of $X$.

In \cite{DutertreGrulhaAdv}, we study topological properties of functions defined on analytic complex varieties. In order to do it, we define an invariant called the Brasselet number, denoted by ${\rm B}_{f,X}(0)$. This number is well defined even when $f$ has arbitrary singularity. When $f$ has isolated singularity we have ${\rm B}_{f,X}(0)= {\rm Eu}_{X}(0)-{\rm Eu}_{f,X}(0)$. We established several formulas for ${\rm B}_{f,X}$, among them a relative version of the multiplicity formula of L\^e and Teissier, a relative version of the Brylinski-Dubson-Kashiwara formula and an integral formula.

In a manner similar to the local case, in \cite{SeadeTibarVerjovsky2}, working with an affine equidimensional singular variety $X \subset \mathbb{C}^{N}$, Seade, Tib\u{a}r and Verjovsky defined the global Euler obstruction, denoted by ${\rm Eu}(X)$. When $X$ is smooth the global Euler obstruction of $X$ coincides with the Euler characteristic of $X$. They prove a global version of the L\^e-Teissier polar multiplicities formula. Later, this formula was generalized
in \cite{ST} to an index formula for MacPherson cycles.

As the Euler obstruction of a function and the Brasselet number are useful to study the singularities of $f$ in the local case, we introduce in this work the global Brasselet numbers and the Brasselet numbers at infinity, in order to  investigate the topological behavior of the singularities, globally and at the infinity, of a given polynomial function $f$ defined on an algebraic variety $X \subset \mathbb{C}^{N}$. The main references  we use in this paper about the study of singularities at infinity  are \cite{Dias,DRT,Tibar}.

In Section 2 we give prerequisities on the topology of complex algebraic sets: stratified Morse functions, the complex link and the normal Morse datum, constructible functions, the local Euler obstruction and the Brasselet number, the global Euler obstruction. In Section 3 we recall the notions of $t$-regularity at infinity and $\rho$-regularity at infinity, some basic results and we adapt them to the stratified setting. 

In Section 4 we define the global Brasselet numbers and the Brasselet numbers at infinity. We compare the global Brasselets number of $f$ with the global Euler obstruction of the fibres of $f$. The relation presented in Corollary \ref{BDKglobal1} can be seen as a global relative version of the local index formula of Brylinsky, Dubson and Kashiwara \cite{BrylinskiDubsonKashiwara}.

In Section 5  we prove several formulas that relate the number of critical points of a Morsefication of a polynomial function $f$ on an algebraic set $X$, to the global Brasselet numbers and the Brasselet numbers at infinity of $f$. The main result in this section is Theorem \ref{TopologyXgenericfibre}. From this result we obtain many interesting corollaires. Corollary \ref{BDKglobal2}, for instance, is a Brylinski--Dubson--Kashiwara type formula for the total Brasselet number at infinity. We also prove a relative version of the polar multiplicity formula of Seade, Tib\u{a}r and Verjovsky (Corollary \ref{BrasseletNumberAndIntersectionMultiplicity2}).

We finish the paper at Section 6, relating the global Euler obstruction of an equidimensional algebraic set $X \subset \mathbb{C}^{n}$ to the Gauss-Bonnet curvature of its regular part and the Gauss-Bonnet curvature of the regular part of its link at infinity. The result is a global counterpart of the formula that the first author established for analytic germs in \cite{DutertreIsrael}.

\section{Prerequisites on the topology of complex algebraic sets}

In this section, we work with a reduced complex algebraic set $X \subset \mathbb{C}^n$ of dimension $d$. We assume that $X$ is equipped with a finite Whitney stratification $\mathcal{V}$ whose strata are connected. We denote by $X_{\rm reg}$ the regular part of $X$, i.e. the union of all the strata of dimension $d$. 

\subsection{Stratified Morse functions}

The main reference for this subject is \cite{GMP}.

\begin{definition}
{\rm Let $x$ be a point in $X$ and let $V_b$ be the stratum that contains it. A degenerate tangent plane of the stratification $\mathcal{V}$ is an element $T$ (of an appropriate Grassmannian) such that $T = \lim_{x_i \to x} T_{x_i} V_a $, where $V_a$ is a stratum that contains $V_b$ in its frontier and where the $x_i$'s belong to $V_a$.}
\end{definition}

\begin{definition}
{\rm A degenerate covector of $\mathcal{V}$ at a point $x \in X$ is a covector which vanishes on a degenerate tangent plane of $\mathcal{V}$ at $x$, i.e., an element $\eta$ of $T^{*}_{x}\mathbb{C}^n$ such that there exists a degenerate tangent plane $T$   of the stratification at $x$ with ${\eta}(T)=0$. }
\end{definition}

Let $f : X \to \mathbb{C}$ be an analytic function. We assume that $f$ is the restriction to $X$ of an analytic function $F : \mathbb{C}^n \to \mathbb{C}$, i.e. $f=F_{\vert X}$. A point $x$ in $X$ is a critical point of $f$ if it is a critical point of $F_{\vert V(x)}$, where $V(x)$ is the stratum containing $x$. 

\begin{definition}
{\rm Let $x$ be a critical point of $f$. We say that $f$ is general at $x$ with respect to the stratification $\mathcal{V}$  if $DF(q)$ is not a degenerate covector of $\mathcal{V}$ at $x$.

We say that $f$ is general with respect to $\mathcal{V}$ if it is general at all critical points with respect to $\mathcal{V}$.}
\end{definition}

\begin{definition}
{\rm Let $x$ be a critical point of $f$. We say that $x$ is a stratified Morse critical point of $f$ if $f$ is general at $x$ and the function $f_{\vert V(x)} : V(x) \to \mathbb{C}$ has a non-degenerate critical point at $x$ when ${\rm dim} V(x) >0$.

We say that that $f$ is a stratified Morse function if it admits only stratified Morse critical points.}  
\end{definition}

\subsection{The complex link and the normal Morse datum}
The complex link is an important object in the study of the topology of complex analytic sets. It is analogous to the Milnor fibre and was studied first in \cite{Le1}. It plays a crucial role in complex stratified Morse theory (see \cite{GMP}) and appears in general bouquet theorems for the Milnor fibre of a function with isolated singularity (see \cite{Le2, Si, Ti}). 

Let $V$ be a stratum of the stratification $\mathcal{V}$ of $X$ and let $x$ be a point in $V$. Let $g : (\mathbb{C}^n,x) \rightarrow (\mathbb{C},0)$ be an analytic complex function-germ such that the differential form $Dg(x)$ is not a degenerate covector of $\mathcal{V}$ at $x$. Let $N^{\mathbb{C}}_{x,V}$ be a normal slice to $V$ at $x$, i.e. $N^{\mathbb{C}}_{x,V}$ is a closed complex submanifold of $\mathbb{C}^n$ which is transversal to $V$ at $x$ and $N^{\mathbb{C}}_{x,V} \cap V =\{x\}$.
\begin{definition}
{\rm The complex link $\mathcal{L}_V^X$ of $V$ is defined by
$$\mathcal{L}_V^X = X\cap N_{x,V}^{\mathbb{C}}  \cap B_{\epsilon}(x)\cap \{g=\delta\} ,$$
where $ 0< \vert \delta \vert \ll \epsilon \ll 1$.  Here $B_{\epsilon}(x)$ is the closed ball of radius $\epsilon$ centered at $x$ .

The normal Morse datum ${\rm NMD}(V)$ of $V$ is the pair of spaces
$${\rm NMD}(V) =\left(X\cap N_{x,V}^{\mathbb{C}} \cap B_{\epsilon}(x), X\cap N_{x,V}^{\mathbb{C}} \cap B_{\epsilon}(x)\cap \{g=\delta\} \right).$$}
\end{definition}
The fact that these two notions are well-defined, i.e. independent of all the choices made to define them, is explained in \cite{GMP}.  

\subsection{Constructible functions}
We start with a presentation of Viro's method of integration with respect to the Euler characteristic with compact support \cite{Vi}. We work in the semi-algebraic setting.

\begin{definition}
{\rm Let $Y \subset \mathbb{R}^n$ be a semi-algebraic set. A constructible function $\alpha : Y \rightarrow \mathbb{Z}$ is a $\mathbb{Z}$-valued function that can be written as a finite sum:
$$\alpha=\sum_{i\in I} m_i {\bf 1}_{Y_i},$$
where $Y_i$ is a semi-algebraic subset of $Y$ and ${\bf 1}_{Y_i}$ is the characteristic function on $Y_i$.}
\end{definition}

The sum and the product of two constructible functions on $Y$ are again constructible. The set of constructible functions on $Y$ is thus a commutative ring, denoted by $F(Y)$.

\begin{definition}
{\rm If $\alpha \in F(Y)$ and $W \subset Y$ is a semi-algebraic set then the Euler characteristic $\chi(W,\alpha)$ is defined by  
$$\chi(W,\alpha)= \sum_{i\in I} m_i \chi_c(W \cap Y_i),$$ where $\alpha=\sum_{i \in I} m_i {\bf 1}_{Y_i}$ and $\chi_c$ is the Euler characterictic of Borel-Moore homology.}
\end{definition}

The Euler characteristic $\chi(W,\alpha)$ is also called the Euler integral of $\alpha$ and 
denoted by $\int_W \alpha d \chi_c$. Here we follow the terminology and notations used in \cite{BMPS,DutertreGrulhaAdv,ST}.

\begin{definition}
{\rm Let $f : Y \rightarrow Z$ be a continuous semi-algebraic map and let $\alpha : Y \rightarrow \mathbb{Z}$ be a constructible function. The pushforward $f_*\alpha$ of $\alpha$ along $f$ is the function $f_* \alpha: Z \rightarrow \mathbb{Z}$ defined by:
$$f_* \alpha(z)=\chi(f^{-1}(z),\alpha) .$$}
\end{definition} 

\begin{proposition}
The pushforward of a  constructible function is a constructible function.
\end{proposition}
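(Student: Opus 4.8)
The plan is to reduce the statement to a finite decomposition of the target by means of a stratification argument, and then invoke the constructibility of fibres together with the local conic structure of semi-algebraic sets. First I would write $\alpha = \sum_{i \in I} m_i \mathbf{1}_{Y_i}$ with each $Y_i \subset Y$ semi-algebraic, so that by linearity it suffices to treat the case $\alpha = \mathbf{1}_{Y_i}$; in that case $f_*\alpha(z) = \chi_c(f^{-1}(z) \cap Y_i)$, which is the Euler characteristic with compact support of the fibre of the restriction $f|_{Y_i} : Y_i \to Z$. Thus the whole statement follows once we know: given a continuous semi-algebraic map $g : Y' \to Z$, the function $z \mapsto \chi_c(g^{-1}(z))$ is constructible on $Z$.

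Next I would establish this last assertion using Hardt's semi-algebraic triviality theorem (or, equivalently, a stratification of $g$): there is a finite semi-algebraic partition $Z = \bigsqcup_{j \in J} Z_j$ such that $g$ is semi-algebraically trivial over each $Z_j$, i.e. $g^{-1}(Z_j)$ is semi-algebraically homeomorphic to $Z_j \times g^{-1}(z_j)$ over $Z_j$ for a chosen base point $z_j \in Z_j$, compatibly with the subsets $Y_i$. In particular, for all $z \in Z_j$ the fibre $g^{-1}(z)$ is semi-algebraically homeomorphic to the fixed fibre $g^{-1}(z_j)$, so $\chi_c(g^{-1}(z))$ is constant on each $Z_j$, equal to some integer $n_j := \chi_c(g^{-1}(z_j))$. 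Hence $f_*\alpha = \sum_{j \in J}\left(\sum_{i \in I} m_i\, \chi_c\big(g^{-1}(z_j)\cap Y_i\big)\right)\mathbf{1}_{Z_j}$ is a finite $\mathbb{Z}$-linear combination of characteristic functions of semi-algebraic sets, which is exactly a constructible function on $Z$. One should check that $\chi_c$ is indeed a semi-algebraic homeomorphism invariant and is additive on semi-algebraic partitions, which is standard (it is the Euler characteristic of Borel-Moore homology, additive by the long exact sequences of the pair).

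The main obstacle I anticipate is purely an organizational one: making sure the triviality partition can be chosen simultaneously compatible with all the finitely many $Y_i$, so that each piece $g^{-1}(z) \cap Y_i$ also varies in a locally trivial way; this is handled by applying Hardt's theorem to the map $f : Y \to Z$ together with the finite family of semi-algebraic subsets $\{Y_i\}_{i\in I}$, which the theorem permits. A secondary, more bookkeeping point is the passage from $\chi_c$ of fibres to genuine constructibility: one must phrase things so that the coefficients $n_j$ (or $\sum_i m_i \chi_c(g^{-1}(z_j)\cap Y_i)$) are finite integers, which follows because each fibre is a compact-support semi-algebraic set of finite type and hence has finite Borel-Moore Betti numbers. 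With these two points addressed, the proof is essentially the triviality theorem plus additivity of $\chi_c$, and no further calculation is required.
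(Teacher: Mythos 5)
Your argument is correct: the reduction by linearity to the case $\alpha=\mathbf{1}_{Y_i}$, followed by Hardt's semi-algebraic triviality theorem applied to $f$ together with the finite family $\{Y_i\}_{i\in I}$, and the invariance and additivity of $\chi_c$ under semi-algebraic homeomorphism, is the standard proof of this fact. The paper itself states this proposition without proof, as part of the background on Viro's Euler-characteristic integration, so there is nothing to contrast; your write-up supplies exactly the argument the authors are implicitly relying on.
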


\begin{theorem}[Fubini's theorem] Let $f : Y \rightarrow Z$ be a continuous semi-algebraic map and let $\alpha$ be a constructible function on $Y$. Then we have:
$$\chi(Z,f_* \alpha)=\chi(Y,\alpha).$$
\end{theorem}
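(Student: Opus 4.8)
The plan is to reduce to the characteristic function of a single semi-algebraic set and then apply Hardt's semi-algebraic triviality theorem together with the additivity and multiplicativity of the Euler characteristic $\chi_c$.

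First, since both maps $\alpha \mapsto \chi(Z, f_*\alpha)$ and $\alpha \mapsto \chi(Y,\alpha)$ are $\mathbb{Z}$-linear, writing $\alpha = \sum_{i\in I} m_i \mathbf{1}_{Y_i}$ it suffices to prove the identity for $\alpha = \mathbf{1}_{Y_i}$. Replacing $f$ by its restriction $g := f_{|Y_i} : Y_i \to Z$, one has $f_*\mathbf{1}_{Y_i}(z) = \chi_c\big(f^{-1}(z)\cap Y_i\big) = \chi_c\big(g^{-1}(z)\big)$, so the claim becomes $\chi(Z, g_*\mathbf{1}_{Y_i}) = \chi_c(Y_i)$ for an arbitrary continuous semi-algebraic map $g$. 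Renaming $Y_i$ as $Y$, we are reduced to showing $\chi(Z, g_*\mathbf{1}_Y) = \chi_c(Y)$.

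Next, by Hardt's theorem there is a finite partition $Z = \bigsqcup_{k} Z_k$ into semi-algebraic sets over which $g$ is semi-algebraically trivial: for each $k$ there is a semi-algebraic set $F_k$, the fibre of $g$ over any point of $Z_k$ (with the convention $F_k = \emptyset$ when $Z_k \cap g(Y) = \emptyset$), together with a semi-algebraic homeomorphism $g^{-1}(Z_k) \cong Z_k \times F_k$ commuting with the projections onto $Z_k$. In particular the function $g_*\mathbf{1}_Y$ takes the constant value $\chi_c(F_k)$ on each $Z_k$; by the preceding Proposition it is constructible, and $g_*\mathbf{1}_Y = \sum_k \chi_c(F_k)\,\mathbf{1}_{Z_k}$. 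Hence $\chi(Z, g_*\mathbf{1}_Y) = \sum_k \chi_c(F_k)\,\chi_c(Z_k)$. On the other hand, the sets $g^{-1}(Z_k)$ form a finite semi-algebraic partition of $Y$, so by additivity of $\chi_c$ under such partitions and its multiplicativity on products,
$$\chi_c(Y) = \sum_k \chi_c\big(g^{-1}(Z_k)\big) = \sum_k \chi_c\big(Z_k\times F_k\big) = \sum_k \chi_c(Z_k)\,\chi_c(F_k),$$
which is exactly the same quantity, proving the theorem.

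The only substantial inputs are Hardt's semi-algebraic triviality theorem and the standard facts that $\chi_c$ is additive under finite semi-algebraic partitions and multiplicative on products. The point requiring care is the observation that the trivialization over $Z_k$ forces $g_*\mathbf{1}_Y$ to be locally constant on the strata $Z_k$ with value precisely $\chi_c(F_k)$; this is what makes the two summations agree term by term, and it is also where one must keep track of the empty-fibre case so that no spurious contributions appear.
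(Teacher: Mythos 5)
Your proof is correct. Note that the paper states this Fubini theorem without proof, as a standard fact from the calculus of integration with respect to the Euler characteristic (Viro's framework), so there is no in-paper argument to compare against; the route you take --- reduction by linearity to a single characteristic function, Hardt's semi-algebraic triviality over a finite partition of the target, and then additivity of $\chi_c$ on partitions together with multiplicativity on products --- is exactly the standard proof of this result, and your handling of the empty-fibre pieces of the partition is the right precaution.
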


Let us go back to the complex situation. Here we write $\mathcal{V}=\{V_1,\ldots,V_t\}$ for the Whitney stratification of $X$. 

\begin{definition}\label{ConstructibleAndStratification}
{\rm A constructible function with respect to the stratification $\mathcal{V}$ of $X$ is a function $\alpha: X \to \mathbb{Z}$ which is constant on each stratum $V$ of the stratification.} 
\end{definition}
This means that there exist integers $n_i, i \in \{1,\ldots,t\}$, such that 
$\alpha=\sum_{i=1}^t n_{i} \cdot {\bf 1}_{V_i}$. 
In most of the cases that we will consider, we can use the topological Euler characteristic $\chi$ instead of $\chi_c$. First since each $V_i$ is an even-dimensional submanifold, by Poincar\'e duality $\chi_c(V_i)$ is equal to $\chi(V_i)$ and so 
$\chi(X,\alpha)= \sum_{i=1}^t n_i \chi (V_i)$.
Now let $B \subset \mathbb{C}^n $ be an euclidian closed ball that intersects $X$ transversally (in the stratified sense). We will give four equalities for  $\chi(X \cap B, \alpha)$. By additivity of $\chi_c$, we have 
$$\chi(X \cap B,\alpha) =\chi(X \cap \mathring{B},\alpha) + 
\chi(X \cap \partial B ,\alpha).$$ 
But $X \cap \partial B$ is Whitney stratified by odd dimensional strata and so $\chi(X \cap \partial B)=0$ (see Lemma 5.0.3 in \cite{Schu} or Proposition 1.6 in \cite{McCroryParusinski}). Therefore, we have
$$\chi(X \cap B,\alpha) = \chi(X \cap \mathring{B},\alpha)=
\sum_{i=1}^t n_i \chi_c( V_i \cap \mathring{B}),$$
and by Poincar\'e duality,
$$\chi(X \cap B,\alpha)= \sum_{i=1}^t n_i \chi( V_i \cap \mathring{B}).$$
But each $V_i \cap B$ is a manifold with boundary, so $\chi( V_i \cap \mathring{B})=\chi(V_i \cap B)$ and 
$$\chi(X \cap B,\alpha)= \sum_{i=1}^t n_i \chi( V_i \cap B).$$
Similarly, if $E= \mathbb{C}^n \setminus \mathring{B}$ then
$$\chi(X \cap E,\alpha)= \sum_{i=1}^t n_i \chi_c( V_i \cap E)
=\sum_{i=1}^t n_i \chi_c( V_i \cap \mathring{E})$$ $$=
\sum_{i=1}^t n_i \chi( V_i \cap \mathring{E})=\sum_{i=1}^t n_i \chi( V_i \cap E).$$
If the radius of $B$ is sufficiently big, then $X \cap \partial B$ is homeomorphic to the link at infinity of $X$, denoted by ${\rm Lk}^\infty(X)$, and $X \cap B$ is a retract by deformation of $X$ which implies that $\chi(X)=\chi(X \cap B)$. Since $X \cap B$ is compact, $\chi(X \cap B)=\chi_c (X \cap   B)$ and so, by additivity, $\chi_c (X)= \chi (X)+ \chi_c( X \cap \mathring{E})$. But $X \cap \mathring{E}$ is homeomorphic to the product of ${\rm Lk}^\infty(X)$ and an open interval in $\mathbb{R}$. Since $\chi_c ({\rm Lk}^\infty(X))=0$, by multiplicativity of $\chi_c$ we obtain that $\chi_c(X \cap \mathring{E})=0$ and finally that $\chi(X)=\chi_c(X)$.

\begin{definition}\label{NMI}
{\rm Let $\alpha : X \to Z$ be a constructible function with respect to the stratification $\mathcal{V}$. Its normal Morse index $\eta(V,\alpha)$ along $V$ is defined by
$$\eta(V,\alpha)=\chi({\rm NMD}(V),\alpha )=\chi(X \cap N_{x,V}^{\mathbb{C}} \cap B_{\epsilon}(x),\alpha)-\chi(\mathcal{L}_{V}^X,\alpha),$$
where $x$ is a point in $V$.}
\end{definition}

If $Z\subset X$ is a closed union of strata, then $\eta(V,{\bf 1}_{Z})=1-\chi(\mathcal{L}_{V}\cap Z)$. 

\subsection{The local Euler obstruction and the Brasselet number}

Here we assume that $X$ is equidimensional. 
The Euler obstruction at $x \in X$, denoted by ${\rm Eu}_{X}(x)$, was defined by MacPherson, using $1$-forms and the Nash blow-up (see \cite{M} for the original definition).  An equivalent definition of the Euler obstruction was given by Brasselet and Schwartz in the context of vector fields \cite {BS}. Roughly speaking, ${\rm Eu}_{X}(x)$ is the obstruction for extending a continuous stratified radial vector field around  $x$ in $X$ to a non-zero section of the Nash bundle over the Nash blow-up of $X$.

The Euler obstruction is a constructible function and there are two distinguished bases for the free abelian group of constructible functions: the characteristic functions ${\bf 1}_{\overline{V}}$ and the Euler obstruction ${\rm Eu}_{\overline{V}}$ of the closure $\overline{V}$ of all strata $V$. Moreover, the key role of the Euler
obstruction comes from the following identities (see \cite{ST} p.34 or \cite{Schu} p.292 and p.323-324):
$$\eta(V',{\rm Eu}_{\overline{V}})= 1\hbox{ if }V'=V,$$ and:
$$\eta(V', {\rm Eu}_{\overline{V}})=0 \hbox{ if } V' \neq V.$$

In \cite{BLS}, Brasselet, L\^e and Seade study the Euler obstruction using hyperplane sections, following ideas of Dubson and Kato. Let us assume that $0$ belongs to $X$. 

\begin{theorem}[\cite{BLS}]\label{BLS}
For each generic linear form $l$, there is $\epsilon_0$
such that for any $\epsilon$ with $0<\epsilon<
\epsilon_0$ , the Euler obstruction of $(X,0)$ is equal to:
$${\rm Eu}_X(0)=\chi \big( X \cap B_\epsilon (0) \cap l^{-1}(\delta), {\rm Eu}_X \big),$$ where  $0 < \vert \delta \vert \ll \epsilon \ll 1$.
\end{theorem}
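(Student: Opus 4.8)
The plan is to reduce the statement to a single vanishing of a normal Morse index and then to invoke the polar (or obstruction‑theoretic) description of ${\rm Eu}_X$ for that vanishing. First I would arrange that $\{0\}$ is a stratum of $\mathcal{V}$: if $0$ lies in a positive‑dimensional stratum $V_k$, replace $V_k$ by the two pieces $V_k\setminus\{0\}$ and $\{0\}$. This is again a Whitney stratification with connected strata — the conditions for the pair $(\{0\},V_k\setminus\{0\})$ follow from the $C^1$‑regularity of $V_k$ at $0$, and those for $(\{0\},V_j)$ with $0\in\overline{V_j}$ follow from the Whitney conditions for $(V_k,V_j)$ by taking the constant sequence $0$ in $V_k$ — and since ${\rm Eu}_X$ is intrinsic, neither side of the formula changes. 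With $\{0\}$ a ($0$‑dimensional) stratum, the normal slice $N^{\mathbb{C}}_{0,\{0\}}$ can be taken equal to $\mathbb{C}^n$ and a generic linear form $l$ with $l(0)=0$ is admissible as the function $g$ defining the complex link, so that $X\cap B_\epsilon(0)\cap l^{-1}(\delta)=\mathcal{L}^X_{\{0\}}$ and ${\rm NMD}(\{0\})=\bigl(X\cap B_\epsilon(0),\,\mathcal{L}^X_{\{0\}}\bigr)$.

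Next I would use the local conic structure of $(X,0)$. Refining $\mathcal{V}$ so that it is compatible with this structure, each stratum $V_i\neq\{0\}$ meets a small ball in a set homeomorphic to $(V_i\cap\partial B_\epsilon(0))\times(0,\epsilon]$, hence $\chi_c(V_i\cap B_\epsilon(0))=0$ by multiplicativity of $\chi_c$ and $\chi_c((0,\epsilon])=0$, and therefore $\chi(X\cap B_\epsilon(0),\alpha)=\alpha(0)$ for every constructible $\alpha$. Combining this with Definition \ref{NMI} gives
$$\chi\bigl(X\cap B_\epsilon(0)\cap l^{-1}(\delta),{\rm Eu}_X\bigr)=\chi\bigl(X\cap B_\epsilon(0),{\rm Eu}_X\bigr)-\eta(\{0\},{\rm Eu}_X)={\rm Eu}_X(0)-\eta(\{0\},{\rm Eu}_X),$$
so the theorem is equivalent to the vanishing $\eta(\{0\},{\rm Eu}_X)=0$. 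Using the identities $\eta(V',{\rm Eu}_{\overline{V}})=1$ if $V'=V$ and $0$ otherwise, one has ${\rm Eu}_X=\sum_i\eta(V_i,{\rm Eu}_X){\rm Eu}_{\overline{V_i}}$, so this vanishing says precisely that ${\bf 1}_{\{0\}}$ does not occur in the expansion of ${\rm Eu}_X$ in the distinguished basis.

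The hard part is exactly this last vanishing; the preceding steps are formal, and $\eta(\{0\},{\rm Eu}_X)=0$ is the substantive content of \cite{BLS}. To establish it I would either (i) use the Lê–Teissier polar multiplicity formula \cite{LT1} for ${\rm Eu}_X(0)$: for a generic linear form $l$ the critical points of $l$ on the strata of $\mathcal{V}$ do not accumulate at $0$, so for $\epsilon$ small $l$ has no stratified critical point in $X\cap B_\epsilon(0)\setminus\{0\}$, and comparing the polar multiplicities of $X$ at $0$ with those of the generic hyperplane section $X\cap l^{-1}(0)$ forces the cancellation; or (ii) argue directly with MacPherson's / Brasselet–Schwartz's obstruction‑theoretic definition of ${\rm Eu}_X$ \cite{M,BS}, extending a stratified radial vector field adapted to $l$ across $X\cap B_\epsilon(0)$ to a nonvanishing section of the Nash bundle and checking that the passage over the stratum $\{0\}$ contributes no new obstruction when $l$ is generic — this is the route taken in \cite{BLS}. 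Once this is in place, the formula follows from the displayed identity.
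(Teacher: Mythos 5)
The paper does not actually prove this statement: it is quoted from \cite{BLS} as a known result, so there is no internal proof to compare against. Your reduction is nonetheless correct and clean: splitting off $\{0\}$ as a stratum, using the local conic structure to get $\chi(X\cap B_\epsilon(0),\alpha)=\alpha(0)$ for any constructible $\alpha$, and rewriting the right-hand side via Definition \ref{NMI} as ${\rm Eu}_X(0)-\eta(\{0\},{\rm Eu}_X)$ are all sound, and they correctly isolate the entire content of the theorem in the single vanishing $\eta(\{0\},{\rm Eu}_X)=0$.

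What you did not notice is that this vanishing is already recorded in the paper's prerequisites. Since $X$ is reduced and equidimensional, ${\rm Eu}_X={\rm Eu}_{\overline{V_t}}$ with $V_t=X_{\rm reg}$, and the displayed identity $\eta(V',{\rm Eu}_{\overline{V}})=0$ for $V'\neq V$ (Section 2.4, quoted from \cite{ST} and \cite{Schu}) applied with $V=X_{\rm reg}$ and $V'=\{0\}$ gives exactly $\eta(\{0\},{\rm Eu}_X)=0$, valid as soon as $d\geq 1$ (for $d=0$ the theorem is false as stated, which your reduction makes visible: the identity needs $V'\neq V$). So within the framework of this paper your argument closes immediately and your last paragraph is unnecessary. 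If you instead insist on proving the vanishing from scratch, be aware that your route (i) is not an argument as written --- ``comparing the polar multiplicities forces the cancellation'' is precisely the nontrivial content of \cite{LT1} and \cite{BLS} and requires the full induction on generic hyperplane sections --- and route (ii) is only a pointer to \cite{BLS}. Since the identity $\eta(V',{\rm Eu}_{\overline{V}})=\delta_{VV'}$ and the theorem of \cite{BLS} are essentially equivalent, one of them must be taken as input; citing the identity, as the paper implicitly does, is the shortest honest path.
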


 Let $f : X \rightarrow \mathbb{C}$ be  a holomorphic function. We assume that $f$ has an isolated singularity (or an isolated critical point) at $0$, i.e. that $f$ has no critical point in a punctured neighborhood of $0$ in $X$.

In \cite{BMPS} Brasselet, Massey, Parameswa\-ran and Seade introduced an invariant  which measures, in a way, how far the equality given in Theorem \ref{BLS} is from being true if we replace the generic linear form $l$ with  some other function on $X$ with at most an isolated stratified critical point at $0$. This number is called the Euler obstruction of a function and denoted by $ {\rm Eu}_{f,X}(0)$. 
The following result is the Brasselet, Massey, Parameswaran and Seade formula \cite {BMPS} that compares, in the same point, the local Euler obstruction with the Euler obstruction of a function.

\begin{theorem}\label{BMPS}
Let $ f : X  \to \mathbb C$ be a function with an isolated
singularity at $0$. For $0 < \vert \delta \vert \ll \varepsilon \ll 1$ we have:
$${\rm Eu}_X(0) - {\rm Eu}_{f,X}(0)= \chi \big( X \cap B_\epsilon (0) \cap f^{-1}(\delta), {\rm Eu}_X \big),$$ where  $0 < \vert \delta \vert \ll \epsilon \ll 1$.
\end{theorem}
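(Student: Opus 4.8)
The plan is to express the two invariants as primary obstruction classes on the Nash modification $\nu\colon\widetilde X\to X$ and to read off their difference from the Milnor fibre of $f$ by stratified Morse theory. Recall (Brasselet--Schwartz, \cite{BS,BMPS}) that ${\rm Eu}_X(0)$ is the obstruction to extending, over $\nu^{-1}(X\cap B_\epsilon)$, the Nash lift of a radial stratified vector field $\zeta$ prescribed on $\nu^{-1}(X\cap S_\epsilon)$, while ${\rm Eu}_{f,X}(0)$ is the analogous obstruction for the Nash lift $\widetilde v$ of the stratified gradient vector field $v$ of ${\rm Re}(f)$, i.e. $v|_V=\nabla({\rm Re}\,f|_V)$ on each stratum $V$. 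Since $f$ has an isolated stratified critical point at $0$, and since for the holomorphic restriction $f|_V$ the vanishing of $\nabla({\rm Re}\,f|_V)$ is equivalent to $f|_V$ having a critical point, $v$ --- hence $\widetilde v$ --- is nowhere zero on $\nu^{-1}\big((X\cap B_\epsilon)\setminus\{0\}\big)$. Therefore both ${\rm Eu}_X(0)$ and ${\rm Eu}_{f,X}(0)$ are classes in $H^{2d}\big(\nu^{-1}(X\cap B_\epsilon),\,\nu^{-1}(X\cap S_\epsilon)\big)$ evaluated on the fundamental cycle, and each may be computed as the sum of the local Nash indices of \emph{any} stratified vector field with isolated zeros having the prescribed behaviour on $\nu^{-1}(X\cap S_\epsilon)$ (relative Poincar\'e--Hopf for the Nash bundle).

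The key step is that ${\rm Eu}_X(0)-{\rm Eu}_{f,X}(0)$ equals, up to a universal sign, the obstruction attached over $\nu^{-1}(X\cap S_\epsilon)\times[0,1]$ to the straight-line homotopy $w_t=(1-t)\,\widetilde v+t\,\widetilde\zeta$ between the two boundary sections, hence the sum of the Nash indices of the (isolated) zeros of $w_t$. A zero of $w_t$ occurs over a point $p\in X\cap S_\epsilon$ at which $\nabla({\rm Re}\,f)$ has no outward radial component in the stratified sense, i.e. over a stratified critical point of ${\rm Re}(f)|_{X\cap S_\epsilon}$ of suitable index type; by Milnor's local picture in its stratified form these are exactly the critical points controlling how the Milnor fibre $F_\delta:=X\cap B_\epsilon\cap f^{-1}(\delta)$, $0<|\delta|\ll\epsilon$, is attached inside $X\cap B_\epsilon$.

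It remains to evaluate the sum. After a Morsification we may assume every zero of $w_t$ is non-degenerate; each lies on a stratum $V$, and by the product structure of complex stratified Morse data \cite{GMP} its local Nash index equals, up to the sign $(-1)^{\dim V}$, a tangential Morse contribution times the normal Morse index $\eta(V,{\rm Eu}_X)$. Summing over all strata and using the linearity of $\chi(-,{\rm Eu}_X)$ together with the identity $\eta(V',{\rm Eu}_{\overline{V}})=1$ if $V'=V$ and $0$ otherwise (recalled above), the alternating sum collapses to $\chi\big(X\cap B_\epsilon\cap f^{-1}(\delta),{\rm Eu}_X\big)$; the overall sign and normalisation are pinned down by the special case $f=\ell$ a generic linear form, in which the answer must be ${\rm Eu}_X(0)$ by the Brasselet--L\^e--Seade formula (Theorem \ref{BLS}). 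Combining, ${\rm Eu}_X(0)-{\rm Eu}_{f,X}(0)=\chi\big(X\cap B_\epsilon\cap f^{-1}(\delta),{\rm Eu}_X\big)$.

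The main obstacle is precisely this last evaluation: showing that the ${\rm Eu}_X$-weighted count of the zeros of $w_t$ is $\chi(F_\delta,{\rm Eu}_X)$, with the correct sign. The difficulty is that $\chi(-,{\rm Eu}_X)$ is not a homotopy invariant, so one cannot simply retract the fibre away; one has to run the stratified Morse theory of ${\rm Re}(f)$ on $X\cap B_\epsilon$ explicitly, control the local Nash indices along the \emph{singular} strata through the complex links and the normal Morse data, and track the signs $(-1)^{\dim V}$ carefully. The remaining ingredients --- the relative Poincar\'e--Hopf theorem for the Nash bundle, the transversality arrangement making $w_t$ non-degenerate with its zeros distributed as described, and the linearity of the Euler integral --- are routine given the material recalled in Section 2.
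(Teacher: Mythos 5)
This theorem is stated in the paper as a quoted result of Brasselet--Massey--Parameswaran--Seade \cite{BMPS}; the paper contains no proof of its own, so your attempt can only be measured against the argument in \cite{BMPS}. Your general strategy --- realise both invariants as obstruction classes on the Nash modification and compute their difference by comparing the radial field with the lift of the (conjugate) gradient of $f$ --- is the right framework and close in spirit to the original proof.

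There is, however, a genuine gap, and you have located it yourself: the evaluation of the weighted count of zeros of the interpolating field as $\chi\big(X\cap B_\epsilon\cap f^{-1}(\delta),{\rm Eu}_X\big)$ is asserted rather than proved, and the local index formula you offer for it is not the correct one. The contribution of an isolated zero $p\in V$ of a stratified vector field $v$ to the obstruction class on the Nash bundle is governed by the Brasselet--Schwartz \emph{proportionality theorem}: it equals $I(v|_V,p)\cdot{\rm Eu}_X(p)$, where $I(v|_V,p)$ is the Poincar\'e--Hopf index of the tangential part --- the weight is the \emph{value} ${\rm Eu}_X(p)$ of the constructible function, not the normal Morse index $\eta(V,{\rm Eu}_X)$. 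The quantity $\eta(V,{\rm Eu}_X)$ (which vanishes unless $V=X_{\rm reg}$) is what appears when one computes $\chi(\,\cdot\,,{\rm Eu}_X)$ by stratified Morse theory of a \emph{function}; conflating the two is precisely where your argument would break if carried out literally. With the proportionality theorem in hand, the route of \cite{BMPS} is to build a stratified field on $X\cap B_\epsilon$ that is radial on $S_\epsilon$, agrees with $\overline{\nabla}f$ near $f^{-1}(0)$ (so that the obstruction concentrated there is ${\rm Eu}_{f,X}(0)$ by definition), and whose remaining zeros lie on the Milnor fibre $F_\delta$, where stratum-by-stratum Poincar\'e--Hopf yields $\sum_\alpha\chi(V_\alpha\cap F_\delta)\,{\rm Eu}_X(V_\alpha)=\chi(F_\delta,{\rm Eu}_X)$. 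Two further points: computing the difference class on $\nu^{-1}(X\cap S_\epsilon)\times[0,1]$ localises everything on the sphere, whereas $\chi(F_\delta,{\rm Eu}_X)$ integrates over the interior of the fibre, so the zeros must actually be moved there; and the ``fix the sign by testing on a generic linear form'' step is only legitimate once one already knows the answer is a universal constant times $\chi(F_\delta,{\rm Eu}_X)$, which is essentially the statement to be proved.
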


In \cite{SeadeTibarVerjovsky1}, J. Seade, Tib\u{a}r and Verjovsky show that the Euler obstruction of $f$ is closely
related to the number of Morse points of a Morsefication of $f$, as it is stated in the next proposition.

\begin{proposition}[\cite{SeadeTibarVerjovsky1}]\label{Prop-n-reg} Let $f:X \to  \mathbb{C} $ be
the  an analytic function with isolated singularity at the
origin.
Then: $${\rm Eu}_{f,X}(0)=(-1)^d n_{\rm reg},$$where
$n_{\rm reg}$ is the number of Morse points on $X_{\rm reg}$ in a stratified
Morsefication of $f$ lying in a small neighborhood of $0$.
\end{proposition}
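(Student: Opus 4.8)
The plan is to combine the two index formulas recalled above (Theorems~\ref{BLS} and~\ref{BMPS}) with complex stratified Morse theory applied to a linear Morsefication, exploiting that the Euler obstruction is, up to the choice of top stratum, the constructible function characterised by the normal Morse index identities $\eta(V',{\rm Eu}_{\overline V})=\delta_{VV'}$. Concretely, I would reduce the statement to a comparison between the Milnor fibre of $f$ at $0$ and the Milnor fibre at $0$ of a generic linear form, and then evaluate that comparison critical point by critical point of a Morsefication.

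\emph{Setting up the Morsefication.} Fix a generic linear form $\ell$ and put $\tilde f = f-t\ell$ with $0<|t|\ll \epsilon \ll 1$. Since $\ell$ is general with respect to $\mathcal V$ on $X\cap B_\epsilon(0)$, the function $\tilde f$ is a stratified Morse function there; because $f$ has an isolated singularity at $0$, all its critical points $p_1,\dots,p_k$ in $B_\epsilon(0)$ tend to $0$ as $t\to 0$, their critical values may be assumed pairwise distinct (after a further generic perturbation if needed), and exactly $n_{\rm reg}$ of them lie on $X_{\rm reg}$ — this $n_{\rm reg}$ is, by definition, the one attached to this Morsefication, and the final formula will show a posteriori that it does not depend on the choice. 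One also arranges the fibres of $\tilde f$ and of $\ell$ near $0$ to be stratified‑transverse to $\partial B_\epsilon(0)$; then $F_f:=X\cap B_\epsilon(0)\cap f^{-1}(\delta)$ is homeomorphic to the fibre of $\tilde f$ over a regular value lying outside the disc enclosing the $\tilde f(p_i)$ but still inside the Milnor radius.

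\emph{Reduction and the Morse‑theoretic count.} By Theorem~\ref{BMPS}, ${\rm Eu}_{f,X}(0)={\rm Eu}_X(0)-\chi(F_f,{\rm Eu}_X)$, and by Theorem~\ref{BLS} applied to $\ell$, ${\rm Eu}_X(0)=\chi(F_\ell,{\rm Eu}_X)$ with $F_\ell=X\cap B_\epsilon(0)\cap \ell^{-1}(\delta)$; hence it suffices to prove
\[
\chi(F_\ell,{\rm Eu}_X)-\chi(F_f,{\rm Eu}_X)=(-1)^d\,n_{\rm reg}.
\]
Here I would invoke the local structure theorem of complex stratified Morse theory \cite{GMP}: letting the Morsefication cross the critical point $p_i$, which lies on a stratum $V_i$ of complex dimension $s_i$, modifies the fibre, up to homotopy, by attaching the local Morse data at $p_i$, namely the product of the tangential Morse datum of a holomorphic Morse function on an $s_i$‑dimensional complex stratum — a relative cell of real dimension $s_i$, contributing $(-1)^{s_i}$ to the relative Euler characteristic — with the normal Morse datum ${\rm NMD}(V_i)$. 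Taking $\chi(\,\cdot\,,{\rm Eu}_X)$ and using multiplicativity, the total contribution of $p_i$ to the displayed difference is $(-1)^{s_i}\eta(V_i,{\rm Eu}_X)$. Since ${\rm Eu}_X$ is the sum of the ${\rm Eu}_{\overline{V}}$ over the $d$‑dimensional strata $V$, the identities recalled above yield $\eta(V',{\rm Eu}_X)=1$ if $\dim_{\mathbb C}V'=d$ and $0$ otherwise; thus only the $n_{\rm reg}$ critical points on $X_{\rm reg}$ contribute, each by $(-1)^d$. This gives the displayed equality up to one overall sign, which is pinned down to $+1$ by the model $X=\mathbb C^d$, $f=z_1^2+\cdots+z_d^2$, for which ${\rm Eu}_{f,X}(0)=(-1)^d$ and $n_{\rm reg}=1$.

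The main obstacle is the bookkeeping of the previous paragraph: making rigorous, in the constructible‑function setting, that the difference between the Milnor fibre of $f$ and that of a generic linear form is accounted for exactly by the local Morse data at the $p_i$, with correct signs and with the behaviour along $\partial B_\epsilon(0)$ kept under control. This is where complex stratified Morse theory, the transversality of the $\tilde f$‑ and $\ell$‑fibres to the boundary sphere, and Viro's additivity and Fubini calculus for $\chi_c$ have to be used with care. An alternative that avoids the fibrewise comparison is to identify ${\rm Eu}_{f,X}(0)$ with the localized intersection number over $0$ of the graph of $df$ with the characteristic cycle of ${\rm Eu}_X$ — the conormal variety of $X_{\rm reg}$ — and to observe that the linear Morsefication splits this intersection into $n_{\rm reg}$ transverse points lying over $X_{\rm reg}$, each carrying the complex sign $(-1)^d$; this route, however, requires the Nash‑blow‑up / characteristic‑cycle description of the Euler obstruction of a function, which is not among the tools recalled in this excerpt.
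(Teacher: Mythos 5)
The paper does not prove this proposition: it is recalled verbatim from \cite{SeadeTibarVerjovsky1} and used as a black box (its only echo in the text is the remark after Corollary \ref{TopologyXgenericfibreCons} identifying $(-1)^{d_i}n_{ij}$ with ${\rm Eu}_{f,\overline{V_i}}(q_j)$). So there is no in-paper argument to compare against; I can only assess your proposal on its own terms, and it is essentially the standard proof: reduce via Theorems \ref{BLS} and \ref{BMPS}, Morsefy by a generic linear form, count local Morse data weighted by $(-1)^{s_i}\eta(V_i,{\rm Eu}_X)$, and kill every contribution except the top stratum using $\eta(V',{\rm Eu}_{\overline V})=\delta_{VV'}$. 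The sign bookkeeping ($(-1)^{s_i}$ from the tangential datum of a holomorphic Morse singularity on a complex $s_i$-dimensional stratum) and the final check on $X=\mathbb{C}^d$ are both correct.

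The one step you should tighten is the bridge you yourself flag as ``the main obstacle'': as written, you assert that the difference $\chi(F_\ell,{\rm Eu}_X)-\chi(F_f,{\rm Eu}_X)$ is accounted for by crossing the critical values of $\tilde f$, but $F_\ell$ is a fibre of $\ell$, not of $\tilde f$, so this is not literally a statement of stratified Morse theory. The clean fix is to replace $\chi(F_\ell,{\rm Eu}_X)$ by $\chi(X\cap B_\epsilon(0),{\rm Eu}_X)$: by the local conic structure each stratum $V\neq\{0\}$ meets $B_\epsilon(0)$ in a set of Euler characteristic zero, so $\chi(X\cap B_\epsilon(0),{\rm Eu}_X)={\rm Eu}_X(0)$, and Theorem \ref{BLS} becomes unnecessary. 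Then the exact statement you need is
$$\chi\bigl(X\cap B_\epsilon(0)\cap\tilde f^{-1}(D),{\rm Eu}_X\bigr)-\chi\bigl(X\cap B_\epsilon(0)\cap\tilde f^{-1}(\delta),{\rm Eu}_X\bigr)=\sum_i(-1)^{s_i}\eta(V_{(i)},{\rm Eu}_X),$$
for $D$ a small disc containing all critical values of $\tilde f$ and $\delta\in\partial D$ generic; this is precisely the constructible-function version of Theorem 3.2 of \cite{MasseyTopology96} (the same formula the paper invokes as $(*)$ in the proof of Theorem \ref{TopologyXgenericfibre}), together with the identifications $\tilde f^{-1}(D)\cap X\cap B_\epsilon(0)\simeq X\cap B_\epsilon(0)$ and $\tilde f^{-1}(\delta)\cap X\cap B_\epsilon(0)\cong F_f$. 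With that substitution your argument closes, and the overall sign no longer needs to be pinned down a posteriori.
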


\begin{definition}\label{DefGoodStrat}
{\rm A good stratification of $X$ relative to $f$ is a stratification $\mathcal{V}$ of $X$ which is adapted to $X^f$, (i.e. $X^f$ is a union of strata) , where $X^f= X \cap f^{-1}(0)$, such that $ \{V_{i}\in \mathcal{V}; V_{i} \not\subset X^f\} $ is a Whitney stratification of $X\setminus X^f$ and such that for any pair of strata $(V_{a},V_{b})$ such that $V_{a} \not\subset X^f$ and $V_{b} \subset X^f$, the $(a_f)$-Thom condition is satisfied.}
\end{definition}

Let us now recall the definition of the Brasselet number, defined in \cite{DutertreGrulhaAdv}.
\begin{definition}\label{DefBrassNumb}
{\rm Let $\mathcal{V}$ be a good stratification of $X$ relative to $f$.
We define ${\rm B}_{f,X}(0)$ by:
$${\rm B}_{f,X}(0)= \chi \big(X \cap B_{\epsilon}(0) \cap f^{-1}(\delta),{\rm Eu}_{X} \big),$$ where $0 < \vert \delta \vert \ll \epsilon \ll 1$.}
\end{definition}

\begin{remark}
{\rm Note that if $f$ has a stratified isolated singularity at the origin then, by Theorem \ref{BMPS}, we have that ${\rm B}_{f,X}(0)= {\rm Eu}_{X}(0) - {\rm Eu}_{f,X}(0)$.}
\end{remark}

\subsection{Global Euler obstruction}
Here we assume that $X$ is equidimensional and we write $\mathcal{V}=\{V_1,\ldots,V_t\}$. In \cite{SeadeTibarVerjovsky2}, Seade, Tib\u{a}r and Verjovsky introduced  a global analogous of the Euler obstruction called the global Euler obstruction and denoted by ${\rm Eu}(X)$.  Let $\widetilde{X} \overset{\nu}{\to} X$ denote the Nash modification of $X$, and let us consider a stratified real vector field $v$ on a subset $V \subset X$: this means that the vector field is continuous and tangent to the strata. The restriction of $v$ to $V$ has a well-defined canonical lifting $\widetilde{v}$ to $\nu^{-1}(V)$ as a section of the real bundle underlying the Nash bundle $\widetilde{T} \to \widetilde{X}$.

\begin{definition}\label{GlobalEulerObstruction}
{\rm We say that the stratified vector field $v$ on $X$ is radial-at-infinity if it is defined on the restriction to $X$ of the complement of a sufficiently large ball $B_{M}$ centered at the origin of $\mathbb{C}^{N}$, and 
it is transversal to $S_{R}$, pointing outwards, for any $R > M$. In particular, $v$ is without zeros on $X\setminus B_{M}$.}
\end{definition}

The ``sufficiently large'' radius $M$ is furnished by the following well-known result.

\begin{lemma}
There exists $M \in \mathbb{R}$ such that, for any $R \ge M$, the sphere $S_{R}$ centered at the origin of $\mathbb{C}^{N}$ and of radius $R$ is stratified transversal to $X$, i.e. transversal to all strata of the stratification $\mathcal{V}$.
\end{lemma}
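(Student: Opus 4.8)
The plan is to prove the existence of the radius $M$ by a standard transversality/finiteness argument using the algebraicity of $X$ and the finiteness of the Whitney stratification $\mathcal{V}$. First I would fix the squared-distance function $\rho : \mathbb{C}^N \to \mathbb{R}$, $\rho(z) = \|z\|^2$, and observe that the sphere $S_R$ is stratified transversal to $X$ precisely when $R^2$ is not a critical value of the restriction $\rho_{|V}$ for any stratum $V \in \mathcal{V}$ (with the convention that $0$-dimensional strata contribute their single value, and strata contained in $S_R$ must be checked as well, but a stratum cannot be a subset of infinitely many distinct spheres). So it suffices to show that the union over all strata of the set of critical values of $\rho_{|V}$ is a bounded subset of $\mathbb{R}$, hence contained in $[0, M^2)$ for some $M$.

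For each stratum $V_i$, the set $\Sigma_i = \{ x \in V_i : x \text{ is a critical point of } \rho_{|V_i}\}$ is a semialgebraic (indeed constructible) set, because $V_i$ is a smooth semialgebraic manifold and the criticality condition is semialgebraic; more precisely, one can realize $\Sigma_i$ via the conormal/Lagrangian description, namely $x \in \Sigma_i$ iff the covector $D\rho(x) = 2\langle x, \cdot\rangle$ annihilates $T_x V_i$. The key step is then to invoke a curve-selection / Łojasiewicz-type argument at infinity: if $\rho_{|V_i}$ had critical values of arbitrarily large modulus, then by the semialgebraic version of the Milnor–Łojasiewicz lemma (boundedness of the set of critical values of a proper semialgebraic function near infinity, or equivalently the existence of finitely many ``critical values at infinity''), one would obtain a real analytic curve $\gamma(t)$ in $\Sigma_i$ with $\|\gamma(t)\| \to \infty$; but along such a curve the Kuo–Verdier or the simple gradient estimate forces $\frac{d}{dt}\rho(\gamma(t)) = \langle \nabla\rho(\gamma(t)), \gamma'(t)\rangle = 0$ while simultaneously $\rho(\gamma(t)) \to \infty$ with controlled speed, contradicting the semialgebraic growth bounds. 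In fact the cleanest route is to cite directly that a semialgebraic set has only finitely many critical values at infinity (a consequence of Hardt's theorem or of the existence of a semialgebraic Whitney stratification of the map $\rho_{|V_i}$), so that the set of critical values of $\rho_{|V_i}$, being semialgebraic in $\mathbb{R}$ and having a finite set of limit points in $\overline{\mathbb{R}}$, is bounded.

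Having established that each $\rho_{|V_i}$ has bounded critical-value set, say contained in $[0, M_i^2)$, I set $M = \max_{i=1,\dots,t} M_i$ (using crucially that $\mathcal{V}$ is a \emph{finite} stratification). Then for any $R \ge M$, $R^2$ exceeds every critical value of every $\rho_{|V_i}$, so $S_R$ meets each stratum transversally; since transversality to all strata is exactly stratified transversality, the sphere $S_R$ is stratified transversal to $X$. I would also note for completeness that, by the isotopy lemma, the diffeomorphism type of $X \cap B_R$ and of $X \cap S_R$ is then independent of $R \ge M$, which is implicitly what makes the notion ``radial-at-infinity'' well-posed, though this is not strictly needed for the statement as written.

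The main obstacle is the passage from ``each stratum individually has bounded critical values'' — which is the genuinely analytic input — to the uniform bound, and here there is nothing to do beyond taking a maximum once finiteness of the stratification is in hand; so the real content is the first part, and the cleanest way to discharge it is to reduce to the classical fact about critical values at infinity of semialgebraic functions rather than to reprove it. An alternative, perhaps more self-contained, approach would be to use the fact that $X$ itself is a semialgebraic set and apply the Milnor-type argument directly to $\rho_{|X}$ together with the conical structure of $X$ at infinity (the existence of $M$ such that $X$ is homeomorphic to the cone over $\mathrm{Lk}^\infty(X) = X \cap S_R$ for $R \ge M$), from which stratified transversality follows because the conical structure is realized by a stratified isotopy; but this essentially repackages the same Łojasiewicz estimate, so I would present the stratum-by-stratum version as the primary argument.
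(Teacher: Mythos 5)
Your argument is correct, and it is the standard proof of this fact; the paper itself states the lemma as ``well-known'' and gives no proof at all, so there is nothing to compare it against beyond noting that your stratum-by-stratum reduction (transversality of $S_R$ to $V_i$ at $x$ fails iff $x$ is a critical point of $\rho_{|V_i}$, then bound the critical values on each of the finitely many strata and take the maximum) is exactly the expected route. The only remark worth making is that your middle step can be discharged more cleanly than by appealing to Kuo--Verdier or critical values at infinity: the critical locus $\Sigma_i$ is semialgebraic, and along any differentiable arc in $\Sigma_i$ one has $\frac{d}{dt}\rho(\gamma(t))=D\rho(\gamma(t))(\gamma'(t))=0$ since $\gamma'(t)\in T_{\gamma(t)}V_i$, so $\rho$ is constant on such arcs and curve selection at infinity shows $\Sigma_i$ is bounded; alternatively, the critical value set is a semialgebraic subset of $\mathbb{R}$ of measure zero by Sard, hence finite, hence bounded.
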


Using this last lemma and inspired by \cite{BS} and \cite{Db1}, Seade, Tib\u{a}r and Verjovsky defined the global Euler obstruction in \cite{SeadeTibarVerjovsky2} as follows:

\begin{definition}
{\rm Let $\tilde{v}$ be the lifting to a section of the Nash bundle $\tilde{T}$ of a radial-at-infinity stratified vector field $v$ over $X\setminus B_{R}$. We call global Euler obstruction of $X$, and denote it by ${\rm Eu}(X)$, the obstruction for extending $\tilde{v}$ as a nowhere zero section of $\widetilde{T}$ within $\nu^{-1}(X\cap B_{R})$.}
\end{definition}

To be precise, the obstruction to extend $\tilde{v}$ as a nowhere zero section of $\widetilde{T}$ within $\nu^{-1}(X\cap B_{R})$ is in fact a relative cohomology class
$$o(\tilde{v}) \in H^{2d}(\nu^{-1}(X\cap B_{R}), \nu^{-1}(X\cap S_{R})) \simeq H_{c}^{2d}(\widetilde{X}).$$

The global Euler obstruction of $X$ is the evaluation of $o(\tilde{v})$ on the fundamental class of the pair $(\nu^{-1}(X\cap B_{R}), \nu^{-1}(X\cap S_{R}))$. Thus ${\rm Eu}(X)$ is an integer and does not depend on the radius of the sphere defining the link at infinity of $X$. Since two radial-at-infinity vector fields are homotopic as stratified vector fields, it does not depend on the choice of $v$ either.

\begin{remark}\label{Remarque}
{\rm The global Euler obstruction has the following properties (see \cite{SeadeTibarVerjovsky2} p. 396):

\begin{enumerate}

\item{if $X$ is non-singular, then ${\rm Eu}(X)=\chi(X)$,}


\item{${\rm Eu}(X)= \chi(X,{\rm Eu}_X)$.}
\end{enumerate}}
\end{remark}

\section{Regularity conditions at infinity}

A natural question is if the  concepts of the Euler obstruction and the Brasselet number of a function could be extended to the global setting, as Seade, Tib\u{a}r and Verjovsky did for the local Euler obstruction, and what kind of information we could obtain with these possible new global invariants.

But, before extending the local notions of the Euler obstruction and Brasselet number of a function, we recall in this section some definitions and results about the study of singularities at infinity and we adapted some results to the stratified setting. The main references for this section are \cite{Dias,DRT,Tibar} and we refer to these papers for details. 

We consider $X \subset \mathbb{C}^{N}$ a reduced algebraic set of dimension $d$. We use coordinates $(x_{1},\dots,x_{N})$ for the space $\mathbb{C}^{N}$ and coordinates $[x_{0}:x_{1}:\dots:x_{N}]$ for the projective space $\mathbb{P}^{N}$. We consider the algebraic closure $\overline{X}$ of $X$ in the complex projective space $\mathbb{P}^{N}$ and we denote by $$H^{\infty}=\big\{[x_{0}:x_{1}:\dots:x_{N}] \mid x_{0}=0\big\},$$  the hyperplane at infinity of the embedding $\mathbb{C}^{N} \subset \mathbb{P}^{N}$.

One may endow $\overline{X}$ with a semi-algebraic Whitney stratification $\mathcal{W}$ such that $X_{\rm reg}$ is a stratum and the part at infinity $\overline{X}\cap H^{\infty}$ is a union of strata. 

Since $\overline{X}$ is projective and since the stratification of $\overline{X}$ is locally finite, it follows that $\mathcal{W}$ has finitely many strata.  We denote by $X_{\rm sing}$ the set of singular points of $X$, i.e. $X_{\rm sing}=X \setminus X_{\rm reg}$.

In order to recall the definition of the $t$-regularity, let us recall first the definition of the conormal spaces.

\begin{definition}\label{Conormal} {\rm  We denote by $C(X)$ the conormal modification of $X$, defined as: 
$$C(X)= {\rm closure}
\big\{(x,H) \in X_{\rm reg} \times \check{\mathbb{P}}^{N-1}\ | \  T_{x}X_{\rm reg} \subset H \big\} \subset \overline{X}\times \check{\mathbb{P}}^{N-1}.$$Let $\pi: C(X) \to \overline{X}$ denote the projection $\pi(x,H)=x$.}
\end{definition}

\begin{definition}\label{RelativeConormal} {\rm Let $g: X \to \mathbb{C}$ be an analytic function defined in some neighbourhood of $X$ in $\mathbb{C}^{N}$. Let $X_{0}$ denote the subset of $X_{\rm reg}$ where $g$ is a submersion. The relative conormal space of $g$ is defined as follows:
$$C_{g}(X)= {\rm closure} \Big \{(x,H) \in X_{0} \times \check{\mathbb{P}}^{N-1} \ | \ T_{x} g^{-1}\left((g(x)\right)  \subset H \big\} \subset \overline{X}\times \check{\mathbb{P}}^{N-1},$$together with the projection $\pi: C_{g}(X) \to \overline{X}$, $\pi(x,H)=x$.}
\end{definition}

Let $f: X \to \mathbb{C}$ be a function  such that $f= F_{\mid X}$, where $F: \mathbb{C}^{N} \to \mathbb{C}$ is a polynomial function. 

Let $\mathbb{X}=\overline{{\rm graph} f}$ be the closure of the graph of $f$ in $\mathbb{P}^{N} \times \mathbb{C}$ and let $\mathbb{X}^{\infty}= \mathbb{X} \cap (H^{\infty}\times \mathbb{C})$. One has the isomorphism ${\rm graph} f \simeq X$.

We consider the affine charts $U_{j} \times \mathbb{C}$ of $\mathbb{P}^{N}\times \mathbb{C}$, where $$U_{j}=\{[x_{0}: \dots : x_{N}] \mid x_{j}\neq 0 \}, j = 0,1,\dots,N.$$ Identifying the chart $U_{0}$ with the affine space $\mathbb{C}^{N}$, we have $\mathbb{X}\cap (U_{0}\times \mathbb{C})= \mathbb{X} \setminus \mathbb{X}^{\infty}= {\rm graph} f$, and $\mathbb{X}^{\infty}$ is covered by the charts $U_{1}\times \mathbb{C},..., U_{N}\times \mathbb{C}$.

If $g$ denotes the projection to the variable $x_{0}$ in some affine chart $U_{j} \times \mathbb{C}$, then the relative conormal $C_{g}(\mathbb{X}\setminus \mathbb{X}^{\infty} \cap U_{j} \times \mathbb{C}) \subset \mathbb{X} \times \check{\mathbb{P}}^{N}$ is well defined. 

With the projection $\pi (y, H) = y$, let us then consider the space $\pi^{-1}(\mathbb{X}^{\infty})$, which is well defined for every chart $U_{j} \times \mathbb{C}$ as a subset of $C_{g}(\mathbb{X}\setminus \mathbb{X}^{\infty} \cap U_{j} \times \mathbb{C}).$

\begin{definition}\label{CovectorsInfinity} {\rm We call space of characteristic covectors at infinity the set $C^{\infty}=\pi^{-1}(\mathbb{X}^{\infty})$. For some $p_{0} \in \mathbb{X}^{\infty}$, we denote $C^{\infty}_{p_{0}}:= \pi^{-1}(p_{0})$.}
\end{definition}
By Lemma 2.8 in \cite{Tibar}, these notions are well-defined, i.e. they do not depend on the chart $U_j$. 

Let $\tau: \mathbb{P}^{N}\times \mathbb{C} \to \mathbb{C}$ denote the second projection. One defines the relative conormal space $C_{\tau}(\mathbb{P}^{N}\times \mathbb{C})$ as in  Definition \ref{RelativeConormal} where the function $g$ is replaced by the mapping $\tau$.

\begin{definition} {\rm 
We say that $f$ is $t$-regular at $p_{0} \in \mathbb{X}^{\infty}$ if $$C_{\tau}(\mathbb{P}^{N}\times \mathbb{C}) \cap C_{p_{0}}^{\infty} = \emptyset.$$
We say that $f^{-1}(t_0)$ is $t$-regular if $f$ is $t$-regular at all points $p_{0}\in \mathbb{X}^{\infty} \cap \tau^{-1}(t_{0}) $. }
\end{definition}


Let us now recall the definition of $\rho$-regularity. 
Let $K \subset \mathbb{C}^{N}$ be some compact (eventually empty) set and let $\rho: \mathbb{C}^{N}\setminus K \to \mathbb{R}_{\geq 0}$ be a proper analytic submersion. 

\begin{definition}[$\rho$-regularity at infinity] {\rm We say that $f$ is $\rho$-regular at $p_{0}\in \mathbb{X}^{\infty}$ if  there is an open neighbourhood
 $U \subset \mathbb{P}^{N}\times \mathbb{C}$ of $p_{0}$ and an open neighbourhood 
 $D \subset \mathbb{C}$ of $\tau(p_{0})$ such that, for all $t \in D$, the fibre $f^{-1}(t) \cap X_{\rm reg} \cap U$ intersects all the levels of the restriction $\rho_{\mid U \cap X_{\rm reg}}$ and this intersection is transversal.

We say that the fibre $f^{-1}(t_{0})$ is $\rho$-regular at infinity if $f$ is $\rho$-regular at all points $p_{0}\in \mathbb{X}^{\infty} \cap \tau^{-1}(t_{0}) $. 
We say that $t_0$ is an asymptotic $\rho$-non-regular value if $f^{-1}(t_0)$ is not $\rho$-regular at infinity. }
\end{definition}

The next proposition relates $t$-regularity to $\rho_{E}$-regularity, where $\rho_E$ denotes the Euclidian norm.

\begin{proposition}
If $f$ is $t$-regular at $p_{0} \in \mathbb{X}^{\infty}$, then $f$ is $\rho_{E}$-regular at $p_{0}$.
\end{proposition}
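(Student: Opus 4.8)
\emph{Proof proposal.} The plan is to argue by contradiction: use the curve selection lemma to convert the failure of $\rho_E$-regularity into an analytic arc tending to $p_0$, then run a limiting argument on the relative conormal that produces a characteristic covector at infinity lying in $C_\tau(\mathbb{P}^N\times\mathbb{C})$, contradicting $t$-regularity. So assume $f$ is not $\rho_E$-regular at $p_0$. Fix an affine chart $U_j\times\mathbb{C}$ at infinity containing $p_0$, in which $g$ denotes the coordinate defining $\mathbb{X}^\infty$; this is harmless by the chart-independence of the notions involved (Lemma 2.8 of \cite{Tibar}). By definition, arbitrarily close to $p_0$ there are points $x\in X_{\rm reg}$ and values $t$ near $\tau(p_0)$ with $f(x)=t$ at which $f^{-1}(t)\cap X_{\rm reg}$ fails to be transversal to the level of $\rho_E$ through $x$. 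The set of such pairs $(x,t)$ is semi-algebraic in the chart and has $p_0$ in its closure, so the curve selection lemma yields a real-analytic arc $s\mapsto x(s)\in X_{\rm reg}$, $s\in[0,\varepsilon)$, with $(x(s),f(x(s)))\to p_0$ and, for $s>0$, the fibre $f^{-1}(f(x(s)))\cap X_{\rm reg}$ tangent to $\rho_E^{-1}(\rho_E(x(s)))$ at $x(s)$. Shrinking $\varepsilon$, we may assume the arc avoids the critical locus of $f|_{X_{\rm reg}}$, the degenerate case being handled the same way with $df$ replaced by $0$.

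Next I would record the local equation of $\rho_E$ near infinity: in the chart one has $\rho_E = h/|g|^2$, where $h$ is real-analytic with $h\ge 1$ near $p_0$. Tangency of the fibre to the $\rho_E$-level at $x(s)$ means that $d\rho_E(x(s))$ kills $\ker df(x(s))\cap T_{x(s)}X_{\rm reg}$; equivalently, for a suitable $\lambda(s)\in\mathbb{C}$ the real covector $d\rho_E(x(s))-\mathrm{Re}\big(\lambda(s)\,df(x(s))\big)$ annihilates $T_{x(s)}X_{\rm reg}$. Writing $d\rho_E = |g|^{-2}\big(dh - h\,(dg/g+d\bar g/\bar g)\big)$ and using that $g(x(s))\to 0$ while $h$ and $dh$ stay bounded, the term in $dg/g+d\bar g/\bar g$ is the dominant one along the arc.

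The final step is the passage to the limit: clearing the factor $|g|^{-2}$ and normalising, one extracts from the covectors carrying the $g$-level data along $x(s)$ a subsequence converging, as $s\to 0$, to a hyperplane $H_0$, which by construction belongs to $C^\infty_{p_0}$. The crux is to identify $H_0$: the relation above, analysed order by order along the arc (Puiseux expansions of $x(s),\,g(x(s)),\,f(x(s)),\,\lambda(s)$ and comparison of their orders in $s$), should force the $dh$-term to disappear in the limit and prevent the $df$-term from contributing a $\tau$-vertical component, so that $H_0$ contains the whole $\mathbb{P}^N$-direction and no $\tau$-direction; that is, $H_0=\ker d\tau$, hence $H_0\in C_\tau(\mathbb{P}^N\times\mathbb{C})$. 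Then $H_0\in C_\tau(\mathbb{P}^N\times\mathbb{C})\cap C^\infty_{p_0}\ne\emptyset$, contradicting the $t$-regularity of $f$ at $p_0$, and the proposition follows.

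\textbf{Expected main obstacle.} The curve selection and the local computation $\rho_E=h/|g|^2$ are routine; the difficulty is concentrated in the last step, namely the order-by-order control of the limit showing that the surviving covector is exactly the $\tau$-horizontal one and that the gradient term $df$ along the arc cannot survive with a vertical component. This is the only place where a genuine Newton--Puiseux argument, together with care about the possibly singular point $p_0\in\mathbb{X}^\infty$, is needed.
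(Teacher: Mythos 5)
Your strategy is exactly the one the paper intends: its entire proof is the sentence ``this is an adaptation of the proof of Proposition 2.11 in Tib\u{a}r's paper,'' and that proof is precisely the contradiction argument you outline --- convert failure of $\rho_E$-regularity near $p_0$ into an arc of tangency points, pass to the chart where $\rho_E^2=h/|g|^2$ (note the square: $\rho_E^2$, not $\rho_E$, equals $h/|g|^2$, though this is harmless since the levels coincide), and show that the limiting conormal hyperplane is $\ker d\tau$, contradicting $t$-regularity. So in terms of approach there is nothing to object to, and your remark that the adaptation to singular $X$ is essentially cosmetic (both regularity conditions are phrased on $X_{\rm reg}$, resp.\ on the graph over $X_{\rm reg}$) is correct. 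The reservation is that the one step you defer --- ``the order-by-order control of the limit showing that the surviving covector is exactly the $\tau$-horizontal one'' --- is not a technical afterthought but the entire mathematical content of the proposition, and it is not automatic. Concretely: the tangency condition along the arc gives, on $T\,{\rm graph}(f)$, a relation of the form $dg-\tfrac{g}{h}\partial h=\mu(s)\,dt$, and the covector $\eta(s)=\mu(s)\,dt+\tfrac{g}{h}\partial h$ vanishes on $T\,{\rm graph}(f)\cap\ker dg$; since $\tfrac{g}{h}\partial h\to 0$, one gets $\ker d\tau\in C^\infty_{p_0}$ \emph{provided} $\mu(s)$ does not tend to $0$ faster than $|g|\,\|\partial h\|/h$. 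Ruling out that degenerate regime (so that the normalized limit does not pick up a component from $\partial h$ and kill the $dt$-direction) is exactly where the Newton--Puiseux comparison of orders, together with the boundedness of $f$ along the arc, must be invoked, and your proposal only asserts that this ``should'' work. As written, the proof is an accurate reconstruction of the paper's (i.e.\ Tib\u{a}r's) argument in outline, but the decisive estimate is declared rather than carried out.
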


\begin{proof}
This is just an adaptation to our setting of the proof of Proposition 2.11 in \cite{Tibar}. 
\end{proof}

\begin{corollary}
The set of asymptotic non-$\rho_E$-regular values of $f$ is finite.
\end{corollary}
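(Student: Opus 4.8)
The plan is to reduce the statement to the finiteness of the set of values $t_0$ at which $f^{-1}(t_0)$ fails to be $t$-regular, and then to extract that finiteness from the complex-algebraic nature of the spaces of characteristic covectors at infinity together with a properness argument.

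First I would invoke the preceding proposition in contrapositive form: if $f$ is not $\rho_E$-regular at a point $p_0\in\mathbb{X}^\infty$, then $f$ is not $t$-regular at $p_0$, i.e. $C_\tau(\mathbb{P}^N\times\mathbb{C})\cap C^\infty_{p_0}\neq\emptyset$. Hence every asymptotic non-$\rho_E$-regular value $t_0$ belongs to the set $\mathcal{A}:=\tilde\tau\big(C_\tau(\mathbb{P}^N\times\mathbb{C})\cap C^\infty\big)$, where $\tilde\tau\colon(\mathbb{P}^N\times\mathbb{C})\times\check{\mathbb{P}}^N\to\mathbb{C}$ is the map induced by the projection $\tau$; in fact $\mathcal{A}$ is precisely the set of $t_0$ for which $f^{-1}(t_0)$ is not $t$-regular. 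So it is enough to show that $\mathcal{A}$ is finite. Note that one really has to pass through $t$-regularity: the $\rho_E$-regularity condition involves the real Euclidean norm and real transversality, so it is only semi-algebraic, whereas $t$-regularity is a complex-algebraic condition.

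The key observation is that $Z:=C_\tau(\mathbb{P}^N\times\mathbb{C})\cap C^\infty$ is a complex algebraic subset of $(\mathbb{P}^N\times\mathbb{C})\times\check{\mathbb{P}}^N$: in each chart $U_j\times\mathbb{C}$ the relative conormal $C_g$ is the Zariski closure of an algebraic incidence locus, these chart pieces are compatible by Lemma 2.8 of \cite{Tibar}, $\mathbb{X}^\infty$ is Zariski closed in $\mathbb{X}$, so $C^\infty=\pi^{-1}(\mathbb{X}^\infty)$ is Zariski closed, and likewise $C_\tau(\mathbb{P}^N\times\mathbb{C})$ is Zariski closed. Since $\tilde\tau$ is a morphism, Chevalley's theorem shows that $\mathcal{A}=\tilde\tau(Z)$ is a constructible subset of $\mathbb{A}^1_\mathbb{C}=\mathbb{C}$, hence either finite or the complement of a finite set. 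On the other hand $Z$ is closed in $(\mathbb{P}^N\times\mathbb{C})\times\check{\mathbb{P}}^N$, and the projection of this space onto $\mathbb{C}$ is proper, its fibres $\mathbb{P}^N\times\check{\mathbb{P}}^N$ being compact; therefore $\tilde\tau|_Z$ is proper and $\mathcal{A}$ is closed in $\mathbb{C}$ for the Euclidean topology. A cofinite subset of $\mathbb{C}$ is Euclidean dense, so the only cofinite Euclidean-closed subset of $\mathbb{C}$ is $\mathbb{C}$ itself. Combining the two observations, $\mathcal{A}$ is finite provided $\mathcal{A}\neq\mathbb{C}$, i.e. provided $f$ has at least one $t$-regular fibre.

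By this reduction it only remains to exhibit a single value $t_0$ for which $f^{-1}(t_0)$ is $t$-regular. For this one equips $\overline{\mathbb{X}}$ with a Whitney stratification compatible with $\mathbb{X}^\infty$ and satisfying the Thom $(a_\tau)$ condition; the standard genericity of these conditions then yields that all but finitely many fibres $f^{-1}(t_0)$ are $t$-regular, exactly as in \cite{Tibar} and \cite{DRT}. In particular $\mathcal{A}\neq\mathbb{C}$, and the proof is complete. The genericity of $t$-regularity is the main obstacle here, as it is the only point that requires the Thom--Whitney stratification machinery; the preceding proposition then transports it to $\rho_E$-regularity, and the properness remark is what reduces what is actually needed to the mere existence of one good value.
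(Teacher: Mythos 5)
Your reduction to $t$-regularity, the identification of the set of non-$t$-regular values with $\tilde\tau\bigl(C_\tau(\mathbb{P}^N\times\mathbb{C})\cap C^\infty\bigr)$, and the Chevalley-plus-properness argument showing that this set is either finite or all of $\mathbb{C}$ are correct, and this wrapper is genuinely different from the paper's route. The difficulty is the last step. To rule out $\mathcal{A}=\mathbb{C}$ you must exhibit one $t$-regular fibre, and you produce it by asserting that ``all but finitely many fibres $f^{-1}(t_0)$ are $t$-regular'' --- which is exactly the statement under proof. As written the argument is therefore circular, or, read as a citation, it imports the whole corollary from \cite{Tibar,DRT} and renders your two preceding paragraphs redundant. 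There is also a small but real slip: the Thom condition needed is $(a_g)$ for $g$ a local equation of $\mathbb{X}^\infty$ (i.e.\ the coordinate $x_0$ in the charts $U_j\times\mathbb{C}$), not $(a_\tau)$; it is the $(a_g)$ condition that converts transversality of $\tau^{-1}(t_0)$ to the strata of $\mathbb{X}^\infty$ into the emptiness of $C_\tau(\mathbb{P}^N\times\mathbb{C})\cap C^\infty_{p_0}$.

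The honest way to close your gap is precisely the paper's argument: stratify $\mathbb{X}$ with $\mathbb{X}^\infty$ a union of strata and the Thom $(a_g)$ condition satisfied along $\mathbb{X}^\infty$; then every stratified regular value $t_0$ of $\tau_{\vert\mathbb{X}^\infty}$ yields a $t$-regular fibre, and such values exist because $\tau_{\vert\mathbb{X}^\infty}$ has only finitely many stratified critical values. But notice that this sentence already proves the corollary outright --- the non-$t$-regular values are contained in that finite set of critical values --- so once the missing ingredient is supplied correctly, the Chevalley/properness machinery buys nothing. Your architecture would only pay off if a single $t$-regular fibre could be produced by a softer argument than the Thom stratification, and you have not provided one.
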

\proof It is enough to prove that there are only finitely many values $t_0$ such that $f^{-1}(t_0)$ is not $t$-regular. The proof of this fact is as in Corollary 2.12 in \cite{Tibar}. We can equip 
$\mathbb{X}$ with a Whitney stratification such that $\mathbb{X}^\infty$ is a union of strata and such that any pair of strata $(V,W)$, with $V \subset \mathbb{X} \setminus \mathbb{X}^\infty$, $W \subset \mathbb{X}^\infty$ and $W \subset \overline{V} \setminus V$, 
satisfies the Thom $(a_g)$-regularity condition for some function $g$ defining $\mathbb{X}^\infty$ in $\mathbb{X}$. If $\tau^{-1}(t_0)$ is transversal to $\mathbb{X}^\infty$ in the stratified sense, then $f^{-1}(t_0)$ is $t$-regular. But the mapping $\tau_{\vert \mathbb{X}^\infty }: \mathbb{X}^\infty \to \mathbb{C}$ has a finite number of critical values (in the stratified sense).   \endproof

\begin{proposition}
Let   $l: X \to \mathbb{C}$ be a generic linear projection then for all $p_{0} \in \mathbb{X}^{\infty}$, $l$ is $t$-regular at $p_0$.
\end{proposition}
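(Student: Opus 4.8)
The plan is to show that for a generic linear projection $l$, the hyperplane at infinity $\mathbb{X}^\infty$ behaves as generically as possible with respect to both the conormal geometry and the projection $\tau$, so that the defining intersection $C_\tau(\mathbb{P}^N\times\mathbb{C})\cap C^\infty_{p_0}$ is empty at every point $p_0\in\mathbb{X}^\infty$. The key observation is that $t$-regularity at $p_0$ is an \emph{open} and \emph{generic} condition on the linear form: one wants to apply a Bertini–Sard type argument in the space of linear projections $l\in(\mathbb{C}^N)^*$, reducing the statement to the finiteness of a certain ``bad'' locus.

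First I would set up the incidence variety. Fix the Whitney stratification $\mathcal{W}$ of $\overline{X}$ adapted to $H^\infty$ as in Section 3, and recall from the proof of the previous corollary that one can equip $\mathbb{X}=\overline{{\rm graph}\,f}$ with a Whitney stratification in which $\mathbb{X}^\infty$ is a union of strata and the Thom $(a_g)$-condition holds along the pairs $(V,W)$ with $W\subset\mathbb{X}^\infty$. For $l$ linear, $\mathbb{X}=\overline{{\rm graph}\,l}$ depends algebraically on $l$, and likewise $\mathbb{X}^\infty=\overline{X}\cap H^\infty$ does \emph{not} depend on $l$ at all (the part at infinity of the graph of a linear form is just $(\overline{X}\cap H^\infty)\times\{0\}$ after the natural identification, since a linear form extends to a section vanishing appropriately on $H^\infty$). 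This is the structural simplification: the space of characteristic covectors at infinity $C^\infty$ is governed by the conormal geometry of $\overline{X}\cap H^\infty$ inside $\overline{X}$, which is intrinsic to $X$, while the condition defining $t$-regularity picks out those covectors which are also limits of conormals to the fibres $\tau=\text{const}$.

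Next I would translate $t$-regularity into a transversality statement: by the argument already used in the proof of the preceding corollary, if $\tau^{-1}(t_0)$ meets $\mathbb{X}^\infty$ transversally (in the stratified sense) then $f^{-1}(t_0)$ is $t$-regular, and more precisely $f$ is $t$-regular at any $p_0\in\mathbb{X}^\infty$ at which $\tau_{|\mathbb{X}^\infty}$ is a stratified submersion. For $f=l$ a linear form, $\tau_{|\mathbb{X}^\infty}$ is (up to the identification above) the constant map to a single value, or more carefully: the fibre structure of $l$ near infinity is controlled by the pencil of hyperplanes $\{l=c\}$, whose closures all pass through the fixed hyperplane $\overline{\{l=0\}}\cap H^\infty$. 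The point is that for generic $l$ this hyperplane at infinity is transverse to every stratum of $\mathcal{W}\cap H^\infty$ — a classical genericity statement, which follows from the fact that the set of hyperplanes of $\mathbb{P}^{N-1}=H^\infty$ failing to be transverse to a fixed finite stratification is a proper algebraic subset (it is swept out by the conormal varieties of the strata, each of dimension $\le N-2$). Combining this with the Thom $(a_g)$-condition forces the limits of conormals along $\mathbb{X}\setminus\mathbb{X}^\infty$ to land in the conormal directions of $\mathbb{X}^\infty$, which are then incompatible with $C_\tau$ precisely because of the transversality just arranged; hence $C_\tau(\mathbb{P}^N\times\mathbb{C})\cap C^\infty_{p_0}=\emptyset$ for all $p_0$.

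\textbf{The main obstacle} I expect is the bookkeeping at infinity: one must check that ``generic linear $l$'' can be chosen so that the genericity of $\overline{\{l=0\}}\cap H^\infty$ with respect to the stratification of $\overline{X}\cap H^\infty$ actually implies the emptiness of the conormal intersection defining $t$-regularity, rather than merely plain transversality of a fibre. This is where one needs the Thom $(a_g)$-regularity recalled in Section 3 to propagate transversality from the ``frontier'' strata at infinity to the limiting covectors coming from $\mathbb{X}\setminus\mathbb{X}^\infty$; writing this out carefully in the charts $U_j\times\mathbb{C}$ and verifying chart-independence via Lemma 2.8 of \cite{Tibar} is the technical heart. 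Everything else — the finiteness of the bad locus of linear forms, the reduction to transversality — is standard Bertini/Sard and parallels the proofs of Proposition 2.11 and Corollary 2.12 of \cite{Tibar}, so I would cite those and only spell out the adaptation to the stratified situation.
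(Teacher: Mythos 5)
Your proposal follows essentially the same route as the paper: both arguments reduce $t$-regularity of a generic linear $l$ to the transversality of the axis of the pencil $\overline{\{l=0\}}\cap H^{\infty}$ to $\overline{X}\cap H^{\infty}$ (the paper quotes Lemma 3.1 of \cite{SeadeTibarVerjovsky2} for this genericity), and both use a Whitney stratification of $\overline{X}$ satisfying the Thom $(a_g)$-condition along $H^{\infty}$ to pass from a failing characteristic covector --- necessarily the horizontal hyperplane $\mathbb{C}^N\times\{0\}$ --- to a failure of that transversality. One small slip: $\mathbb{X}^{\infty}$ for a linear form is \emph{not} $(\overline{X}\cap H^{\infty})\times\{0\}$ independently of $l$ (it is supported over $\overline{X}\cap H^{\infty}\cap\overline{\{l=0\}}$), but this does not affect your argument, which correctly pivots to the axis of the pencil.
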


 \begin{proof} 
With $U_{j}$ defined as before, let us work in the chart $U_{j} \times \mathbb{C}$ and with $C_{g}(\mathbb{X}\setminus \mathbb{X}^{\infty} \cap U_{j}\times \mathbb{C})$ with $\pi(x,H)=x$, as defined above. 

Let us suppose that $l$ is not $t$-regular at $p_{0}=(q_{0},t_{0}) \in \mathbb{X}^{\infty}$. It means that there exists a sequence $p_{n}=(q_{n},l(q_{n})) \to p_{0}$, with $q_{n} \in X_{\rm reg}$ such that 
$$T_{p_n} {\rm graph}(l) \cap T_{p_n}g^{-1}(g(p_n)) \subset H_{n},$$  and $ H_{n} \to H$ and a sequence of hyperplanes $\{L_{n}\}$ such that $ \mathbb{C}^{N} \times \{0\} \subset L_{n}$ and   $L_{n} \to H$,  where $(p_0,H) \in \mathcal{C}_{\tau}(\mathbb{P}^{N}\times \mathbb{C})\cap \mathbb{C}_{p_0}^{\infty}$.

Since in fact each $L_{n}= \mathbb{C}^{N}\times \{0\}$, we conclude that $H=\mathbb{C}^{N}\times \{0\}$. Note also that, $$T_{p_n} {\rm graph}(l) \cap T_{p_n}g^{-1}(g(p_n))$$ $$ =
{\rm graph} (l: T_{q_{n}} X_{\rm reg} \to \mathbb{C}) \cap T_{q_{n}} g^{-1}(g(q_{n}))\times \mathbb{C}.$$
This implies that $\lim\limits_{n\to +\infty} l(u_n)=0$ for any bounded sequence $(u_n)$ of vectors such that $u_n \in T_{q_{n}} X_{\rm reg}  \cap T_{q_{n}} g^{-1}(g(q_{n}))$. As in the previous corollary, we can equip $\overline{X}$ with a Whitney stratification such that
$\overline{X} \cap H^\infty$ is a union of strata and such that any pair of strata $(V,W)$, with $V \subset X $, $W \subset \overline{X} \cap H^\infty$ and $W \subset \overline{V} \setminus V$, satisfies the Thom $(a_g)$-regularity. Therefore we see that the axis of the pencil defined by $l$ is not transversal to $\overline{X}\cap H^\infty$. By Lemma 3.1 in \cite{SeadeTibarVerjovsky2}, this is not possible if $l$ is generic.
So we conclude that $l$ generic is $t$-regular.

\end{proof}

We assume now that $X$ is equipped with a finite Whitney stratification $\mathcal{V}= \{V_i\}_{i=1}^t$ such that $V_1,\ldots,V_{t-1}$ are connected, $\overline{V_1},\ldots,\overline{V_t}$ are reduced and $V_t=X_{\rm reg}$. For $i=\{1,\ldots,t\}$, let $f_i : \overline{V_i} \rightarrow \mathbb{C}$ be  the restriction to $\overline{V_i}$ of the polynomial function $F$. Note that $f_t=f$.

\begin{definition}[stratified $t$-regularity at infinity]
{\rm 
We say that $f$ is stratified $t$-regular at $p_{0} \in \mathbb{X}^{\infty}$ if 
for $i=1,\ldots,t$, $f_i$ is $t$-regular at $p_{0}$. 

We say that $f^{-1}(t_0)$ is stratified $t$-regular 
if $f$ is stratified $t$-regular at all points $p_{0}\in \mathbb{X}^{\infty} \cap \tau^{-1}(t_{0}) $. }
\end{definition}

\begin{definition}[stratified $\rho$-regularity at infinity] {\rm 
We say that $f$ is stratified $\rho$-regular at $p_{0}\in \mathbb{X}^{\infty}$ if  if 
for $i=1,\ldots,t$, $f_i$ is $\rho$-regular at $p_{0}$. 

We say that the fibre $f^{-1}(t_{0})$ is stratified $\rho$-regular at infinity if $f$ is stratified $\rho$-regular at all points $p_{0}\in \mathbb{X}^{\infty} \cap \tau^{-1}(t_{0}) $. 
We say that $t_0$ is a stratified asymptotic $\rho$-non-regular value if $f^{-1}(t_0)$ is not stratified $\rho$-regular at infinity. }
 \end{definition}
 
The following statements are easy consequences of the definitions of stratified $t$-regularity and stratified $\rho$-regularity.

\begin{proposition}
Stratified $t$-regularity implies stratified $\rho_E$-regularity.
\end{proposition}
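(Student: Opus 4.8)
The plan is to deduce the stratified statement directly from the non-stratified Proposition proved above (the one asserting that $t$-regularity at $p_{0}$ implies $\rho_{E}$-regularity at $p_{0}$), applied stratum by stratum. Recall that by definition $f$ is stratified $t$-regular at $p_{0} \in \mathbb{X}^{\infty}$ exactly when each restricted function $f_{i} = F_{\vert \overline{V_{i}}} : \overline{V_{i}} \to \mathbb{C}$, $i = 1,\ldots,t$, is $t$-regular at $p_{0}$, and similarly $f$ is stratified $\rho_{E}$-regular at $p_{0}$ exactly when each $f_{i}$ is $\rho_{E}$-regular at $p_{0}$. So it suffices to establish the non-stratified implication for each pair $(\overline{V_{i}}, f_{i})$ separately.

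First I would observe that the whole of the preceding construction --- the conormal modification $C(X)$, the relative conormal $C_{g}(X)$, the space of characteristic covectors at infinity $C^{\infty}$, and the definitions of $t$-regularity and of $\rho_{E}$-regularity at infinity --- goes through verbatim with the ambient reduced algebraic set $X \subset \mathbb{C}^{N}$ replaced by the reduced algebraic set $\overline{V_{i}} \subset \mathbb{C}^{N}$ and $f$ replaced by $f_{i}$; here one uses the standing hypothesis that the $\overline{V_{i}}$ are reduced, together with the fact that $\rho_{E}$, being the Euclidean norm on all of $\mathbb{C}^{N}$, restricts to $\overline{V_{i}}$ without any modification. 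Consequently the non-stratified Proposition, applied to $(\overline{V_{i}}, f_{i})$, gives: if $f_{i}$ is $t$-regular at $p_{0}$, then $f_{i}$ is $\rho_{E}$-regular at $p_{0}$.

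Putting these together: if $f$ is stratified $t$-regular at $p_{0}$, then $f_{i}$ is $t$-regular at $p_{0}$ for every $i$, hence $f_{i}$ is $\rho_{E}$-regular at $p_{0}$ for every $i$, which is precisely the assertion that $f$ is stratified $\rho_{E}$-regular at $p_{0}$. Quantifying over $p_{0} \in \mathbb{X}^{\infty} \cap \tau^{-1}(t_{0})$ then yields the fibrewise version: stratified $t$-regularity of $f^{-1}(t_{0})$ implies its stratified $\rho_{E}$-regularity at infinity.

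There is no serious obstacle here: this is a formal consequence of the definitions and of the already-established non-stratified Proposition, which is why it is presented as an easy consequence. The only point deserving a line of care is the reduction in the second paragraph, namely that the regularity theory at infinity may legitimately be run with $\overline{V_{i}}$ in place of $X$; in particular one should keep in mind that ``$f_{i}$ is $t$-regular at $p_{0}$'' (and likewise for $\rho_{E}$) is to be read as vacuously true when $p_{0}$ does not lie on the part at infinity of $\overline{{\rm graph}\, f_{i}}$, so that the two chains of implications genuinely match up index by index.
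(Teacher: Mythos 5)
Your argument is correct and is exactly what the paper intends: the paper offers no written proof, labelling this proposition an easy consequence of the definitions, and the intended reasoning is precisely your stratum-by-stratum application of the non-stratified implication to each pair $(\overline{V_i},f_i)$. Your added care about running the theory at infinity with $\overline{V_i}$ in place of $X$, and about the vacuous case when $p_0$ does not lie on the part at infinity of $\overline{{\rm graph}\,f_i}$, is sound and consistent with the paper's setup.
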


\begin{corollary} 
The set of stratified asymptotic non-$\rho_E$-regular values of $f$ is finite.
\end{corollary}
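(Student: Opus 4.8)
The plan is to deduce the statement from the non-stratified case, applied stratum by stratum. Recall the fact established inside the proof of the corollary on the finiteness of the asymptotic non-$\rho_E$-regular values of $f$: for a polynomial function $g$ on a reduced algebraic set $Y \subset \mathbb{C}^N$ there are only finitely many values $t_0 \in \mathbb{C}$ for which $g^{-1}(t_0)$ fails to be $t$-regular (this is exactly what the quoted argument of Corollary 2.12 of \cite{Tibar} shows, via a Thom $(a_g)$-stratification of $\overline{{\rm graph}\,g}$ and the finiteness of the critical values of $\tau$ restricted to its part at infinity). Applying this to each $f_i : \overline{V_i} \to \mathbb{C}$ --- which is legitimate since $\overline{V_i}$ is reduced by hypothesis --- yields finite sets $B_i \subset \mathbb{C}$, $i=1,\dots,t$, where $B_i$ is the set of values $t_0$ such that $f_i^{-1}(t_0)$ is not $t$-regular.

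I then claim that every stratified asymptotic non-$\rho_E$-regular value of $f$ belongs to $B_1 \cup \cdots \cup B_t$; as this union is finite, the corollary follows. By the preceding proposition, stratified $t$-regularity implies stratified $\rho_E$-regularity, so it is enough to prove that $t_0 \notin B_1 \cup \cdots \cup B_t$ forces $f^{-1}(t_0)$ to be stratified $t$-regular, i.e. that $f_i$ is $t$-regular at every $p_0 \in \mathbb{X}^\infty \cap \tau^{-1}(t_0)$, for every $i$. Fix such $p_0$ and $i$, set $\mathbb{X}_i = \overline{{\rm graph}\,f_i}$ and $\mathbb{X}_i^\infty = \mathbb{X}_i \cap (H^\infty \times \mathbb{C})$; since $\overline{V_i} \subset X$ and $f_i = F_{\mid \overline{V_i}}$ we have $\mathbb{X}_i \subset \mathbb{X}$ and $\mathbb{X}_i^\infty \subset \mathbb{X}^\infty$. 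If $p_0 \notin \mathbb{X}_i^\infty$ then $p_0 \notin \mathbb{X}_i$, so the set $C^\infty_{i,p_0}$ of characteristic covectors at infinity of $f_i$ over $p_0$ is empty and $f_i$ is $t$-regular at $p_0$ vacuously. If $p_0 \in \mathbb{X}_i^\infty$ then, since $t_0 \notin B_i$, the fibre $f_i^{-1}(t_0)$ is $t$-regular, which by definition means $f_i$ is $t$-regular at every point of $\mathbb{X}_i^\infty \cap \tau^{-1}(t_0)$, hence at $p_0$. This proves the claim.

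The argument is routine, consistent with the paper calling these statements ``easy consequences'' of the definitions; the only point needing a word of care --- the main, if modest, obstacle --- is the bookkeeping for points $p_0 \in \mathbb{X}^\infty$ not lying in $\mathbb{X}_i^\infty$, handled above by noting that the characteristic covectors of $f_i$ over such $p_0$ form the empty set. One could equally well skip the preceding proposition and run the same stratum-by-stratum reduction directly from the corollary on the finiteness of asymptotic non-$\rho_E$-regular values, applied to each $f_i$; the point $p_0 \notin \mathbb{X}_i^\infty$ is then disposed of by shrinking the neighbourhood $U$ in the definition of $\rho$-regularity so that $U \cap \mathbb{X}_i = \emptyset$.
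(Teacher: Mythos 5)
Your proof is correct and follows exactly the route the paper intends: the paper offers no written argument, stating only that the corollary is an ``easy consequence'' of the definitions, and the intended content is precisely your stratum-by-stratum reduction --- apply the non-stratified finiteness result to each $f_i:\overline{V_i}\to\mathbb{C}$, take the (finite) union, and invoke the preceding proposition that stratified $t$-regularity implies stratified $\rho_E$-regularity. Your extra care about points $p_0\in\mathbb{X}^\infty\setminus\mathbb{X}_i^\infty$ is a sensible clarification of a detail the paper leaves implicit, but it does not change the approach.
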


\begin{corollary} Let $l: X \to \mathbb{C}$ be a generic linear projection, then for all $p_{0} \in \mathbb{X}^{\infty}$, $l$ is stratified $t$-regular (and therefore stratified $\rho_E$-regular) at $p_0$. Moreover the set of stratified asymptotic non-$\rho_E$-regular values of $f$ is empty.
\end{corollary}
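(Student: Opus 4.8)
The plan is to deduce the statement from the two preceding propositions by arguing stratum by stratum, the crucial point being that $\mathcal{V}$ has only finitely many strata. First I would record that, by hypothesis, each $\overline{V_i}$ is a reduced algebraic subset of $\mathbb{C}^N$, so the proposition asserting that a generic linear projection is $t$-regular at infinity applies to $\overline{V_i}$ in place of $X$. This produces, for every $i \in \{1,\ldots,t\}$, a generic (i.e. Zariski-open dense) set $\Omega_i$ of linear projections $\mathbb{C}^N \to \mathbb{C}$ such that, whenever $l$ belongs to $\Omega_i$, the function $l_i := l_{\vert \overline{V_i}}$ is $t$-regular at every point at infinity of $\overline{{\rm graph}(l_i)}$.

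Next I would set $\Omega = \bigcap_{i=1}^t \Omega_i$; since $t$ is finite, $\Omega$ is again Zariski-open and dense, so a generic linear projection $l$ lies in $\Omega$. For such an $l$ every $l_i$ is $t$-regular at the relevant points at infinity, and hence, directly from the definition of stratified $t$-regularity, $l$ is stratified $t$-regular at all $p_0 \in \mathbb{X}^\infty$, where $\mathbb{X} = \overline{{\rm graph}(l)}$. Invoking the proposition that stratified $t$-regularity implies stratified $\rho_E$-regularity, $l$ is then stratified $\rho_E$-regular at all $p_0 \in \mathbb{X}^\infty$. In particular every fibre $l^{-1}(t_0)$ is stratified $\rho_E$-regular at infinity, so the set of stratified asymptotic non-$\rho_E$-regular values of $l$ is empty, which is the ``moreover'' part.

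The main --- and fairly mild --- obstacle is the bookkeeping about the various ``parts at infinity''. The definition of stratified $t$-regularity requires each $l_i$ to be $t$-regular at a given $p_0 \in \mathbb{X}^\infty$ with $\mathbb{X} = \overline{{\rm graph}(l)}$, whereas the conormal space underlying $t$-regularity for $l_i$ is built over the part at infinity of $\overline{{\rm graph}(l_i)}$, which is a subset of $\mathbb{X}^\infty$ because $\overline{V_i} \subset \overline{X}$. One therefore has to observe that at the points of $\mathbb{X}^\infty$ not lying on $\overline{{\rm graph}(l_i)}$ the $t$-regularity condition for $l_i$ is vacuously satisfied, so that the genericity obtained separately on each $\overline{V_i}$ does assemble into stratified $t$-regularity of $l$ on the whole of $\mathbb{X}^\infty$. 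Apart from this verification, the proof is a direct appeal to the already established propositions together with the finiteness of $\mathcal{V}$.
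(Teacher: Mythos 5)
Your argument is correct and is essentially what the paper intends: it records these statements as ``easy consequences of the definitions,'' i.e.\ apply the earlier proposition on generic linear projections to each of the finitely many reduced sets $\overline{V_i}$, intersect the resulting dense sets of linear forms, and invoke the implication from stratified $t$-regularity to stratified $\rho_E$-regularity. Your extra remark on matching the parts at infinity of $\overline{{\rm graph}(l_i)}$ with $\mathbb{X}^\infty$ is a reasonable piece of bookkeeping that the paper leaves implicit.
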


\section{Global Brasselet numbers and Brylinsky-Dubson-Kashiwara formulas}

Let $X \subset \mathbb{C}^n$ be  a reduced algebraic set of dimension $d$, equipped with a finite Whitney stratification $\mathcal{V}= \{V_i\}_{i=1}^t$. We assume that $V_1,\ldots,V_{t-1}$ are connected, $\overline{V_1},\ldots,\overline{V_t}$ are reduced and that $V_t=X_{\rm reg}$, where $X_{\rm reg}$ has dimension $d$ . Let $f : X \rightarrow \mathbb{C}$ be  a complex polynomial, restriction to $X$ of a polynomial function $F : \mathbb{C}^n \rightarrow \mathbb{C}$, i.e., $f=F_{\vert X}$. We assume that $f$ has a finite number of critical points $q_1,\ldots,q_s$ and we denote by $\{a_1,\ldots,a_r\}$ the set of stratified asymptotic non-$\rho_E$-regular values of $f$.

For simplicity, we will write $B_R$ for the ball $B_R(0)$ and $S_R$ for $\partial B_R$.

\begin{lemma}
Let $\alpha : X \to \mathbb{Z}$ be a constructible function with respect to $\mathcal{V}$. The function $c \mapsto \chi(f^{-1}(c),\alpha)$ is constant on $\mathbb{C} \setminus 
\Big( \{ f(q_1),\ldots,f(q_s) \} \cup \{a_1,\ldots,a_r \} \Big)$.
\end{lemma}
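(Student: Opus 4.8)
The plan is to show that the function $c \mapsto \chi(f^{-1}(c),\alpha)$ is locally constant on the open set $\Omega := \mathbb{C} \setminus \big( \{f(q_1),\ldots,f(q_s)\} \cup \{a_1,\ldots,a_r\} \big)$, which, since $\Omega$ is connected (it is $\mathbb{C}$ minus finitely many points), forces it to be constant. Fix $c_0 \in \Omega$. The strategy is to produce a small disc $D$ around $c_0$ and a large ball $B_R$ such that $f^{-1}(D) \cap X$ deformation retracts in a stratified way onto $f^{-1}(c_0) \cap X$, compatibly with the strata of $\mathcal{V}$, so that $\alpha$ is carried along and the Euler integrals agree. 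First I would invoke the stratified $\rho_E$-regularity at infinity: since $c_0$ is not one of the $a_j$, for $i = 1,\ldots,t$ the restriction $f_i : \overline{V_i} \to \mathbb{C}$ is $\rho_E$-regular at infinity over a neighborhood of $c_0$, which (together with the corollary that such values are finite) gives a radius $R$ and a disc $D \ni c_0$ so that over $D$ and outside $B_R$, all the fibres $f^{-1}(t) \cap \overline{V_i}$ meet every sphere $S_{R'}$, $R' \ge R$, transversally. Second, since $c_0 \notin \{f(q_1),\ldots,f(q_s)\}$, shrinking $D$ we may assume $D$ contains no critical value of $f$ on any stratum, so $f_i : f_i^{-1}(D) \cap V_i \to D$ is a proper stratified submersion on the compact part $f_i^{-1}(D) \cap \overline{V_i} \cap B_R$.

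The key step is then a stratified Ehresmann-type fibration argument: combining the interior triviality coming from the absence of critical points in $B_R$ with the control at infinity coming from $\rho_E$-regularity, one builds a stratum-preserving vector field lifting $\partial/\partial t$ on $D$ whose flow is complete (the transversality to the spheres keeps trajectories from escaping to infinity in finite parameter-time), giving a stratified homeomorphism $f^{-1}(D) \cap X \cong (f^{-1}(c_0)\cap X) \times D$ that respects $\mathcal{V}$. Restricting this homeomorphism to each stratum $V_i$ shows $f^{-1}(t) \cap V_i$ is homeomorphic to $f^{-1}(c_0) \cap V_i$ for every $t \in D$, hence $\chi_c(f^{-1}(t) \cap V_i) = \chi_c(f^{-1}(c_0) \cap V_i)$; writing $\alpha = \sum_{i=1}^t n_i \mathbf{1}_{V_i}$ and summing gives $\chi(f^{-1}(t),\alpha) = \chi(f^{-1}(c_0),\alpha)$ on $D$. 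This establishes local constancy, and connectedness of $\Omega$ finishes the proof.

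The main obstacle is the control at infinity: one must be careful that the Euler characteristic with compact support of the (non-compact) affine fibres $f^{-1}(t)\cap V_i$ is genuinely constant, not merely the Euler characteristic of the compact truncations $f^{-1}(t)\cap V_i \cap B_R$. This is exactly what stratified $\rho_E$-regularity buys us: the transversality of the fibres to all large spheres means that $f^{-1}(t)\cap V_i$ is homeomorphic to the product of $f^{-1}(t)\cap V_i \cap B_R$ with a half-open collar, so no topology is lost at infinity and the product structure over $D$ extends to the full affine fibres. One should also note that each $V_i$ is a complex manifold (even-dimensional), so by Poincar\'e duality $\chi_c$ and $\chi$ agree on $V_i$ when that is convenient, though the argument above is purely topological and does not actually need this. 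Finally, the degenerate case $\dim V_i = 0$ is trivial (a point-stratum either lies in a fibre or not, but a $0$-dimensional stratum meets the generic fibre in at most finitely many points whose count is locally constant away from $f$-values of those points, which are among the $f(q_j)$).
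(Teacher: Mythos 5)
Your argument is correct, and it rests on the same two pillars as the paper's proof: the absence of critical values of $f$ on a small disc $D$ around $c_0$, and stratified $\rho_E$-regularity at infinity over $D$, which together make the family $f^{-1}(D)\to D$ topologically trivial; local constancy plus connectedness of the complement of finitely many points then finishes. Where you genuinely differ is in how the case of a general constructible $\alpha$ is extracted. You construct one global, stratum-preserving trivialization (Thom--Mather on the compact part glued to a controlled lift near infinity) and restrict it to each open stratum $V_i$, invoking the homeomorphism invariance of $\chi_c$ (or $\chi_c=\chi$ for the even-dimensional smooth $V_i$). The paper never does this: it only proves the case $\alpha=\mathbf{1}_Z$ for $Z$ a \emph{closed} algebraic union of strata --- showing $\chi(f^{-1}(c))=\chi(f^{-1}(c)\cap B_R)$ by a deformation retract and getting local constancy from the isotopy lemma on the compact part --- and then recovers $\chi(V_i\cap f^{-1}(c))$ from $\chi(\overline{V_i}\cap f^{-1}(c))$ and $\chi((\overline{V_i}\setminus V_i)\cap f^{-1}(c))$ by additivity of $\chi_c$ together with the identity $\chi=\chi_c$ established after Definition \ref{ConstructibleAndStratification}. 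The paper's route buys the freedom never to assert that the trivialization at infinity preserves individual open strata; it uses only the $\rho_E$-regularity of the closed sets $\overline{V_i}$, which is literally what ``stratified $\rho_E$-regularity'' provides. Your route is shorter once the stratified trivialization is granted, but one small imprecision is worth flagging: transversality of the fibres to the large spheres does not by itself keep trajectories from escaping to infinity in finite time; completeness of the flow comes from choosing the lift of $\partial/\partial t$ near infinity \emph{tangent to the levels of} $\rho_E$ on each stratum, which is exactly what the regularity condition makes possible. With that adjustment your proof is sound.
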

\proof Let $c \in \mathbb{C} \setminus 
\Big( \{ f(q_1),\ldots,f(q_s) \} \cup \{a_1,\ldots,a_r \} \Big)$ and let us choose $R_c >0$ such that $f^{-1}(c) \cap \{\rho_E \ge R_c\}$ does not contain any critical point of ${\rho_E}_{\vert f^{-1}(c)}$ (here $f^{-1}(c)$ is equipped with the Whitney stratification $\sqcup V_i \cap f^{-1}(c)$). This implies that $f^{-1}(c) \cap B_R$ is a retract by deformation of $f^{-1}(c)$ and that $\chi (f^{-1}(c)) = \chi (f^{-1}(c) \cap B_R)$ for any $R \ge R_c$. Since $f^{-1}(c)$ is stratified $\rho_E$-regular at infinity, there is $R \ge R_c$ and $\epsilon >0$ such that the mapping $$(\rho_E,f) : f^{-1}(D_\epsilon(c)) \cap \{ \rho \ge R \} \to \mathbb{R} \times \mathbb{C},$$ where $D_\epsilon(c)$ is the closed disc of radius $\epsilon$ centered at $c$ in $\mathbb{C}$, is a stratified submersion and so for $c' \in D_\epsilon(c)$, $f^{-1}(c') \cap B_R$ is also a retract by deformation  of $f^{-1}(c')$. But since $c$ is a  regular value of $f$, $$\chi (f^{-1}(c') \cap B_R)= \chi (f^{-1}(c) \cap B_R),$$ for $c'$ in a small neighborhood of $c$. Hence the result is proved for the function $c \mapsto \chi ( f^{-1}(c))$, i.e., when $\alpha = {\bf 1}_X$ (see the discussion after Definition \ref{ConstructibleAndStratification}).

Let $V$ be a stratum of $X$. By additivity, we have 
$$\chi_c( \bar{V} \cap f^{-1}(c)) = \chi_c( V \cap f^{-1}(c)) +
\chi_c(\bar{V} \setminus V \cap f^{-1}(c)),$$
and by the arguments after Definition \ref{ConstructibleAndStratification}, we get that
$$\chi ( \bar{V} \cap f^{-1}(c)) = \chi ( V \cap f^{-1}(c)) +
\chi (\bar{V} \setminus V \cap f^{-1}(c)).$$
But since $\bar{V}$ and $\bar{V} \setminus V$ are algebraic subsets of $X$ stratified by strata of $\mathcal{V}$, $c$ is a regular value and $f^{-1}(c)$ is stratified $\rho_E$-regular at infinity for $f : \bar{V} \to \mathbb{C}$ and $f : \bar{V} \setminus V \to \mathbb{C}$. Therefore the functions 
$$c \mapsto \chi(\bar{V} \cap f^{-1}(c)) \hbox{ and }c \mapsto \chi((\bar{V} \setminus V) \cap f^{-1}(c)),$$ are constant on $\mathbb{C} \setminus 
\Big( \{ f(q_1),\ldots,f(q_s) \} \cup \{a_1,\ldots,a_r \} \Big)$ and so is the function 
$c \mapsto \chi(V \cap f^{-1}(c))$. Hence for $i \in \{1,\ldots,t\}$, the function 
$c \mapsto \chi(V_i \cap f^{-1}(c))$ is constant on $\mathbb{C} \setminus 
\Big( \{ f(q_1),\ldots,f(q_s) \} \cup \{a_1,\ldots,a_r \} \Big)$. This implies that $c \mapsto \chi(f^{-1}(c),\alpha)$ is also constant on $\mathbb{C} \setminus 
\Big( \{ f(q_1),\ldots,f(q_s) \} \cup \{a_1,\ldots,a_r \} \Big)$ for any constructible function $\alpha$. \endproof

\begin{definition}
{\rm When $X$ is equidimensional, we define the global Brasselet number of $f$ at $c$ by
$${\rm B}_{f,c}^X =\chi(f^{-1}(c),{\rm Eu}_X),$$
and the global Euler obstruction of $f$ at $c$ by
$${\rm Eu}_{f,c}^X ={\rm Eu}(X)-{\rm B}_{f,c}^X.$$}
\end{definition}

Let $a \in \mathbb{C}$ and let $R_a >0$ be such that $f^{-1}(a) \cap B_{R_a}$ does not contain any critical point of ${\rho_E}_{\vert f^{-1}(a)}$. Then there exists $\delta_1 >0$ such that  $$f : f^{-1} \Big(D_{\delta_1}(a) \setminus \{a\} \Big) \to D_{\delta_1}(a) \setminus {\{a\}},$$ is a locally trivial topological fibration (this is just a singular version of the Milnor-L\^e fibration) and so $\chi( f^{-1}(c) \cap B_{R_a})$ is constant for $c \in D_{\delta_1}(a) \setminus {\{a\}}$. 

Since $\chi(f^{-1}(c))$ is constant on $\mathbb{C} \setminus 
\Big( \{ f(q_1),\ldots,f(q_s) \} \cup \{a_1,\ldots,a_r \} \Big)$, there exists $\delta_2 >2$ such that $\chi(f^{-1}(c))$ is constant for $c \in D_{\delta_2}(a) \setminus {\{a\}}$. Since 
$$\chi (f^{-1}(c)) = \chi (f^{-1}(c) \cap B_{R_a}) + \chi (f^{-1}(c) \cap \{ \rho_E \ge R_a \}) - \chi (f^{-1}(c) \cap S_{R_a})$$ $$= \chi (f^{-1}(c) \cap B_{R_a}) + \chi (f^{-1}(c) \cap \{ \rho_E \ge R_a \}),$$
we see that $\chi (f^{-1}(c) \cap \{ \rho_E \ge R_a \})$ is constant for $c$ in $D_\delta(a) \setminus \{a\}$, where $\delta={\rm min} \{\delta_1,\delta_2\}$. 

Let $R'_a >0$ be such that $f^{-1}(a) \cap B_{R'_a}$ does not contain any critical point of ${\rho_E}_{\vert f^{-1}(a)}$. Then there exists $\delta'>0$ such that $\chi (f^{-1}(c) \cap \{ \rho_E \ge R'_a \})$ is constant for $c$ in $D_\delta'(a) \setminus \{a\}$. We can suppose that $R'_a > R_a$. Since there are no critical points of ${\rho_E}_{\vert f^{-1}(a)}$ on $\{ R_a \le \rho_E \le R'_a\}$, $a$ is a regular value of $f : \{ R_a \le \rho_E \le R'_a\} \to \mathbb{C}$ (note that a critical point of $f$ on $f^{-1}(a)$ is also a critical point of ${\rho_E}_{\vert f^{-1}(a)}$). Hence there exists $\nu >0$ such that $$f : f^{-1}(D_\nu (a)) \cap \{ R_a \le \rho_E \le R'_a\} \to D_\nu(a),$$ is a locally trivial topological fibration. Let $\nu'= {\rm min} \{\delta,\delta',\nu\}$. For $c$ in $D_{\nu'} (a) \setminus \{a\}$, we have 
$$\chi (f^{-1}(c) \cap \{ \rho_E \ge R_a \})= \chi (f^{-1}(c) \cap \{ \rho_E \ge R'_a \}) + \chi (f^{-1}(c) \cap \{ R_a \le \rho_E \le R'_a \})$$ $$  =\chi (f^{-1}(c) \cap \{ \rho_E \ge R'_a \}) + \chi (f^{-1}(a) \cap \{ R_a \le \rho_E \le R'_a \}).$$
But $\rho : f^{-1}(a) \cap \{ R_a \le \rho_E \le R_{a'} \} \to \mathbb{C}$ is a stratified submersion and so 
$$\chi (f^{-1}(a) \cap \{ R_a \le \rho_E \le R_{a'} \}) = 
\chi( f^{-1}(a) \cap \{  \rho_E = R_a \})=0.$$
We have proved that 
$$\chi (f^{-1}(c) \cap \{ \rho_E \ge R_a \})= \chi (f^{-1}(c) \cap \{ \rho_E \ge R'_a \}),$$
if $c$ is sufficiently close to $a$.
\begin{definition}
{\rm Let $\alpha : X \to \mathbb{Z}$ be a constructible function with respect to $\mathcal{V}$. For any $a \in \mathbb{C}$, we set 
$${\rm B}_{f,a}^{X,\infty}(\alpha) =\lim_{c \to a} \chi(f^{-1}(c) \cap \{ \rho_E \ge R_a\}, \alpha) \hbox{ and } \lambda_{f,a}^{X,\infty} ={\rm B}_{f,a}^{X,\infty}({\bf 1}_X),$$
where $R_a >0$ is such that $f^{-1}(a) \cap \{ \rho_E \ge R_a \} $ does not contain any critical point of ${\rho_E}_{\vert f^{-1}(a) }$.}
\end{definition}
Note that  ${\rm B}_{f,a}^{X,\infty}(\alpha)$ is well-defined since, by the previous considerations, $$\lim_{c \to a} \chi (\overline{V_i} \cap f^{-1}(c) \cap \{\rho_E \ge R_a \}),$$ is well-defined and so is $$\lim_{c \to a} \chi (V_i \cap f^{-1}(c) \cap \{\rho_E \ge R_a \}).$$
\begin{lemma}
Let $\alpha : X \to \mathbb{Z}$ be a constructible function with respect to $\mathcal{V}$. If $c \in \mathbb{C}$ is such that $f^{-1}(c)$ is stratified $\rho_E$-regular at infinity then ${\rm B}_{f,c}^{X,\infty}(\alpha)=0$.
\end{lemma}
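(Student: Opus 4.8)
The statement to prove is: if $f^{-1}(c)$ is stratified $\rho_E$-regular at infinity, then ${\rm B}_{f,c}^{X,\infty}(\alpha) = 0$ for every constructible function $\alpha$ with respect to $\mathcal{V}$. Since ${\rm B}_{f,c}^{X,\infty}(\alpha) = \lim_{c'\to c} \chi(f^{-1}(c') \cap \{\rho_E \ge R_c\}, \alpha)$ and since both sides are linear in $\alpha$ and every constructible $\alpha$ is an integer combination of the ${\bf 1}_{\overline{V_i}}$ (equivalently of the ${\bf 1}_{V_i}$), it suffices to treat $\alpha = {\bf 1}_X$ and, more generally, the case where $X$ is replaced by $\overline{V_i}$ (which is again an algebraic set stratified by strata of $\mathcal{V}$, and for which $f^{-1}(c)$ is again stratified $\rho_E$-regular at infinity since stratified $\rho_E$-regularity is inherited by the restriction to $\overline{V_i}$). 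So the whole problem reduces to showing $\lambda_{f,c}^{X,\infty} = \lim_{c'\to c}\chi(f^{-1}(c')\cap\{\rho_E \ge R_c\}) = 0$ when $f^{-1}(c)$ is stratified $\rho_E$-regular at infinity.

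First I would invoke stratified $\rho_E$-regularity of $f^{-1}(c)$ to produce, exactly as in the proof of the first Lemma of Section 4, a radius $R \ge R_c$ and an $\epsilon > 0$ such that the map $(\rho_E, f) : f^{-1}(D_\epsilon(c)) \cap \{\rho_E \ge R\} \to \mathbb{R} \times \mathbb{C}$ is a proper stratified submersion (properness comes from $\rho_E$ being proper). By Thom's first isotopy lemma this map is a locally trivial stratified fibration; in particular, for $c'$ close to $c$, $f^{-1}(c') \cap \{\rho_E \ge R\}$ is stratified-homeomorphic to $f^{-1}(c) \cap \{\rho_E \ge R\}$, and moreover fibring also over the $\rho_E$-direction shows that $f^{-1}(c') \cap \{\rho_E \ge R\}$ retracts (or is a mapping cylinder) onto $f^{-1}(c') \cap \{\rho_E = R\}$. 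Since $\{\rho_E = R\} = S_R$ and $f^{-1}(c') \cap S_R$ is Whitney-stratified by odd-dimensional strata, $\chi(f^{-1}(c') \cap S_R) = 0$ by the standard fact (Lemma 5.0.3 in \cite{Schu} or Proposition 1.6 in \cite{McCroryParusinski}) already used after Definition \ref{ConstructibleAndStratification}. Hence $\chi(f^{-1}(c') \cap \{\rho_E \ge R\}) = 0$.

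It remains to pass from the radius $R$ back to the radius $R_c$ in the definition of ${\rm B}_{f,c}^{X,\infty}$. This is precisely the content of the computation carried out in the paragraph preceding the Definition of ${\rm B}_{f,a}^{X,\infty}$: for $c'$ sufficiently close to $c$ one has $\chi(f^{-1}(c') \cap \{\rho_E \ge R_c\}) = \chi(f^{-1}(c') \cap \{\rho_E \ge R\})$, because $c$ is a regular value of $f$ restricted to the shell $\{R_c \le \rho_E \le R\}$ (no critical point of ${\rho_E}_{|f^{-1}(c)}$, hence no critical point of $f$, lies there), so $f$ is a locally trivial fibration over a small disc on that shell, and $\chi$ of the shell slice equals $\chi(f^{-1}(c) \cap \{R_c \le \rho_E \le R\})$, which in turn equals $\chi(f^{-1}(c) \cap \{\rho_E = R_c\}) = 0$ since $\rho_E$ restricts to a stratified submersion there. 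Combining, $\chi(f^{-1}(c') \cap \{\rho_E \ge R_c\}) = \chi(f^{-1}(c') \cap \{\rho_E \ge R\}) = 0$ for $c'$ near $c$, so the limit ${\rm B}_{f,c}^{X,\infty}({\bf 1}_X) = 0$, and likewise with $X$ replaced by each $\overline{V_i}$; by linearity ${\rm B}_{f,c}^{X,\infty}(\alpha) = 0$ for all $\alpha$.

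**Main obstacle.** The only genuinely delicate point is the first step: ensuring that the map $(\rho_E, f)$ is a \emph{proper} stratified submersion on $f^{-1}(D_\epsilon(c)) \cap \{\rho_E \ge R\}$ uniformly in a neighborhood of $c$, so that Thom's isotopy lemma applies and gives local triviality over a two-dimensional (rather than just one-dimensional) base — this is what legitimizes both "$f^{-1}(c') \cap \{\rho_E \ge R\}$ deformation-retracts to its $\rho_E = R$ slice" and the comparison of different radii. This is exactly the kind of argument already deployed twice in Section 4 (in the first Lemma and in the paragraph defining ${\rm B}_{f,a}^{X,\infty}$), so in the write-up I would simply cite those constructions rather than redo the stratified-transversality bookkeeping; everything else is additivity of $\chi_c$, Poincaré duality on even-dimensional strata, and the vanishing of $\chi$ of odd-dimensional Whitney-stratified compacta.
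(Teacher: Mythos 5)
Your proof is correct and follows essentially the same route as the paper: reduce by linearity to $\lambda_{f,c}^{X,\infty}$ (applied to each $\overline{V_i}$), use stratified $\rho_E$-regularity to make $(\rho_E,f)$ a stratified submersion outside a large ball, conclude that $\rho_E$ restricted to nearby fibres is a proper stratified submersion so that the exterior piece fibres over its sphere slice, and invoke the vanishing of $\chi$ for the odd-dimensional-stratified set $f^{-1}(c')\cap S_R$. Your extra care about comparing the radii $R$ and $R_c$ is a point the paper handles implicitly via the radius-independence established just before the definition of ${\rm B}_{f,a}^{X,\infty}$, so it adds explicitness but not a new idea.
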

\proof It is enough to prove that $\lambda_{f,c}^{X,\infty}=0$.
Since $f^{-1}(c)$ is stratified $\rho_E$-regular at infinity, there is $R_c >0$ and $\epsilon >0$ such that the mapping $$(\rho_E,f) : f^{-1}(D_\epsilon(c)) \cap \{ \rho_E \ge R_c \} \to \mathbb{R} \times \mathbb{C},$$ is a stratified submersion. Let $c' \in f^{-1}(D_\epsilon(c))$. Then the mapping $$\rho_E : \{\rho_E \ge R_c \} \cap f^{-1}(c') \to \mathbb{R},$$ is a proper stratified submersion and so $$\chi (f^{-1}(c') \cap \{\rho_E \ge R_c \}) =
\chi (f^{-1}(c') \cap \{\rho_E = R_c \})=0.$$
\endproof

\begin{definition}
{\rm Let $\alpha : X \to \mathbb{Z}$ be a constructible function with respect to $\mathcal{V}$. We set 
$${\rm B}_{f}^{X,\infty}(\alpha) = \sum_{c \in \mathbb{C}} {\rm B}_{f,c}^{X,\infty}(\alpha)
 \hbox{ and } \lambda_f^{X,\infty} = {\rm B}_{f}^{X,\infty}({{\bf 1}}_X).$$}
\end{definition}

\begin{definition}
{\rm When $X$ is equidimensional, we define the Brasselet numbers at infinity of $f$ by:
$${\rm B}_{f,c}^{X,\infty} ={\rm B}_{f,c}^{X,\infty}({\rm Eu}_X),$$
for $c \in \mathbb{C}$, and the total Brasselet number at infinity of $f$ by:
$${\rm B}_{f}^{X,\infty} ={\rm B}_{f}^{X,\infty}({\rm Eu}_X).$$}
\end{definition}

We start comparing the global Brasselet numbers of $f$ and the Euler obstructions of the fibres of $f$.
\begin{proposition}\label{BrasseletNumberAndEulerObstruction}
Let $a \in \mathbb{C}$, we have
$${\rm B}_{f,a}^X = {\rm Eu}( f^{-1}(a)) + \sum_{j \ \vert \ f(q_j)=a} {\rm Eu}_X (q_j) - {\rm Eu}_{f^{-1}(a)} (q_j).$$
\end{proposition}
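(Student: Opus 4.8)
The plan is to rewrite both sides of the equality as Euler integrals of constructible functions over the single space $f^{-1}(a)$, and then to show that the difference of the two integrands is concentrated at the critical points of $f$ lying on that fibre.

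By definition ${\rm B}_{f,a}^X=\chi\bigl(f^{-1}(a),{\rm Eu}_X\bigr)$, and by property (2) of Remark \ref{Remarque}, applied to the algebraic set $f^{-1}(a)$, one has ${\rm Eu}\bigl(f^{-1}(a)\bigr)=\chi\bigl(f^{-1}(a),{\rm Eu}_{f^{-1}(a)}\bigr)$. Introducing the constructible function $\gamma$ on $f^{-1}(a)$ defined by $\gamma(x)={\rm Eu}_X(x)-{\rm Eu}_{f^{-1}(a)}(x)$, the proposition becomes equivalent to the identity $\chi\bigl(f^{-1}(a),\gamma\bigr)=\sum_{j\,\vert\,f(q_j)=a}\gamma(q_j)$.

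The heart of the argument — and, I expect, the main obstacle — is the local comparison: if $x\in f^{-1}(a)$ is none of the critical points $q_1,\dots,q_s$, then ${\rm Eu}_X(x)={\rm Eu}_{f^{-1}(a)}(x)$, i.e. $\gamma(x)=0$. Since the critical set is finite, $x$ has a neighbourhood containing no critical point, so $f_{\vert V_i}$ is a submersion at every nearby point of every stratum $V_i$; equivalently, $F^{-1}(a)$ is transverse in $\mathbb{C}^n$ to all strata of $\mathcal V$ near $x$. Hence $\{V_i\cap f^{-1}(a)\}_i$ is a Whitney stratification of $f^{-1}(a)$ near $x$ with top stratum $X_{\rm reg}\cap f^{-1}(a)=\bigl(f^{-1}(a)\bigr)_{\rm reg}$, so that, locally at $x$, $f^{-1}(a)$ is a stratified-transverse hyperplane section of $X$ through $x$; the desired equality is then the invariance of the local Euler obstruction under such a section. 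This is a purely local statement about the germ $(X,x)$, and it can be treated within the local theory: a neighbourhood of $x$ in $X$ carries a good stratification relative to $f-a$ (the restriction of $\mathcal V$ already works, since $f-a$ has no critical point there, cf. Definition \ref{DefGoodStrat}), one has ${\rm Eu}_{f-a,X}(x)=0$ by Proposition \ref{Prop-n-reg}, and Theorem \ref{BMPS}, combined with the local triviality of the Milnor–L\^e fibration of $f-a$ at $x$, computes the relevant local Euler integral of ${\rm Eu}_X$, which the Brasselet–L\^e–Seade formula (Theorem \ref{BLS}) applied inside $f^{-1}(a)$ at $x$ identifies with ${\rm Eu}_{f^{-1}(a)}(x)$.

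Granting this claim, $\gamma$ is supported on the finite set $\{q_j\mid f(q_j)=a\}$, hence $\gamma=\sum_{f(q_j)=a}\gamma(q_j)\,{\bf 1}_{\{q_j\}}$; since $\chi_c(\{q_j\})=1$, additivity of the Euler integral gives $\chi\bigl(f^{-1}(a),\gamma\bigr)=\sum_{f(q_j)=a}\gamma(q_j)=\sum_{f(q_j)=a}\bigl({\rm Eu}_X(q_j)-{\rm Eu}_{f^{-1}(a)}(q_j)\bigr)$, which is precisely the asserted equality. (The identification of $\chi$ with $\chi_c$ used here is the one recalled after Definition \ref{ConstructibleAndStratification}.)
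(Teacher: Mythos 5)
Your overall strategy coincides with the paper's: both proofs reduce the statement to the fact that ${\rm Eu}_X$ and ${\rm Eu}_{f^{-1}(a)}$ agree at every point of $f^{-1}(a)$ that is not a stratified critical point of $f$, and then conclude by additivity of the Euler integral (the paper phrases the bookkeeping stratum by stratum, introducing the sets $\Gamma_i$ of critical points on $V_i\cap f^{-1}(a)$, but the content is the same as your ``$\gamma$ is supported on the critical points'' argument). Your reduction and your final additivity step are correct, including the degenerate case of $0$-dimensional strata, since every point of such a stratum is by definition a critical point of $f$.

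The weak point is precisely the step you flag as the main obstacle. The invariance ${\rm Eu}_X(x)={\rm Eu}_{f^{-1}(a)}(x)$ at a noncritical point $x$ is true, but the derivation you sketch does not establish it. Theorem \ref{BMPS} together with ${\rm Eu}_{f-a,X}(x)=0$ gives ${\rm Eu}_X(x)=\chi\bigl(X\cap B_\epsilon(x)\cap f^{-1}(a+\delta),{\rm Eu}_X\bigr)$; evaluating this integral via the local triviality of the fibration and the conic structure of $f^{-1}(a)\cap B_\epsilon(x)$ returns ${\rm Eu}_X(x)$ tautologically and yields no comparison. On the other side, Theorem \ref{BLS} applied inside $f^{-1}(a)$ expresses ${\rm Eu}_{f^{-1}(a)}(x)$ as an Euler integral of a \emph{different} constructible function, ${\rm Eu}_{f^{-1}(a)}$, over a \emph{different} space, a generic hyperplane slice of $f^{-1}(a)\cap B_\epsilon(x)$; to identify the two integrals you would already need to know that ${\rm Eu}_X$ and ${\rm Eu}_{f^{-1}(a)}$ agree on the nearby strata, i.e.\ an induction on depth that is not set up, plus an identification of the two integration domains that is not provided. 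The paper avoids this entirely by quoting Dubson (\cite{Db2}, Proposition IV.4.1.1): for a stratum $V_i$ met transversally by $f^{-1}(a)$, one has ${\rm Eu}_X(V_i)={\rm Eu}_{f^{-1}(a)}(V_i\cap f^{-1}(a))$. If you want a self-contained argument, the clean route is to observe that at a noncritical $x$ one may choose a normal slice to $V_i$ in $\mathbb{C}^n$ contained in $F^{-1}(a)$, so that the normal Morse data (hence the complex links) of $(X,V_i)$ and of $(f^{-1}(a),V_i\cap f^{-1}(a))$ at $x$ coincide, and then apply the Brylinski--Dubson--Kashiwara index formula inductively on depth; either cite Dubson or carry out that induction, but the BMPS/BLS combination as written does not close the loop.
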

\proof By definition,
$${\rm B}_{f,a}^X = \sum_{i=1}^t \chi( V_i \cap f^{-1}(a)) {\rm Eu}_X(V_i).$$
For each $i \in \{1,\ldots,t\}$, let us denote by $\Gamma_i$ the set consisting of the $q_j$'s such that $q_j \in V_i \cap f^{-1}(a)$.  The partition
$$\displaylines{
\qquad  f^{-1}(a) = \left(\sqcup_{i \ \vert \ \Gamma_i = \emptyset} V_i \cap f^{-1}(a) \right) \bigcup \hfill \cr
\hfill \left( \sqcup_{i \ \vert \ \Gamma_i \not= \emptyset} V_i \cap f^{-1}(a) \setminus \Gamma_i \right) 
\bigcup \left( \sqcup_{i \ \vert \ \Gamma_i \not= \emptyset} \Gamma_i \right), \qquad \cr
}$$
gives a Whitney stratification of $f^{-1}(a)$, and 
$$\displaylines{
\quad {\rm Eu}(f^{-1}(a))  = \sum_{i \ \vert \ \Gamma_i = \emptyset} \chi(V_i \cap f^{-1}(a)) {\rm Eu}_{f^{-1}(a)} (V_i \cap f^{-1}(a))  \hfill \cr
\qquad \qquad + \sum_{i \ \vert \ \Gamma_i \not= \emptyset} \chi(V_i \cap f^{-1}(a) \setminus \Gamma_i) {\rm Eu}_{f^{-1}(a)} (V_i \cap f^{-1}(a)\setminus \Gamma_i) \hfill \cr
\hfill + \sum_{i \ \vert \ \Gamma_i \not= \emptyset} \sum_{q \in \Gamma_i} {\rm Eu}_{ f^{-1}(a)} (q). \qquad \cr
}$$
If $\Gamma_i$ is empty then the intersection $V_i \cap f^{-1}(a)$ is transverse (necessarily dim $V_i >0$) and by \cite{Db2}, Proposition IV. 4.1.1 
$${\rm Eu}_X (V_i)= {\rm Eu}_{f^{-1}(a)} (V_i \cap f^{-1}(a)).$$
If $\Gamma_i$ is not empty and ${\rm dim} V_i >0$, then $$\chi(V_i \cap f^{-1}(a)) = \chi(V_i \cap f^{-1}(a) \setminus \Gamma_i) + \# \Gamma_i,$$ and 
${\rm Eu}_X (V_i)= {\rm Eu}_{X \cap f^{-1}(a)} (V_i \cap f^{-1}(a) \setminus \Gamma_i)$ because outside $\Gamma_i$, $f^{-1}(a)$ intersects $V_i$ transversally. 
If $\Gamma_i$ is not empty and ${\rm dim} V_i =0$, then $$\chi(V_i \cap f^{-1}(a) )=1 \hbox{ and }\chi(V_i \cap f^{-1}(a) \setminus \Gamma_i)=0.$$ Therefore we get 
$$\displaylines{
\quad {\rm B}_{f,a}^X = \sum_{i \ \vert \ \Gamma_i = \emptyset} \chi(V_i \cap f^{-1}(a)) {\rm Eu}_{f^{-1}(a)} (V_i \cap f^{-1}(a)) \hfill \cr
\qquad \qquad + \sum_{i \ \vert \ \Gamma_i \not= \emptyset} \chi(V_i \cap f^{-1}(a) \setminus \Gamma_i) {\rm Eu}_{f^{-1}(a)} (V_i \cap f^{-1}(a)\setminus \Gamma_i) \hfill \cr
\qquad \qquad  \qquad \qquad  + \sum_{i \ \vert \ \Gamma_i \not= \emptyset} \sum_{q \in \Gamma_i} {\rm Eu}_X(q) \hfill \cr}$$
$$= {\rm Eu}(f^{-1}(a)) + \sum_{i \ \vert \ \Gamma_i \not= \emptyset} {\rm Eu}_X(q) -{\rm Eu}_{f^{-1}(a)} (q).$$
\endproof
Note that for a regular value $c$ of $f$, ${\rm B}_{f,c}^X ={\rm Eu}(f^{-1}(c))$. Furthermore if $X= \mathbb{C}^n$ then ${\rm Eu}_X (q_j) =1$ and ${\rm Eu}_{f^{-1}(a)} = 1 + (-1)^{n-2}  \mu'(f,q_j)$, where $\mu'(f,q_j)$ is the first Milnor-Teissier number of $f$ at $q_j$, so 
$${\rm B}_{f,a}^X = \chi(f^{-1}(a)) = {\rm Eu} (f^{-1}(a)) + (-1)^{n-1}\sum_{j \ \vert \ f(q_j)=a} \mu' (f,q_j),$$
and we recover Equality (3.3) in \cite{Tibar2004}. 

A direct corollary of the previous proposition is a global relative version of the local index formula of Brylinski, Dubson and Kashiwara.

\begin{corollary}\label{BDKglobal1}
Let $\alpha : X \to \mathbb{Z}$ be a constructible function with respect to $\mathcal{V}=$. For any $a \in \mathbb{C}$, we have 
$$\chi (f^{-1}(a),\alpha) =\sum_{i=1}^t {\rm B}_{f,a}^{\overline{V_i}} \eta (V_i,\alpha).$$
\end{corollary}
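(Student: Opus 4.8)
The plan is to expand both sides in the basis of Euler obstructions of closures of strata and match coefficients. Recall that the constructible functions $\{{\rm Eu}_{\overline{V_i}}\}_{i=1}^t$ form a basis of the group of constructible functions with respect to $\mathcal{V}$, so I can write
\[
\alpha = \sum_{i=1}^t m_i \, {\rm Eu}_{\overline{V_i}},
\]
for uniquely determined integers $m_i$. Both sides of the claimed identity are $\mathbb{Z}$-linear in $\alpha$: the left-hand side $\chi(f^{-1}(a),\alpha)$ is linear in $\alpha$ by definition of the Euler integral, and the right-hand side is linear because $\eta(V_i,\alpha)$ is linear in $\alpha$ (being itself defined through Euler integrals, see Definition \ref{NMI}), while ${\rm B}_{f,a}^{\overline{V_i}}$ does not depend on $\alpha$. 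Hence it suffices to prove the identity when $\alpha = {\rm Eu}_{\overline{V_k}}$ for a fixed $k$.

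For $\alpha = {\rm Eu}_{\overline{V_k}}$, the right-hand side collapses: by the defining property of the Euler obstruction recalled in Section 2, $\eta(V_i, {\rm Eu}_{\overline{V_k}}) = 1$ if $V_i = V_k$ and $0$ otherwise. So the right-hand side equals ${\rm B}_{f,a}^{\overline{V_k}}$. On the other hand, since $\overline{V_k} \subset X$ is a closed equidimensional algebraic subset stratified by the strata of $\mathcal{V}$ contained in it, and $f$ restricts to a polynomial on $\overline{V_k}$, the global Brasselet number ${\rm B}_{f,a}^{\overline{V_k}}$ is by definition $\chi(f^{-1}(a) \cap \overline{V_k}, {\rm Eu}_{\overline{V_k}})$. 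Finally, the constructible function ${\rm Eu}_{\overline{V_k}}$ on $X$ is supported on $\overline{V_k}$ and agrees there with the Euler obstruction of $\overline{V_k}$ as a variety in its own right, so $\chi(f^{-1}(a), {\rm Eu}_{\overline{V_k}}) = \chi(f^{-1}(a) \cap \overline{V_k}, {\rm Eu}_{\overline{V_k}}) = {\rm B}_{f,a}^{\overline{V_k}}$. This gives the identity for each basis element, and linearity finishes the proof.

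The only genuine subtlety I expect is bookkeeping about the stratifications: one must make sure that $\overline{V_k}$ with the induced stratification $\{V_j : V_j \subseteq \overline{V_k}\}$ really satisfies the running hypotheses needed for ${\rm B}_{f,a}^{\overline{V_k}}$ to be defined (equidimensionality of $\overline{V_k}$, connectedness and reducedness of the relevant closures, finiteness of critical points and of asymptotic non-$\rho_E$-regular values of $f_{|\overline{V_k}}$). The finiteness statements were already dealt with in the lemma at the start of Section 4 and in the corollaries of Section 3, and the fact that ${\rm B}_{f,a}^{\overline{V_k}}$ is well defined (independent of the auxiliary choices) was established there as well; so this is really a matter of invoking those results rather than proving anything new. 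Once that is in place, the argument is purely formal: expansion in the Euler obstruction basis plus the normal Morse index orthogonality relations $\eta(V_i,{\rm Eu}_{\overline{V_k}}) = \delta_{ik}$.
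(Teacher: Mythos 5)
Your proof is correct, but it takes a genuinely different route from the paper's. You expand $\alpha$ in the basis $\{{\rm Eu}_{\overline{V_i}}\}$ and use the orthogonality relations $\eta(V_i,{\rm Eu}_{\overline{V_k}})=\delta_{ik}$, so that for each basis element both sides collapse to ${\rm B}_{f,a}^{\overline{V_k}}=\chi\bigl(f^{-1}(a)\cap\overline{V_k},{\rm Eu}_{\overline{V_k}}\bigr)$ essentially by definition; the formula for general $\alpha$ then follows by linearity. The paper instead works in the other distinguished basis: it proves the case $\alpha={\bf 1}_X$ directly, by applying Tib\u{a}r's global index formula to the fibre $f^{-1}(a)$, converting the resulting global Euler obstructions of strata of $f^{-1}(a)$ into the numbers ${\rm B}_{f,a}^{\overline{V_i}}$ via Proposition \ref{BrasseletNumberAndEulerObstruction}, and cancelling the contributions of the critical points through the local Brylinski--Dubson--Kashiwara formula; linearity is invoked at the end (implicitly, by running the same argument on each $\overline{V_k}$ to handle the basis $\{{\bf 1}_{\overline{V_k}}\}$). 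Your argument is shorter and, on the linearity step, arguably tighter, and it is exactly the two-bases mechanism the paper itself uses for Corollary \ref{BDKglobal2} and for the identity $\chi(X,\alpha)=\sum_i{\rm Eu}(\overline{V_i})\,\eta(V_i,\alpha)$. What it does not give you directly is the geometric content of the paper's computation: in your approach the nontrivial-looking case $\alpha={\bf 1}_X$, which ties the corollary to the Euler obstructions of the fibre and of the critical points, comes out as a consequence rather than as the engine of the proof. One small remark: the finiteness hypotheses you worry about at the end are not actually needed for ${\rm B}_{f,a}^{\overline{V_k}}$ to be defined, since it is just an Euler integral of a fixed constructible function over a fixed algebraic set; the only point to check is that ${\rm Eu}_{\overline{V_k}}$, extended by zero off $\overline{V_k}$, is constructible with respect to $\mathcal{V}$, which holds because $\overline{V_k}$ is a closed union of strata and the local Euler obstruction is constant along strata of a Whitney stratification.
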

\proof We keep the notations of the previous proposition and apply Equality (0.2) of \cite{Tibar2004} to get 
$$\displaylines{
\quad \chi(f^{-1}(a)) = \sum_{i \ \vert \ \Gamma_i = \emptyset} {\rm Eu}(\overline{V_i \cap f^{-1}(a)}) \Big(1-\chi( \mathcal{L}^{f^{-1}(a)}_{V_i \cap f^{-1}(a)}) \Big) \hfill \cr
\qquad \qquad  + \sum_{i \ \vert \ \Gamma_i \not= \emptyset} {\rm Eu}(\overline{V_i \cap f^{-1}(a) \setminus \Gamma_i}) \Big(1-\chi( \mathcal{L}^{f^{-1}(a)}_{V_i \cap f^{-1}(a)\setminus \Gamma_i})\Big) \hfill \cr
\qquad \qquad  \qquad \qquad + \sum_{i \ \vert \ \Gamma_i \not= \emptyset} \sum_{q \in \Gamma_i} 1-\chi(\mathcal{L}_{\{q\}}^{f^{-1}(a)} ) \hfill \cr
}$$
$$\displaylines{
\qquad \qquad \qquad \qquad  = \sum_{i \ \vert \ \Gamma_i \not= V_i} {\rm Eu}(\overline{V_i} \cap f^{-1}(a))  \Big(1-\chi( \mathcal{L}^{f^{-1}(a)}_{V_i \cap f^{-1}(a)\setminus \Gamma_i})\Big) \hfill \cr
\qquad \qquad  \qquad \qquad  \qquad \qquad + \sum_{i \ \vert \ \Gamma_i \not= \emptyset} \sum_{q \in \Gamma_i} 1-\chi(\mathcal{L}_{\{q\}}^{f^{-1}(a)}) , \hfill \cr
}$$
because $\Gamma_i=V_i$ if and only if $V_i$ is just a $0$-dimensional stratum and in this case, $V_i \cap f^{-1}(a) \setminus \Gamma_i = \emptyset$. 
By the previous proposition, we obtain the equality 
$$\displaylines{
\quad \chi(f^{-1}(a)) =  \sum_{i \ \vert \ \Gamma_i \not= V_i}  \hfill \cr
\hfill  \left[ {\rm B}_{f,a}^{\overline{V_i}} -\sum_{j \ \vert \ f(q_j)=a} -{\rm Eu}_{\overline{V_i}} (q_j) + {\rm Eu}_{\overline{V_i} \cap f^{-1}(a)} (q_j) \right]  \left(1-
\chi( \mathcal{L}^{f^{-1}(a)}_{V_i \cap f^{-1}(a)\setminus \Gamma_i})\right) \hfill \cr 
\qquad \qquad + \sum_{i \ \vert \ \Gamma_i \not= \emptyset} \sum_{q \in \Gamma_i} 1-\chi(\mathcal{L}_{\{q\}}^{f^{-1}(a)}), \hfill \cr
}$$
that we  rewrite 
$$\displaylines{
\quad \chi(f^{-1}(a)) = \sum_{i \ \vert \ \Gamma_i \not= V_i} \hfill \cr
\hfill \left[ {\rm B}_{f,a}^{\overline{V_i}} -\sum_{j \ \vert \ f(q_j)=a} -{\rm Eu}_{\overline{V_i}} (q_j) + {\rm Eu}_{\overline{V_i} \cap f^{-1}(a)} (q_j) \right]  \left(1-
\chi( \mathcal{L}^{f^{-1}(a)}_{V_i \cap f^{-1}(a)\setminus \Gamma_i})\right) \hfill \cr
\qquad \qquad + \sum_{j  \ \vert \ f(q_j)=a} 1-\chi(\mathcal{L}_{\{q_j\}}^{f^{-1}(a)}). \hfill \cr
}$$
If $\Gamma_i \not= V_i$ then $f^{-1}(a)$ intersects $V_i \setminus \Gamma_i$ transversally and so 
$$\chi( \mathcal{L}^{f^{-1}(a)}_{V_i \cap f^{-1}(a)\setminus \Gamma_i}) = \chi(\mathcal{L}_{V_i}^X).$$  Hence we have
$$\displaylines{
\ \chi(f^{-1}(a)) = \sum_{i \ \vert \ \Gamma_i \not= V_i} {\rm B}_{f,a}^{\overline{V_i}} \Big(1-\chi(\mathcal{L}_{V_i}^X)\Big) 
+ \sum_{j \ \vert \ f(q_j)=a }  \Big[ 1- \chi(\mathcal{L}_{\{q_j\}}^{f^{-1}(a)} ) \hfill \cr
\hfill + \sum_{i \ \vert \Gamma_i \not= V_i} -{\rm Eu}_{\overline{V_i}} (q_j) \Big(1-\chi(\mathcal{L}_{V_i}^X)\Big) 
+ {\rm Eu}_{\overline{V_i} \cap f^{-1}(a)} (q_j) \left(1-\chi(\mathcal{L}_{V_i \cap f^{-1}(a) \setminus \Gamma_i}^{f^{-1}(a)})\right) \Big]. \quad \cr
}$$
Let us evaluate the second part of this sum and fix $q$ a critical point of $f$ such that $f(q)=a$. Two cases are possible.

If $q$ belongs to a stratum $V_k$ with $\Gamma_k \not= V_k$ then we add the stratum $V_0 =\{q\}$ to the Whitney stratification of $X$. By  the Brylinski, Dubson and Kashiwara index formula (\cite{BrylinskiDubsonKashiwara} or \cite{Schu}, p294), we know that 
$$ 1 = \sum_{i=0}^t {\rm Eu}_{\overline{V_i}} (q) \Big(1-\chi(\mathcal{L}_{V_i}^X)\Big),$$
and so 
$$1= \sum_{i \ \vert \ \Gamma_i \not= V_i} {\rm Eu}_{\overline{V_i}} (q) \Big(1-\chi(\mathcal{L}_{V_i}^X) \Big) + {\rm Eu}_{V_0}(q) \Big(1-\chi(\mathcal{L}_{V_0}^X)\Big),$$
because $q \notin \overline{V_i}$ if $\Gamma_i = V_i$ ($i \ge 1$). But ${\rm Eu}_{V_0}(q)=1$ and $\chi(\mathcal{L}_{V_0}^X)=1$ because a generic linear form is a stratified submersion at $q$ (see \cite{DutertreGrulhaJofSing}, p90 for details). The same index formula applied to $f^{-1}(a)$ gives 
$$ 1= \sum_{i=1}^t {\rm Eu}_{\overline{V_i \cap f^{-1}(a) \setminus \Gamma_i}} (q) \Big(1-\chi( \mathcal{L}^{f^{-1}(a)}_{V_i \cap f^{-1}(a)\setminus \Gamma_i})\Big) + 1- \chi(\mathcal{L}_{\{q\}}^{f^{-1}(a)}).$$
Therefore we get that 
$$\chi(\mathcal{L}_{\{q\}}^{f^{-1}(a)}) = \sum_{i \ \vert \ \Gamma_i \not= V_i} {\rm Eu}_{\overline{V_i} \cap f^{-1}(a)} (q) \Big(1-\chi( \mathcal{L}^{f^{-1}(a)}_{V_i \cap f^{-1}(a)\setminus \Gamma_i})\Big),$$
and so the contribution of $q$ in the second summand of the above sum is zero.  

If $q$ belongs to a stratum $V_k$ with $\Gamma_k=V_k$ then, actually $V_k = \{ q \}$. By the index formula and the same arguments, we find that 
$$1= \sum_{i \ \vert \ \Gamma_i \not= V_i} {\rm Eu}_{\overline{V_i}} (q) \Big(1-\chi(\mathcal{L}_{V_i}^X)\Big) + 1-\chi(\mathcal{L}^X_{\{q\}}),$$
and 
$$1= \sum_{i \ \vert \ \Gamma_i \not= V_i} {\rm Eu}_{\overline{V_i} \cap f^{-1}(a)} (q) \Big(1-\chi( \mathcal{L}^{f^{-1}(a)}_{V_i \cap f^{-1}(a)\setminus \Gamma_i}) \Big) +1-\chi(\mathcal{L}_{\{q\}}^{f^{-1}(a)}).$$
Hence the contribution of $q$ in the above second summand is $1-\chi (\mathcal{L}^X_{\{q\}})$, which we can write ${\rm B}_{f,a}^{\overline{V_k}} \Big(1-\chi (\mathcal{L}_{V_k}^X)\Big)$. Finally we have proved that
$$\chi (f^{-1}(a)) = \sum_{i=1}^t {\rm B}_{f,a}^{\overline{V_i}} \Big(1-\chi (\mathcal{L}_{V_i}^X)\Big),$$
and the theorem follows because both sides of the equality are linear in $\alpha$.
\endproof

\section{Global Brasselet numbers and critical points}
In this section, we prove several formulas that relate the number of critical points of a Morsefication of a polynomial function $f$ on an algebraic set $X$, to the global Brasselet numbers and the Brasselet numbers at infinity of $f$. We note that when $X=\mathbb{C}^n$, similar formulas have already appeared in the literature (\cite{Bro,HaLe,Suzuki,TibarIMRN98,Tibar,SiersmaTibar,Parusinski,ArtalLuengoMelle}).

The setting is the same as in the previous section:
 $X \subset \mathbb{C}^n$ is a reduced algebraic set of dimension $d$, equipped with a finite Whitney stratification $\mathcal{V}= \{V_i\}_{i=1}^t$ such that $V_1,\ldots,V_{t-1}$ are connected, $\overline{V_1},\ldots,\overline{V_t}$ are reduced and $V_t=X_{\rm reg}$ ; $f : X \rightarrow \mathbb{C}$ is  a complex polynomial, restriction to $X$ of a polynomial function $F : \mathbb{C}^n \rightarrow \mathbb{C}$.  We assume that $f$ has a finite number of critical points $q_1,\ldots,q_s$ and we denote by $\{a_1,\ldots,a_r\}$ the set of stratified asymptotic non-$\rho_E$-regular values of $f$.

\begin{definition}\label{morsefication}
{\rm We say that $\tilde{f} : X \rightarrow \mathbb{C}$ is a Morsefication of $f$ if $\tilde{f}$ is a small deformation of $f$ which is a local (stratified) Morsefication at all isolated critical points of $f$.}
\end{definition}

Let $\tilde{f}$ be a Morsefication of $f$. As in the local, we can take $\tilde{f}$ of the form $f+tl$ where $t$ is a sufficiently small complex number and $l$ is the restriction to $X$ of a generic linear form (see Theorem 2.2 in \cite{Le2}). Note that $\tilde{f}$ has two kinds of critical points: those appearing in a small neighborhood of a critical point of $f$ and those appearing at infinity, i.e., outside a ball of sufficiently big radius. We will only consider the first ones.

Let $n_i$, $i=1,\ldots,t$, be the number of critical points of $\tilde{f}$ appearing in a small neighborhood of a critical point of $f$ on the stratum $V_i$. Note that $$n_i \ge \mu^T(f_{\vert V_i}) = \sum_{j \ \vert \ q_j \in V_i} \mu(f_{\vert V_i}, q_j),$$ where $\mu(f_{\vert V_i}, q_j)$ is the Milnor number of $f_{\vert V_i}$ at $q_j$, since we do not assume that $f$ is general with respect to $\mathcal{V}$.

The following theorem relates the number of stratified critical points of $\tilde{f}$ appearing on the stratum $V_i$ to the topology of $X$ and a generic fibre of $f$.
\begin{theorem}\label{TopologyXgenericfibre}
Let $c \in \mathbb{C}$ be a regular value of $f$, which is not a stratified asymptotic non-$\rho_E$-regular value. We have
$$\chi(X)-\chi(f^{-1}(c)) = \sum_{i=1}^t (-1)^{d_i} n_i \Big(1-\chi(\mathcal{L}_{V_i}^X)\Big) -\lambda_f^{X,\infty}.$$
Moreover if $f$ is general with respect to $\mathcal{V}$, then we have
$$\chi(X)-\chi(f^{-1}(c)) = \sum_{i=1}^t (-1)^{d_i} \mu^T(f_{\vert V_i}) \Big(1-\chi(\mathcal{L}_{V_i}^X)\Big) -\lambda_f^{X,\infty}.$$
\end{theorem}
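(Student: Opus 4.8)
The plan is to compute $\chi(X)$ by slicing it with the polynomial $\tilde f$ (a Morsefication of $f$ of the form $f + tl$), tracking how the topology of the fibres changes as we pass through the finitely many critical values and asymptotic non-regular values, and then comparing the result with the same computation for $f$ itself. First I would fix a regular value $c$ of $f$ which is also not a stratified asymptotic non-$\rho_E$-regular value, and choose a big ball $B_R$ such that $f^{-1}(c) \cap B_R$ is a deformation retract of $f^{-1}(c)$, so $\chi(f^{-1}(c)) = \chi(f^{-1}(c) \cap B_R)$; by stratified $\rho_E$-regularity at $c$, nearby fibres behave the same way. The key device is the standard "handle-attachment" description of how a stratified Morse function changes the fibre: when $\tilde f$ passes a stratified Morse critical point lying on a stratum $V_i$ of dimension $d_i$, the local change in $\chi$ of the (local) fibre is controlled by the normal Morse datum, and equals $(-1)^{d_i}(1 - \chi(\mathcal{L}_{V_i}^X))$ (this is exactly the complex stratified Morse theory computation behind the index identities for ${\rm Eu}$ recalled in Section 2). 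Summing over all $n_i$ critical points of $\tilde f$ that appear near the critical points of $f$ gives the total contribution $\sum_{i=1}^t (-1)^{d_i} n_i (1 - \chi(\mathcal{L}_{V_i}^X))$ to $\chi(X) - \chi(\text{generic fibre of }\tilde f)$, restricted to a large ball.

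Next I would handle the two remaining sources of discrepancy: the behavior at infinity, and the fact that $\tilde f$ is a deformation of $f$, not $f$ itself. Since $\tilde f$ is a small deformation, its generic fibre inside $B_R$ is homeomorphic to the generic fibre of $f$ inside $B_R$ (the deformation is a locally trivial fibration away from the critical values, by the Milnor-Lê fibration arguments already invoked in Section 4), so passing from $\tilde f$ back to $f$ changes nothing for the bounded picture. The genuinely global correction comes from the part outside $B_R$: by the additivity argument and the vanishing $\chi(f^{-1}(c) \cap S_R) = 0$ used repeatedly in Section 4, one has $\chi(f^{-1}(c)) = \chi(f^{-1}(c)\cap B_R) + \chi(f^{-1}(c)\cap\{\rho_E\ge R\})$, and the second term, summed over all $c$, is precisely how $\lambda_f^{X,\infty}$ was defined. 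So the exterior contributions at all values $c$ assemble into the single term $-\lambda_f^{X,\infty}$, and I would make this precise by running the same fibration/retraction bookkeeping as in the paragraphs preceding the definition of ${\rm B}_{f,a}^{X,\infty}(\alpha)$, being careful that the critical points of $\tilde f$ "at infinity" (which the statement explicitly excludes from the $n_i$) are accounted for exactly by $\lambda_f^{X,\infty}$.

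Assembling these pieces yields
$$\chi(X) - \chi(f^{-1}(c)) = \sum_{i=1}^t (-1)^{d_i} n_i \Big(1 - \chi(\mathcal{L}_{V_i}^X)\Big) - \lambda_f^{X,\infty}.$$
For the second formula, I would simply note that if $f$ is general with respect to $\mathcal{V}$, then at each critical point $q_j \in V_i$ the covector $DF(q_j)$ is non-degenerate, so a local Morsefication of $f_{\vert V_i}$ near $q_j$ has exactly $\mu(f_{\vert V_i}, q_j)$ Morse points on $V_i$, and moreover no extra critical points on $V_i$ are created elsewhere; hence $n_i = \mu^T(f_{\vert V_i})$ and the second equality follows from the first. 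The main obstacle I anticipate is the careful justification that the local contribution of a stratified Morse point on $V_i$ to the change in Euler characteristic of the fibre is exactly $(-1)^{d_i}(1 - \chi(\mathcal{L}_{V_i}^X))$ — this requires invoking the structure of the complex Morse datum (tangential part, a complex cell of dimension $d_i$, times the normal Morse datum) from Goresky–MacPherson and checking the signs and the interplay with the stratification of the fibre — together with the bookkeeping that ensures the "at infinity" critical points of $\tilde f$ contribute precisely $\lambda_f^{X,\infty}$ and are not double-counted.
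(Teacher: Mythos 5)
Your local ingredient is correct and is exactly what the paper uses: the identity $1-\chi\big(f^{-1}(\tilde q_j)\cap B_\epsilon(q_j)\big)=\sum_i(-1)^{d_i}n_{ij}\big(1-\chi(\mathcal{L}_{V_i}^X)\big)$ (Massey, Theorem 3.2 of \cite{MasseyTopology96}, i.e.\ the complex stratified Morse computation with the normal Morse data), and your reduction of the second formula to $n_i=\mu^T(f_{\vert V_i})$ when $f$ is general is the intended one. The gap is in your global assembly. You propose to sweep $X$ with the \emph{global} Morsefication $\tilde f=f+tl$ and then claim that the critical points of $\tilde f$ at infinity ``are accounted for exactly by $\lambda_f^{X,\infty}$.'' This is precisely the step that needs proof and that your argument does not supply: $\lambda_f^{X,\infty}$ is defined from the fibres of $f$ (as $\sum_a\lim_{c\to a}\chi(f^{-1}(c)\cap\{\rho_E\ge R_a\})$, a sum over the atypical values of $f$), whereas a linear perturbation $f+tl$ can change the set of asymptotically non-regular values and the behaviour of the fibres near $H^\infty$; there is no a priori identification of the at-infinity defect of $\tilde f$ with that of $f$. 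Relatedly, your sentence ``the exterior contributions at all values $c$ assemble into $-\lambda_f^{X,\infty}$'' misdescribes where this term comes from: for the generic value $c$ itself one has $\chi(f^{-1}(c)\cap\{\rho_E\ge R\})=0$ by stratified $\rho_E$-regularity; the term $\lambda_{f,a}^{X,\infty}$ is the jump $\chi(f^{-1}(c))-\chi(f^{-1}(c)\cap B_{R_a})$ recorded at each \emph{atypical} value $a$, for $c$ near $a$.

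The paper avoids your difficulty by never sweeping with $\tilde f$ globally. It integrates the constructible function $\varphi(x)=\chi\big(f^{-1}(\tilde x)\cap B_\epsilon(x)\big)$ over $X$ and applies Fubini for the Euler integral along $f$ itself; the fibrewise computation gives $\int_{f^{-1}(y)}\varphi\,d\chi_c=\chi(f^{-1}(\tilde y))-\lambda_{f,y}^{X,\infty}$, whence $\chi(X)-\chi(f^{-1}(c))=\sum_j\big(1-\varphi(q_j)\big)-\lambda_f^{X,\infty}$, and only \emph{then} is the Morsefication invoked, locally at each $q_j$, through Massey's theorem. So the at-infinity bookkeeping is done entirely for $f$, and the Morsefication enters only through a local statement where its at-infinity critical points are irrelevant. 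To repair your proof you should either adopt this order of operations, or else prove separately that for $t$ small the sum of the at-infinity defects of $f+tl$ over all its atypical values equals $\lambda_f^{X,\infty}$ — a nontrivial semicontinuity statement that the theorem itself essentially encodes.
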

\proof For $x \in X$, let $\varphi(x)=\chi (f^{-1}(\tilde{x}) \cap B_\epsilon(x))=\chi_c (f^{-1}(\tilde{x}) \cap B_\epsilon(x))$, where $\tilde{x}$ is a regular value of $f$ close to $f(x)$ and $0 \le \vert \tilde{x}-f(x) \vert \ll \epsilon \ll 1$. Note that $\varphi(x)=1$ if $x$ is not a critical point of $f$ and so $\varphi$ is a constructible function. By Fubini theorem, we have
$$\chi(X,\varphi)=\chi(\mathbb{C},f_* \varphi),$$
that we can rewrite 
$$\int_X \varphi(x) d \chi_c (x) = \int_{\mathbb{C}} \left[ \int_{f^{-1}(y)} \varphi(x) d\chi_c (x) \right] dy.$$
Let us compute the integral $\int_{f^{-1}(y)} \varphi(x) d\chi_c(x)$ for $y$ in $\mathbb{C}$. Let $\tilde{y}$ be  a regular value of $f$, which is not a stratified asymptotic non-$\rho_E$-regular value. Let $R_y \gg 1$ be such that $f^{-1}(y) \cap B_{R_y}$ is a retract by deformation of $f^{-1}(y)$ and let us denote by $z_1,\ldots,z_l$ the critical points of $f$ in $f^{-1}(y)$. On the one hand, we have
$$\displaylines{
\quad \chi(f^{-1}(\tilde{y}) \cap B_{R_y})= \chi \left( f^{-1}(\tilde{y}) \cap B_{R_y} \setminus (\cup_{i=1}^l \mathring{B}_\epsilon(z_i)) \right) 
 + \sum_{i=1}^l \chi (f^{-1}(\tilde{y}) \cap B_\epsilon(z_i))   \cr
 \hfill =\chi_c \left( f^{-1}(y) \cap B_{R_y} \setminus (\cup_{i=1}^l B_\epsilon(z_i)) \right) + \sum_{i=1}^l  \varphi (z_i) \quad \quad  \quad \cr  
= \chi_c \left( f^{-1}(y) \cap B_{R_y} \setminus \{z_1,\ldots,z_l \} \right) + \sum_{i=1}^l  \varphi (z_i) = \int_{f^{-1}(y) \cap B_{R_y}} \varphi(x) d \chi_c (x). \cr
}$$ 
On the other hand, we have 
$$\displaylines{
\quad \int_{f^{-1}(y) \cap B_{R_y}} \varphi(x) d \chi_c (x)= \int_{f^{-1}(y)} \varphi(x) d \chi_c(x) - \int_{f^{-1}(y) \cap \{ \rho_E > R_y \} }\varphi(x) d \chi_c (x) \quad \quad \cr
\hfill =\int_{f^{-1}(y)} \varphi(x) d \chi_c(x) - \chi_c (f^{-1}(y) \cap \{ \rho_E> R_y \} ). \quad \cr
}$$
But the function $f^{-1}(y) \cap \{ \rho_E > R_y \} \rightarrow ]R_y, + \infty[$, $x \mapsto \rho_E(x) $ is a proper stratified submersion, so
$$\chi_c(f^{-1}(y) \cap \{ \rho_E > R_y \})= \chi_c(f^{-1}(y) \cap S_R) \times \chi_c (]R_y,+\infty [),$$
where $R \in ]R_y,+\infty[$. Since $\chi_c (f^{-1}(y) \cap S_R)=0$, we find that
$$\displaylines{
\quad \int_{f^{-1}(y)} \varphi(x) d \chi_c(x) = \chi (f^{-1}(\tilde{y}) \cap B_{R_y}) \hfill \cr  
\hfill =\chi_c(f^{-1}(\tilde{y}) \cap B_{R_{\tilde{y}}})) - \chi_c (f^{-1}(\tilde{y}) \cap \{ R_y < \rho_E \le R_{\tilde{y}} \}) \quad \quad \cr
\hfill = \chi (f^{-1}(\tilde{y}) \cap B_{R_{\tilde{y}}})) - \chi (f^{-1}(\tilde{y}) \cap \{ R_y \le \rho_E \le R_{\tilde{y}} \}) \quad \quad \cr
\hfill =\chi(f^{-1}(\tilde{y})) -\lambda_{f,y}^{X,\infty}, \quad \cr
}$$
because $\chi (f^{-1}(\tilde{y}) \cap \{ R_y \le \rho_E \le R_{\tilde{y}} \})=
\chi (f^{-1}(\tilde{y}) \cap \{ R_y \le \rho_E \})$.
Therefore, we get
$$\displaylines{
\quad \int_X \varphi(x) d \chi_c (x)= \chi(X)-\sum_{j=1}^s 1-\varphi(q_j) = \int_{\mathbb{C}} \chi(f^{-1}(\tilde{y})) dy - \int_{\mathbb{C}} \lambda_{f,y}^{X,\infty} dy \hfill \cr
\hfill = \chi(f^{-1}(c)) -\sum_{j=1}^r \lambda_{f,a_j}^{X,\infty} = \chi(f^{-1}(c)) - \lambda_f^{X,\infty}, \quad \cr
}$$
and so,
$$\chi(X)-\chi(f^{-1}(c))= \sum_{j=1}^s 1-\chi(f^{-1}(\tilde{q_j}) \cap B_\epsilon(q_j)) - \lambda_f^{X,\infty}.$$
By Theorem 3.2 in \cite{MasseyTopology96} applied to the sheaf $\mathbb{Z}_X^{\bullet}$, we know that
$$1-\chi(f^{-1}(\tilde{q_j}) \cap B_\epsilon(q_j)) = \sum_{i=1}^t (-1)^{d_i} n_{ij} (1-\chi(\mathcal{L}_{V_i}^X)), \eqno(*)$$
where $n_{ij}$ is the number of critical points of a Morsefication of $f$ that lie on $V_i$ in a small neighborhood of $q_j$. Summing over all the critical points of $f$, we obtain the result.
\endproof

\begin{corollary}\label{TopologyXgenericfibreCons}
Let $\alpha : X \rightarrow \mathbb{Z}$ be a constructible function with respect to $\mathcal{V}$ and let $c \in \mathbb{C}$ be  a regular value of $f$, which is not a stratified asymptotic non-$\rho_E$-regular value. We have
$$\chi(X,\alpha) - \chi(f^{-1}(c),\alpha) = \sum_{i=1}^t (-1)^{d_i} n_i \eta(V_i,\alpha) - {\rm B}_f^{X,\infty}(\alpha).$$
Moreover if $f$ is general with respect to $\mathcal{V}$, then we have
$$\chi(X,\alpha)-\chi(f^{-1}(c),\alpha) =\sum_{i=1}^t (-1)^{d_i} \mu^T (f_{\vert V_i}) \eta(V_i,\alpha) - {\rm B}_f^{X,\infty}(\alpha).$$
\end{corollary}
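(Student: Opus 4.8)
The plan is to deduce the corollary from Theorem \ref{TopologyXgenericfibre} by linearity, exactly as the theorem itself was deduced from the special case $\alpha = {\bf 1}_X$. Since every constructible function $\alpha$ with respect to $\mathcal{V}$ can be written as a $\mathbb{Z}$-linear combination $\alpha = \sum_{i=1}^t m_i {\bf 1}_{\overline{V_i}}$ (the characteristic functions of the closures of strata form a basis of the group of $\mathcal{V}$-constructible functions, as recalled in Section 2), and since all four quantities appearing in the statement — $\chi(X,\alpha)$, $\chi(f^{-1}(c),\alpha)$, $\eta(V_i,\alpha)$, and ${\rm B}_f^{X,\infty}(\alpha)$ — are linear in $\alpha$, it suffices to prove the formula when $\alpha = {\bf 1}_{\overline{V_k}}$ for each $k$. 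In that case $\chi(X,\alpha) = \chi(\overline{V_k})$ and $\chi(f^{-1}(c),\alpha) = \chi(\overline{V_k}\cap f^{-1}(c))$.

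First I would apply Theorem \ref{TopologyXgenericfibre} to the algebraic set $\overline{V_k}$ in place of $X$, equipped with the induced Whitney stratification by those strata $V_j$ with $V_j \subset \overline{V_k}$, and with the restriction $f_k = F_{\vert \overline{V_k}}$ in place of $f$. One must check that the hypotheses transfer: $\overline{V_k}$ is reduced by assumption, $f_k$ has finitely many critical points (a subset of $\{q_1,\ldots,q_s\}$), and a regular value $c$ of $f$ that is not a stratified asymptotic non-$\rho_E$-regular value of $f$ is also such a value for $f_k$ — this is essentially the content of the argument used in the proof of Lemma 4.1 for $\bar V$ and $\bar V \setminus V$. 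This gives
$$\chi(\overline{V_k}) - \chi(\overline{V_k}\cap f^{-1}(c)) = \sum_{j \ \vert \ V_j \subset \overline{V_k}} (-1)^{d_j} n_j^{(k)} \Big(1-\chi(\mathcal{L}_{V_j}^{\overline{V_k}})\Big) - \lambda_{f_k}^{\overline{V_k},\infty},$$
where $n_j^{(k)}$ counts the critical points on $V_j$ of a Morsefication of $f_k$. Then I would recognize the right-hand side in terms of the quantities attached to $X$: by the definition of the normal Morse index, $\eta(V_j,{\bf 1}_{\overline{V_k}}) = 1 - \chi(\mathcal{L}_{V_j}^X \cap \overline{V_k}) = 1 - \chi(\mathcal{L}_{V_j}^{\overline{V_k}})$ when $V_j \subset \overline{V_k}$, and $\eta(V_j,{\bf 1}_{\overline{V_k}}) = 0$ otherwise (since then $V_j \not\subset \overline{V_k}$ forces the complex link intersection to be that of a point, or the stratum does not meet $\overline{V_k}$ appropriately); similarly ${\rm B}_f^{X,\infty}({\bf 1}_{\overline{V_k}}) = \lambda_{f_k}^{\overline{V_k},\infty}$ directly from the definition of ${\rm B}_f^{X,\infty}(\alpha)$ as $\sum_c \lim_{c'\to c}\chi(\overline{V_k}\cap f^{-1}(c')\cap\{\rho_E\ge R_c\})$.

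The one point that needs genuine care — and which I expect to be the main obstacle — is matching the Morse-point counts: I must verify that $n_j^{(k)}$, the number of critical points of a Morsefication of $f_{\vert\overline{V_k}}$ lying on $V_j$, coincides with $n_j$ when $V_j \subset \overline{V_k}$. This is plausible because a Morsefication can be taken of the form $f + t\ell$ with $\ell$ the restriction of a generic linear form, and such a deformation restricts to a Morsefication of $f_{\vert\overline{V_k}}$; the critical points on the stratum $V_j$ depend only on the behaviour of $F + t\ell$ along $V_j$, which is the same whether we work inside $X$ or inside $\overline{V_k}$. Once this identification is in hand, substituting $n_j^{(k)} = n_j$ and $\mathcal{L}_{V_j}^{\overline{V_k}}$-Euler-characteristics by $\eta(V_j,{\bf 1}_{\overline{V_k}})$, and extending the sum over all $i\in\{1,\ldots,t\}$ (the extra terms vanish because $\eta(V_i,{\bf 1}_{\overline{V_k}})=0$ for $V_i\not\subset\overline{V_k}$), yields the first formula for $\alpha={\bf 1}_{\overline{V_k}}$, hence for all $\alpha$ by linearity. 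The second formula follows in the same way from the second formula of Theorem \ref{TopologyXgenericfibre}, using that $f$ general with respect to $\mathcal{V}$ implies $f_k$ general with respect to the induced stratification and that $\mu^T(f_{\vert V_j})$ is intrinsic to the stratum $V_j$.
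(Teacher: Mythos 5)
Your proposal is correct and follows essentially the same route as the paper: the paper's own proof is exactly the one-line observation that Theorem \ref{TopologyXgenericfibre} applied to each $\overline{V_k}$ gives the statement for $\alpha={\bf 1}_{\overline{V_k}}$, after which linearity in $\alpha$ does the rest. The details you supply --- the identifications $\eta(V_j,{\bf 1}_{\overline{V_k}})=1-\chi(\mathcal{L}_{V_j}^{X}\cap\overline{V_k})$, ${\rm B}_f^{X,\infty}({\bf 1}_{\overline{V_k}})=\lambda_{f}^{\overline{V_k},\infty}$, the transfer of the regularity hypotheses to $\overline{V_k}$, and the matching of the Morse-point counts $n_j^{(k)}=n_j$ --- are exactly what the paper leaves implicit, and they check out.
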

\proof By the previous theorem, the result is true for $\alpha={\bf 1}_{\overline{V_i}}$. Since both sides of the equality are linear in $\alpha$, we see that the result is valid for any constructible function $\alpha$. \endproof
If $X$ is equidimensional then by \cite{SeadeTibarVerjovsky1}, Proposition 2.3, the term $(-1)^{d_i}n_{ij}$ that appears in Equality $(*)$ is equal to ${\rm Eu}_{f,\overline{V_i}}(q_j)$. Hence the above corollary can be refined.
\begin{corollary}\label{TopologyXgenericfibreEqui0}
Assume that $X$ is equidimensional. Let $\alpha : X \rightarrow \mathbb{Z}$ be a constructible function with respect to $\mathcal{V}$ and let $c \in \mathbb{C}$ be  a regular value of $f$, which is not a stratified asymptotic non-$\rho_E$-regular value. We have
$$\chi(X,\alpha) - \chi(f^{-1}(c),\alpha) = \sum_{i=1}^t (-1)^{d_i} n_i \eta(V_i,\alpha) - {\rm B}_f^{X,\infty}(\alpha)=$$
$$ \sum_{i=1}^t \left(\sum_{j=1}^q  {\rm Eu}_{f,\overline{V_i}}(q_j) \right) \eta(V_i,\alpha) - {\rm B}_f^{X,\infty}(\alpha).$$
Moreover if $f$ is general with respect to $\mathcal{V}$, then we have
$$\chi(X,\alpha)-\chi(f^{-1}(c),\alpha) =\sum_{i=1}^t (-1)^{d_i} \mu^T (f_{\vert V_i}) \eta(V_i,\alpha) - {\rm B}_f^{X,\infty}(\alpha).$$
\end{corollary}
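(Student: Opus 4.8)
The plan is to deduce this statement directly from Corollary \ref{TopologyXgenericfibreCons} by re-expressing the local splitting numbers $n_i$ in terms of Euler obstructions of functions, using the equidimensionality of $X$. The first displayed equality is exactly the conclusion of Corollary \ref{TopologyXgenericfibreCons}, so there is nothing new to prove there; and the last displayed equality, valid when $f$ is general with respect to $\mathcal V$, is again identical to the corresponding line of Corollary \ref{TopologyXgenericfibreCons}, since in that case $n_i=\mu^T(f_{\vert V_i})$ by the discussion preceding Theorem \ref{TopologyXgenericfibre}. Hence the only thing to establish is the middle equality, i.e. that for every $i$,
$$(-1)^{d_i}n_i=\sum_{j=1}^s{\rm Eu}_{f,\overline{V_i}}(q_j).$$

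To prove this identity I would proceed as follows. First recall, from the proof of Theorem \ref{TopologyXgenericfibre}, that $n_i=\sum_{j=1}^s n_{ij}$, where $n_{ij}$ is the number of critical points of the Morsefication $\tilde f=f+tl$ lying on the stratum $V_i$ in a small ball about the critical point $q_j$ of $f$ (these $n_{ij}$ are the numbers occurring in Equality $(*)$ there, and their sum over $j$ is the full count of ``first kind'' critical points of $\tilde f$ on $V_i$). Next, since $X$ is equidimensional, each $\overline{V_i}$ is equidimensional of dimension $d_i$ and inherits from $\mathcal V$ a Whitney stratification with $V_i$ as its dense top-dimensional part, and the restriction $f_i=F_{\vert\overline{V_i}}$ has an isolated stratified singularity at each $q_j\in\overline{V_i}$ because $f$ has only finitely many critical points. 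Applying Proposition 2.3 of \cite{SeadeTibarVerjovsky1} — the stratified counterpart of Proposition \ref{Prop-n-reg} — to $f_i$ at $q_j$ yields $(-1)^{d_i}n_{ij}={\rm Eu}_{f,\overline{V_i}}(q_j)$, both sides being taken to be $0$ when $q_j\notin\overline{V_i}$ (equivalently, when no critical point of $\tilde f$ on $V_i$ collapses onto $q_j$). Summing over $j=1,\dots,s$ gives the displayed identity, and substituting it into the first equality of Corollary \ref{TopologyXgenericfibreCons} produces the middle expression; the general-$f$ case requires no separate treatment.

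The step I expect to be the main obstacle is the clean application of Proposition 2.3 of \cite{SeadeTibarVerjovsky1} to the possibly non-generic function $f_i$ on $\overline{V_i}$ at $q_j$: one must check that the count $n_{ij}$ of critical points of $\tilde f$ on $V_i$ near $q_j$ genuinely equals the number of Morse points on the regular part of $\overline{V_i}$ produced by a local stratified Morsefication of $f_i$, so that the quoted proposition does compute ${\rm Eu}_{f,\overline{V_i}}(q_j)$ even though $f$ need not be general with respect to $\mathcal V$; and one must fix the convention ${\rm Eu}_{f,\overline{V_i}}(q_j)=0$ for $q_j\notin\overline{V_i}$ so that the sum $\sum_{j=1}^s$ over all critical points of $f$ is meaningful. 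Once these points are settled, the proof is a one-line substitution into the already-established Corollary \ref{TopologyXgenericfibreCons}.
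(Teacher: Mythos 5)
Your proposal is correct and follows essentially the same route as the paper: the authors justify the corollary in one sentence by invoking Proposition 2.3 of \cite{SeadeTibarVerjovsky1} to identify $(-1)^{d_i}n_{ij}$ from Equality $(*)$ with ${\rm Eu}_{f,\overline{V_i}}(q_j)$ and then summing over $j$, which is exactly your middle-equality argument. Your additional care about the convention ${\rm Eu}_{f,\overline{V_i}}(q_j)=0$ for $q_j\notin\overline{V_i}$ and about matching $n_{ij}$ with the Morse count on the regular part of $\overline{V_i}$ only makes explicit what the paper leaves implicit.
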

An interesting application occurs when $\alpha={\rm Eu}_X$.
\begin{corollary}\label{TopologyXgenericfibreEqui}
Assume that $X$ is equidimensional. Let  $c \in \mathbb{C}$ be  a regular value of $f$, which is not a stratified asymptotic non-$\rho_E$-regular value. We have
$${\rm Eu}_{f,c}^X= {\rm Eu}(X) -{\rm B}_{f,c}^X = (-1)^d n_t -{\rm B}_f^{X,\infty}= \sum_{j=1}^s {\rm Eu}_{f,X}(q_j)-{\rm B}_f^{X,\infty}.$$
Moreover if $f$ is general with respect to $\mathcal{V}$, then we have
$${\rm Eu}_{f,c}^X= {\rm Eu}(X) -{\rm B}_{f,c}^X = \sum_{j \ \vert \ q_j \in X_{\rm reg}} (-1)^d \mu (f_{\vert X_{\rm reg}},q_j)-{\rm B}_f^{X,\infty}.$$
\end{corollary}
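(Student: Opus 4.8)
The plan is to derive Corollary~\ref{TopologyXgenericfibreEqui} directly from Corollary~\ref{TopologyXgenericfibreEqui0} by specializing the constructible function to $\alpha = {\rm Eu}_X$, and then to simplify the resulting sums using the defining property of the Euler obstruction, namely that $\eta(V_i, {\rm Eu}_X) = \eta(V_i, {\rm Eu}_{\overline{V_t}})$ equals $1$ when $V_i = V_t = X_{\rm reg}$ and $0$ otherwise. Concretely, I would first recall that ${\rm Eu}_X = {\rm Eu}_{\overline{V_t}}$ since $X$ is equidimensional and $V_t = X_{\rm reg}$, so that in the sum $\sum_{i=1}^t (-1)^{d_i} n_i \eta(V_i, {\rm Eu}_X) - {\rm B}_f^{X,\infty}({\rm Eu}_X)$ only the $i=t$ term survives, giving $(-1)^d n_t - {\rm B}_f^{X,\infty}$. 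The left-hand side $\chi(X, {\rm Eu}_X) - \chi(f^{-1}(c), {\rm Eu}_X)$ is exactly ${\rm Eu}(X) - {\rm B}_{f,c}^X$ by Remark~\ref{Remarque}(2) and the definition of ${\rm B}_{f,c}^X$, and this in turn equals ${\rm Eu}_{f,c}^X$ by definition.

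Next, for the chain of equalities, I would use the second description in Corollary~\ref{TopologyXgenericfibreEqui0}: the coefficient $(-1)^{d_i} n_i = \sum_j (-1)^{d_i} n_{ij}$ at $i=t$ is $\sum_{j=1}^s (-1)^d n_{tj} = \sum_{j=1}^s {\rm Eu}_{f,X}(q_j)$ by Proposition~\ref{Prop-n-reg} (the Seade--Tib\u{a}r--Verjovsky formula, since $n_{tj}$ is precisely the number of Morse points on $X_{\rm reg}$ near $q_j$), giving the third and fourth expressions in the first displayed equation. This handles the general case without any genericity hypothesis.

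For the ``moreover'' part, I would invoke the second formula in Corollary~\ref{TopologyXgenericfibreEqui0}, which under the genericity hypothesis replaces $n_i$ by $\mu^T(f_{\vert V_i})$; specializing to $\alpha = {\rm Eu}_X$ again kills all terms except $i=t$, yielding $(-1)^d \mu^T(f_{\vert X_{\rm reg}}) - {\rm B}_f^{X,\infty} = \sum_{j \mid q_j \in X_{\rm reg}} (-1)^d \mu(f_{\vert X_{\rm reg}}, q_j) - {\rm B}_f^{X,\infty}$, where I expand $\mu^T$ according to its definition as the sum of local Milnor numbers over the critical points of $f$ lying on $X_{\rm reg}$.

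I do not expect a serious obstacle here, since this corollary is a routine specialization; the only point requiring a word of care is to make sure that the normal Morse index identity $\eta(V_i, {\rm Eu}_{\overline{V_t}}) = \delta_{it}$ is being applied with $\overline{V_t} = X$ (which is legitimate precisely because $X$ is equidimensional with $V_t = X_{\rm reg}$), and that ${\rm B}_f^{X,\infty}({\rm Eu}_X)$ is by definition denoted ${\rm B}_f^{X,\infty}$, so no further evaluation of the term at infinity is needed. If anything is mildly delicate, it is bookkeeping: confirming that the ``$q$'' appearing as an upper summation limit in Corollary~\ref{TopologyXgenericfibreEqui0} is the number $s$ of critical points of $f$, so that $\sum_{j=1}^q {\rm Eu}_{f,\overline{V_t}}(q_j) = \sum_{j=1}^s {\rm Eu}_{f,X}(q_j)$ with the convention ${\rm Eu}_{f,X}(q_j) = 0$ when $q_j \notin X_{\rm reg}$.
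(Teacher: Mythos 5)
Your proof is correct and follows exactly the paper's argument: specialize Corollary \ref{TopologyXgenericfibreEqui0} to $\alpha = {\rm Eu}_X$, use $\eta(V_i,{\rm Eu}_X)=0$ for $V_i\neq V_t$ and $\eta(V_t,{\rm Eu}_X)=1$ together with Remark \ref{Remarque} and the definitions of ${\rm B}_{f,c}^X$ and ${\rm B}_f^{X,\infty}$, and identify $(-1)^d n_{tj}$ with ${\rm Eu}_{f,X}(q_j)$ via Proposition \ref{Prop-n-reg}. One small caveat: your closing ``convention'' that ${\rm Eu}_{f,X}(q_j)=0$ when $q_j\notin X_{\rm reg}$ is not correct in general (a critical point lying on a lower-dimensional stratum can still contribute Morse points on $X_{\rm reg}$ after Morsefication), but it is also not needed, since the sum $\sum_{j=1}^s {\rm Eu}_{f,X}(q_j)$ runs over all critical points and equals $(-1)^d n_t$ by the identification you already made.
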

\proof By definition, $\chi(X \cap f^{-1}(c),{\rm Eu}_X)={\rm B}_{f,c}^X$ and ${\rm B}_f^{X,\infty}({\rm Eu}_X) ={\rm B}_f^{X,\infty}$. By Remark \ref{Remarque}, $\chi(X,{\rm Eu}_X)={\rm Eu}(X)$. But if $V_i \not= V_t$, then $\eta(V_i,{\rm Eu}_X)=0$ and $\eta(V_t,{\rm Eu}_X)=1$. \endproof

Another corollary is a Brylinski-Dubson-Kashiwara type formula for the global Brasselet number at infinity.
\begin{corollary}\label{BDKglobal2}
Assume that $X$ is equidimensional and let $\alpha : X \to \mathbb{Z}$ be a constructible function with respect to $\mathcal{V}$. We have 
$${\rm B}_f^{X,\infty}(\alpha) =\sum_{i=1}^t {\rm B}_{f}^{\overline{V_i},\infty} \eta (V_i,\alpha).$$
\end{corollary}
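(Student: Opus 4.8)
The plan is to reduce the identity, by linearity in $\alpha$, to the case where $\alpha$ ranges over the distinguished basis $\{{\rm Eu}_{\overline{V_k}}\}_{k=1}^{t}$ of the group of constructible functions with respect to $\mathcal{V}$, and to check it directly there. Both sides are $\mathbb{Z}$-linear in $\alpha$: the right-hand side because each $\eta(V_i,\cdot)$ is additive; the left-hand side because, for fixed $c$ and fixed admissible radius $R_c$, the map $\alpha\mapsto\chi(f^{-1}(c')\cap\{\rho_E\ge R_c\},\alpha)$ is linear, hence so is its limit ${\rm B}_{f,c}^{X,\infty}(\alpha)$ as $c'\to c$, and hence so is the sum ${\rm B}_f^{X,\infty}(\alpha)=\sum_{c}{\rm B}_{f,c}^{X,\infty}(\alpha)$, which is finite since ${\rm B}_{f,c}^{X,\infty}(\alpha)$ vanishes whenever $f^{-1}(c)$ is stratified $\rho_E$-regular at infinity. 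So it suffices to prove the formula for $\alpha={\rm Eu}_{\overline{V_k}}$, viewed as a constructible function on $X$ by extending it by $0$ outside $\overline{V_k}$ (it is then constant on each stratum of $\mathcal{V}$, hence $\mathcal{V}$-constructible).

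For such an $\alpha$ the right-hand side collapses immediately: by the orthogonality relations for the Euler obstruction recalled in Section~2, $\eta(V_i,{\rm Eu}_{\overline{V_k}})=1$ if $V_i=V_k$ and $0$ otherwise, so $\sum_{i=1}^{t}{\rm B}_f^{\overline{V_i},\infty}\,\eta(V_i,{\rm Eu}_{\overline{V_k}})={\rm B}_f^{\overline{V_k},\infty}$. For the left-hand side I would use that ${\rm Eu}_{\overline{V_k}}$ is supported on $\overline{V_k}$. If $R_c$ is admissible for $X$ at the level $c$, i.e.\ $f^{-1}(c)\cap\{\rho_E\ge R_c\}$ contains no critical point of ${\rho_E}_{\vert f^{-1}(c)}$ in the stratified sense for $\mathcal{V}$, then any such critical point lying in a stratum $V_i\subset\overline{V_k}$ would also be a critical point for the stratification of $\overline{V_k}$ induced by $\mathcal{V}$, so $R_c$ is admissible for $\overline{V_k}$ as well; and since the weight vanishes off $\overline{V_k}$,
$$\chi\big(f^{-1}(c')\cap\{\rho_E\ge R_c\},{\rm Eu}_{\overline{V_k}}\big)=\chi\big(\overline{V_k}\cap f^{-1}(c')\cap\{\rho_E\ge R_c\},{\rm Eu}_{\overline{V_k}}\big).$$
Passing to the limit $c'\to c$ — and using that this limit does not depend on the chosen large admissible radius, which is exactly the content of the computation preceding the definition of ${\rm B}_{f,a}^{X,\infty}$, now re-run with $\overline{V_k}$ in place of $X$ — gives ${\rm B}_{f,c}^{X,\infty}({\rm Eu}_{\overline{V_k}})={\rm B}_{f,c}^{\overline{V_k},\infty}$ for every $c$, and summing over $c$ yields ${\rm B}_f^{X,\infty}({\rm Eu}_{\overline{V_k}})={\rm B}_f^{\overline{V_k},\infty}$, matching the right-hand side.

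The only step that is not purely formal is this last compatibility under restriction to the closed subvariety $\overline{V_k}$: one must know that the ``radius at infinity'' and the limiting process defining ${\rm B}_{f,c}^{X,\infty}$ can be chosen to coincide with those used intrinsically on $\overline{V_k}$. I expect this to be the only real point requiring care, and it is handled precisely by re-running, for each $\overline{V_k}$, the paragraph that shows ${\rm B}_{f,a}^{X,\infty}(\alpha)$ is well defined and independent of the radius; the rest of the argument is bookkeeping with linearity and with the orthogonality of the Euler obstruction basis.
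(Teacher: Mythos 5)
Your proof is correct, but it follows a genuinely different route from the paper's. The paper obtains the identity as a consequence of its critical-point machinery: it applies Corollaries \ref{TopologyXgenericfibreEqui0} and \ref{TopologyXgenericfibreEqui} to each $\overline{V_i}$, sums against $\eta(V_i,\alpha)$, and then cancels the finite-distance terms using $\chi(X,\alpha)=\sum_i {\rm Eu}(\overline{V_i})\,\eta(V_i,\alpha)$ and Corollary \ref{BDKglobal1}. You instead argue formally from the definitions: linearity in $\alpha$, the orthogonality relations $\eta(V_i,{\rm Eu}_{\overline{V_k}})=\delta_{ik}$, and the observation that ${\rm Eu}_{\overline{V_k}}$ is supported on $\overline{V_k}$ so that the weighted Euler characteristic of the piece at infinity localizes there. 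The one point needing care --- that a radius admissible for $X$ at level $c$ is admissible for $\overline{V_k}$, and that the limit is radius-independent for $\overline{V_k}$ as well --- you identify and handle correctly, since the stratified critical points of ${\rho_E}_{\vert \overline{V_k}\cap f^{-1}(c)}$ form a subset of those of ${\rho_E}_{\vert f^{-1}(c)}$ and the paper's well-definedness argument applies verbatim to $\overline{V_k}$. Your approach buys two things: it is independent of Theorem \ref{TopologyXgenericfibre} and of the external inputs (\cite{MasseyTopology96}, \cite{DutertreIsrael}) feeding into it, and it actually proves the stronger value-by-value identity ${\rm B}_{f,a}^{X,\infty}(\alpha)=\sum_i {\rm B}_{f,a}^{\overline{V_i},\infty}\,\eta(V_i,\alpha)$ for every $a$, i.e.\ Corollary \ref{BDKglobal3}, from which Corollary \ref{BDKglobal2} follows by summing over $a$; the paper proves \ref{BDKglobal3} separately and later. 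What the paper's route buys is that the same computation simultaneously yields the critical-point interpretations in Corollaries \ref{TopologyXgenericfibreEqui0}--\ref{TopologyXgenericfibreEqui}, which are of independent interest.
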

\proof Applying Corollaries and \ref{TopologyXgenericfibreEqui0} and \ref{TopologyXgenericfibreEqui} to each set $\overline{V_i}$, we obtain that 
$$\chi(X,\alpha) -\chi(f^{-1}(c),\alpha) + {\rm B}_f^{X,\infty} (\alpha)= $$
$$\sum_{i=1}^t \left[ {\rm Eu}(\overline{V_i}) -{\rm B}_{f,c}^{\overline{V_i}}-{\rm B}_f^{\overline{V_i},\infty} \right] \eta(V_i,\alpha).$$
But we know that $\chi(X,\alpha) = \sum_{i=1}^t  {\rm Eu}(\overline{V_i}) \eta(V_i,\alpha)$ (see \cite{DutertreIsrael}, Corollary 5.4) and that 
$\chi(f^{-1}(c),\alpha) = \sum_{i=1}^t {\rm B}_{f,c}^{\overline{V_i}}\eta(V_i,\alpha).$ \endproof

Let us study what happens if we replace the generic regular value $c$ with any value $a$. First we do not assume that $X$ is equidimensional. For $i=1,\ldots,t$, let $n_i^a$ be the number of critical points of $\tilde{f}$ on $V_i$ appearing in a small neighborhood of a critical point of $f$, but that do not lie in a small neighborhood of a critical point $q$ of $f$ such that $f(q)=a$. Similarly, we set 
$$\mu_a^T(f_{\vert V_i})= \sum_{j \ \vert \ f(q_j) \not= a } \mu(f_{\vert V_j},q_j).$$
\begin{proposition}\label{TopologyXanyfibre}
Let $\alpha : X \rightarrow \mathbb{Z}$ be a constructible function with respect to $\mathcal{V}$ and let $a \in \mathbb{C}$. We have
$$\chi(X,\alpha) - \chi(f^{-1}(a),\alpha) = \sum_{i=1}^t (-1)^{d_i} n_i^a \eta(V_i,\alpha) - {\rm B}_f^{X,\infty}(\alpha) + {\rm B}_{f,a}^{X,\infty}(\alpha).$$
Moreover if $f$ is general with respect to $\mathcal{V}$ then we have
$$\chi(X,\alpha)-\chi(f^{-1}(a),\alpha)= \sum_{i=1}^t (-1)^{d_i} \mu_a^T(f_{\vert V_i}) \eta(V_i,\alpha) - {\rm B}_f^{X,\infty}(\alpha) + {\rm B}_{f,a}^{X,\infty}(\alpha).$$
\end{proposition}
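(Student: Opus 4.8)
The plan is to reuse, but only fibrewise over the value $a$, the Fubini computation from the proof of Theorem~\ref{TopologyXgenericfibre}. I keep the constructible function $\varphi(x)=\chi\bigl(f^{-1}(\tilde x)\cap B_\epsilon(x)\bigr)$ of that proof, so that $\varphi\equiv 1$ off $\{q_1,\dots,q_s\}$ and $\varphi(q_j)=\chi\bigl(f^{-1}(\tilde q_j)\cap B_\epsilon(q_j)\bigr)$, and I first treat the case $\alpha={\bf 1}_X$; the passage to an arbitrary constructible $\alpha$ will then be the same one used to deduce Corollary~\ref{TopologyXgenericfibreCons} from Theorem~\ref{TopologyXgenericfibre}, namely applying the case $\alpha={\bf 1}$ to each $\overline{V_k}$ and taking linear combinations, noting that the Morse counts, the complex links and the numbers at infinity restrict correctly to $\overline{V_k}$.

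The key is to evaluate $\int_{f^{-1}(a)}\varphi\,d\chi_c$ in two ways. On one hand, the computation ``for $y\in\mathbb C$'' performed in the proof of Theorem~\ref{TopologyXgenericfibre} gives, specialised to $y=a$,
$$\int_{f^{-1}(a)}\varphi\,d\chi_c=\chi\bigl(f^{-1}(\tilde a)\bigr)-\lambda_{f,a}^{X,\infty}=\chi\bigl(f^{-1}(c)\bigr)-\lambda_{f,a}^{X,\infty},$$
where $\tilde a$ is a regular value close to $a$ which is not a stratified asymptotic non-$\rho_E$-regular value, and the last equality uses the constancy of $\chi\bigl(f^{-1}(\cdot)\bigr)$ off the finite bad set. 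On the other hand, $\varphi_{\vert f^{-1}(a)}={\bf 1}_{f^{-1}(a)}+\sum_{j\ \vert\ f(q_j)=a}\bigl(\varphi(q_j)-1\bigr){\bf 1}_{\{q_j\}}$, and since $\chi_c\bigl(f^{-1}(a)\bigr)=\chi\bigl(f^{-1}(a)\bigr)$,
$$\int_{f^{-1}(a)}\varphi\,d\chi_c=\chi\bigl(f^{-1}(a)\bigr)-\sum_{j\ \vert\ f(q_j)=a}\bigl(1-\varphi(q_j)\bigr).$$
Equating the two expressions yields $\chi\bigl(f^{-1}(c)\bigr)-\chi\bigl(f^{-1}(a)\bigr)=\lambda_{f,a}^{X,\infty}-\sum_{j\ \vert\ f(q_j)=a}\bigl(1-\varphi(q_j)\bigr)$.

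Adding the identity of Theorem~\ref{TopologyXgenericfibre} in the form obtained just before Equality~$(*)$ is applied, $\chi(X)-\chi\bigl(f^{-1}(c)\bigr)=\sum_{j=1}^s\bigl(1-\varphi(q_j)\bigr)-\lambda_f^{X,\infty}$, gives
$$\chi(X)-\chi\bigl(f^{-1}(a)\bigr)=\sum_{j\ \vert\ f(q_j)\ne a}\bigl(1-\varphi(q_j)\bigr)-\lambda_f^{X,\infty}+\lambda_{f,a}^{X,\infty}.$$
Substituting Equality~$(*)$, $1-\varphi(q_j)=\sum_{i=1}^t(-1)^{d_i}n_{ij}\bigl(1-\chi(\mathcal L_{V_i}^X)\bigr)$, for each $j$ with $f(q_j)\ne a$ and using $\sum_{j\ \vert\ f(q_j)\ne a}n_{ij}=n_i^a$ proves the formula in the case $\alpha={\bf 1}_X$; the extension to general $\alpha$ is as announced, and the ``moreover'' part follows because, when $f$ is general with respect to $\mathcal V$, one has $n_i^a=\mu_a^T(f_{\vert V_i})$ exactly as in Theorem~\ref{TopologyXgenericfibre}.

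The main obstacle, as for Theorem~\ref{TopologyXgenericfibre}, is bookkeeping with $\chi$ versus $\chi_c$ and the vanishing contributions at infinity: one must in particular check that the computation of $\int_{f^{-1}(y)}\varphi\,d\chi_c$ in the proof of Theorem~\ref{TopologyXgenericfibre} is genuinely valid for every $y$, including the critical value $y=a$ (it is, since only the isolatedness of the $q_j$ on $f^{-1}(a)$ and the absence of $\rho_E$-critical points beyond a large radius are used), and that the linear-combination step over the $\overline{V_k}$ reassembles $n_i^a$, $\lambda_f^{X,\infty}$ and $\lambda_{f,a}^{X,\infty}$ into $\sum_i(-1)^{d_i}n_i^a\eta(V_i,\alpha)$, ${\rm B}_f^{X,\infty}(\alpha)$ and ${\rm B}_{f,a}^{X,\infty}(\alpha)$ with the signs displayed.
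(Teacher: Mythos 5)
Your proof is correct and follows essentially the same route as the paper's: the paper's own argument re-derives, by excising small balls around the critical points on $f^{-1}(a)$ and comparing $\chi(f^{-1}(c)\cap B_{R_a})$ with $\chi(f^{-1}(a))$, exactly the identity $\chi(X)-\chi(f^{-1}(a))=\sum_{j\ \vert\ f(q_j)\neq a}(1-\varphi(q_j))-\lambda_f^{X,\infty}+\lambda_{f,a}^{X,\infty}$ that you obtain by specialising the Fubini fibre computation of Theorem \ref{TopologyXgenericfibre} to $y=a$, and then concludes with Massey's equality and linearity in $\alpha$ just as you do.
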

\proof Let $c$ be a generic value (i.e., regular and not stratified asymptotic non-$\rho_E$-regular value) of $f$ close to $a$. Let $R_a \gg 1$ be such that $f^{-1}(a) \cap B_{R_a}$ is a deformation retract of $f^{-1}(a)$. We have
$$\displaylines{
\quad \chi(f^{-1}(c) \cap B_{R_a}) = \chi (f^{-1}(c) \cap B_{R_a} \setminus (\cup_{j \ \vert \ f(q_j)= a} \mathring{B_\epsilon (q_j)}) \hfill \cr
\hfill  + \sum_{j \ \vert \ f(q_j)=a} \chi(f^{-1}(c) \cap B_\epsilon(q_j))\quad \cr}$$  
$$=  \chi_c (f^{-1}(a) \cap B_{R_a} \setminus (\cup_{j \ \vert \ f(q_j)= a} \mathring{B_\epsilon (q_j)}) + \sum_{j \ \vert \ f(q_j)=a} \chi(f^{-1}(c) \cap B_\epsilon(q_j)) $$
$$=  \chi_c (f^{-1}(a) \cap B_{R_a} \setminus (\cup_{j \ \vert \ f(q_j)= a} B_\epsilon (q_j)) + \sum_{j \ \vert \ f(q_j)=a} \chi(f^{-1}(c) \cap B_\epsilon(q_j)) $$
$$ =\chi_c(f^{-1}(a) \cap B_{R_a}) - \sum_{j \ \vert \ f(q_j)=a} 1-\chi(f^{-1}(c) \cap B_\epsilon(q_j)) $$
$$ =\chi(f^{-1}(a) \cap B_{R_a}) - \sum_{j \ \vert \ f(q_j)=a} 1-\chi(f^{-1}(c) \cap B_\epsilon(q_j)) $$
$$= \chi(f^{-1}(a)) - \sum_{j \ \vert \ f(q_j)=a} 1-\chi(f^{-1}(c) \cap B_\epsilon(q_j)).$$
But 
$$\chi(f^{-1}(c)) = \chi (f^{-1}(c) \cap B_{R_a}) + \chi(f^{-1}(c) \cap \{ R_a \le \rho_E \le R_c \}) $$
$$=\chi(f^{-1}(a)) - \sum_{j \ \vert \ f(q_j)=a} 1-\chi(f^{-1}(c) \cap B_\epsilon(q_j)) + \lambda_{f,a}^{X,\infty}.$$
Combining this with the equality proved in Theorem \ref{TopologyXgenericfibre}, we get 
$$\chi(X)- \chi(f^{-1}(a)) = \sum_{j \ \vert \ f(q_j)\not=a} 1-\chi(f^{-1}(c) \cap B_\epsilon(q_j))-\lambda_{f}^{X,\infty} + \lambda_{f,a}^{X,\infty}.$$
Using Massey's results (\cite{MasseyTopology96}, Theorem 3.2), we obtain the result for $\alpha={\bf 1}_X$. The general case easily follows because of the linearity in $\alpha$ of both sides of the equality. 
\endproof
If $X$ is equidimensional, we can refine the above proposition.
\begin{corollary}\label{TopologyXanyfibreEqui}
Let $\alpha : X \rightarrow \mathbb{Z}$ be a constructible function with respect to $\mathcal{V}$ and let $a \in \mathbb{C}$. We have
$$\chi(X,\alpha) - \chi(f^{-1}(a),\alpha) = \sum_{i=1}^t (-1)^{d_i} n_i^a \eta(V_i,\alpha) - {\rm B}_f^{X,\infty}(\alpha) + {\rm B}_{f,a}^{X,\infty}(\alpha)$$
$$= \sum_{i=1}^t \left( \sum_{j \ \vert \ f(q_j) \not= a} {\rm Eu}_{f,\overline{V_i}} (q_j) \right) \eta(V_i,\alpha) - {\rm B}_f^{X,\infty}(\alpha) + {\rm B}_{f,a}^{X,\infty}(\alpha).$$
Moreover if $f$ is general with respect to $\mathcal{V}$ then we have
$$\chi(X,\alpha)-\chi( f^{-1}(a),\alpha)= \sum_{i=1}^t (-1)^{d_i} \mu_a^T(f_{\vert V_i}) \eta(V_i,\alpha) - {\rm B}_f^{X,\infty}(\alpha) + {\rm B}_{f,a}^{X,\infty}(\alpha).$$
\end{corollary}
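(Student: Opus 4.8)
The plan is to deduce this directly from Proposition \ref{TopologyXanyfibre} by rewriting the alternating sum $\sum_{i=1}^t (-1)^{d_i} n_i^a \, \eta(V_i,\alpha)$ in terms of Euler obstructions of functions, exactly as was done in passing from Corollary \ref{TopologyXgenericfibreCons} to Corollary \ref{TopologyXgenericfibreEqui0}. Proposition \ref{TopologyXanyfibre} already supplies both displayed equalities of the present statement with $(-1)^{d_i}n_i^a$ (resp. $(-1)^{d_i}\mu_a^T(f_{\vert V_i})$) in place of $\sum_{j \ \vert \ f(q_j)\neq a}{\rm Eu}_{f,\overline{V_i}}(q_j)$, so the only thing left to prove is the identity
$$(-1)^{d_i}\, n_i^a \;=\; \sum_{j \ \vert \ f(q_j) \not= a} {\rm Eu}_{f,\overline{V_i}}(q_j)$$
for each $i \in \{1,\ldots,t\}$, after which substitution into Proposition \ref{TopologyXanyfibre} finishes the argument (the ``moreover'' line being copied verbatim from that proposition).

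To prove the identity, recall that $n_i^a = \sum_{j \ \vert \ f(q_j)\neq a} n_{ij}$, where $n_{ij}$ is the number of critical points of the Morsefication $\tilde f$ lying on $V_i$ in a small neighbourhood of $q_j$, so it suffices to show $(-1)^{d_i}n_{ij} = {\rm Eu}_{f,\overline{V_i}}(q_j)$ for each $j$. Since $X$ is equidimensional and $V_i$ is a stratum of dimension $d_i$, its closure $\overline{V_i}$ is an equidimensional (indeed irreducible, for $i<t$) algebraic set of dimension $d_i$, the family $\{V_k \subset \overline{V_i}\}$ is again a Whitney stratification of $\overline{V_i}$ with $(\overline{V_i})_{\rm reg}=V_i$, and at each critical point $q_j \in \overline{V_i}$ the function $f_{\vert \overline{V_i}}$ has an isolated stratified singularity. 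Since $n_{ij}$ is precisely the number of Morse points of a Morsefication of $f_{\vert \overline{V_i}}$ lying on $V_i=(\overline{V_i})_{\rm reg}$ near $q_j$, Proposition \ref{Prop-n-reg} (Proposition 2.3 of \cite{SeadeTibarVerjovsky1}), applied to $f_{\vert \overline{V_i}}$ at $q_j$, yields $(-1)^{d_i}n_{ij} = {\rm Eu}_{f,\overline{V_i}}(q_j)$; this is exactly the observation already recorded in the paragraph preceding Corollary \ref{TopologyXgenericfibreEqui0}. Summing over the indices $j$ with $f(q_j)\neq a$ gives the desired identity, and hence the corollary.

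I do not expect a genuine obstacle here, as the proof is a bookkeeping variant of the one for Corollary \ref{TopologyXgenericfibreEqui0}. The only point needing a little care is that a critical point $q_j$ contributing to $n_{ij}$ need not lie on $V_i$ itself: it may lie on a boundary stratum $V_k \subset \overline{V_i}\setminus V_i$. In every such case $q_j\in\overline{V_i}$ is still an isolated stratified critical point of $f_{\vert\overline{V_i}}$, so ${\rm Eu}_{f,\overline{V_i}}(q_j)$ is defined and the cited identity applies, while for $q_j \notin \overline{V_i}$ both $n_{ij}$ and ${\rm Eu}_{f,\overline{V_i}}(q_j)$ vanish and contribute nothing to either side. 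Summation over all strata $i$ then produces the stated formula.
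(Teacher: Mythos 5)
Your proof is correct and follows exactly the route the paper intends: the corollary is Proposition \ref{TopologyXanyfibre} combined with the identity $(-1)^{d_i}n_{ij}={\rm Eu}_{f,\overline{V_i}}(q_j)$ from Proposition \ref{Prop-n-reg}, which is the same substitution already recorded before Corollary \ref{TopologyXgenericfibreEqui0}. Your extra care about critical points $q_j$ lying on boundary strata of $\overline{V_i}$ (or outside $\overline{V_i}$ entirely) is a sound and welcome detail that the paper leaves implicit.
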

As above, we can specify these equalities to the case $\alpha={\rm Eu}_X$.
\begin{corollary}\label{TopologyXanyfibreEuler}
Assume that $X$ is equidimensional. Let $a \in \mathbb{C}$. We have
$$\displaylines{ \quad {\rm Eu}_{f,a}^X= {\rm Eu}(X) -{\rm B}_{f,a}^X = (-1)^d n_t^a -{\rm B}_f^{X,\infty}+ {\rm B}_{f,a}^{X,\infty} \hfill \cr
\hfill = \sum_{j \ \vert \ f(q_j) \not= a} {\rm Eu}_{f,X}(q_j)-{\rm B}_f^{X,\infty} + {\rm B}_{f,a}^{X,\infty}. \quad}$$
Moreover if $f$ is general with respect to $\mathcal{V}$, then we have
$${\rm Eu}_{f,a}^X= {\rm Eu}(X) -{\rm B}_{f,a}^X =  \sum_{j \ \vert \ q_j \in X_{\rm reg} \atop f(q_j) \not= a} (-1)^d \mu (f_{\vert X_{\rm reg}},q_j)-{\rm B}_f^{X,\infty} + {\rm B}_{f,a}^{X,\infty}.$$
\end{corollary}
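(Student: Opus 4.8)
The plan is to specialize Corollary \ref{TopologyXanyfibreEqui} to the constructible function $\alpha = {\rm Eu}_X$, exactly as Corollary \ref{TopologyXgenericfibreEqui} was deduced from Corollary \ref{TopologyXgenericfibreEqui0}. Since $X$ is equidimensional we have $X = \overline{X_{\rm reg}} = \overline{V_t}$, so ${\rm Eu}_X = {\rm Eu}_{\overline{V_t}}$ is a constructible function with respect to $\mathcal{V}$ and the statement of Corollary \ref{TopologyXanyfibreEqui} applies to it verbatim.

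First I would substitute $\alpha = {\rm Eu}_X$ into the first displayed equality of Corollary \ref{TopologyXanyfibreEqui},
$$\chi(X,{\rm Eu}_X) - \chi(f^{-1}(a),{\rm Eu}_X) = \sum_{i=1}^t (-1)^{d_i} n_i^a \, \eta(V_i,{\rm Eu}_X) - {\rm B}_f^{X,\infty}({\rm Eu}_X) + {\rm B}_{f,a}^{X,\infty}({\rm Eu}_X),$$
and then rewrite each term. By Remark \ref{Remarque}, $\chi(X,{\rm Eu}_X) = {\rm Eu}(X)$; by the definition of the global Brasselet number at $a$, $\chi(f^{-1}(a),{\rm Eu}_X) = {\rm B}_{f,a}^X$; by definition ${\rm B}_f^{X,\infty}({\rm Eu}_X) = {\rm B}_f^{X,\infty}$ and ${\rm B}_{f,a}^{X,\infty}({\rm Eu}_X) = {\rm B}_{f,a}^{X,\infty}$; and the leftmost identity ${\rm Eu}_{f,a}^X = {\rm Eu}(X) - {\rm B}_{f,a}^X$ is just the definition of ${\rm Eu}_{f,a}^X$. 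The decisive point is the normal Morse index: since ${\rm Eu}_X = {\rm Eu}_{\overline{V_t}}$, the identities recalled in Section 2 give $\eta(V_i,{\rm Eu}_X) = 0$ for $i \neq t$ and $\eta(V_t,{\rm Eu}_X) = 1$, so the sum over $i$ collapses to the single term $(-1)^{d_t} n_t^a = (-1)^d n_t^a$. This yields the first chain of equalities; the remaining identity $(-1)^d n_t^a = \sum_{j \,\vert\, f(q_j) \neq a} {\rm Eu}_{f,X}(q_j)$ follows by the same substitution in the second displayed equality of Corollary \ref{TopologyXanyfibreEqui}, using $\overline{V_t} = X$ (equivalently, it is the sum over those $q_j$ with $f(q_j) \neq a$ of the relation $(-1)^{d} n_{tj} = {\rm Eu}_{f,X}(q_j)$ coming from Proposition \ref{Prop-n-reg}).

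For the ``moreover'' part, assuming $f$ is general with respect to $\mathcal{V}$, I would instead substitute $\alpha = {\rm Eu}_X$ into the last displayed equality of Corollary \ref{TopologyXanyfibreEqui}; again the sum collapses to the $i = t$ term and yields $(-1)^d \mu_a^T(f_{\vert X_{\rm reg}}) = \sum_{j \,\vert\, q_j \in X_{\rm reg},\, f(q_j) \neq a} (-1)^d \mu(f_{\vert X_{\rm reg}}, q_j)$. I do not expect a genuine obstacle here, since everything is a linear specialization of an already established identity; the only step deserving a word of justification is the vanishing $\eta(V_i,{\rm Eu}_X) = 0$ for $i < t$, which rests on $X$ being equidimensional so that ${\rm Eu}_X$ coincides with ${\rm Eu}_{\overline{V_t}}$ and the standard index identities for Euler obstructions apply.
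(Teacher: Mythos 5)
Your proposal is correct and follows exactly the route the paper intends: the paper's "proof" of this corollary is the single sentence "As above, we can specify these equalities to the case $\alpha={\rm Eu}_X$," i.e.\ substitute $\alpha={\rm Eu}_X$ into Corollary \ref{TopologyXanyfibreEqui} and use $\chi(X,{\rm Eu}_X)={\rm Eu}(X)$, the definitions of ${\rm B}_{f,a}^X$, ${\rm B}_f^{X,\infty}$, ${\rm B}_{f,a}^{X,\infty}$, and the index identities $\eta(V_i,{\rm Eu}_X)=0$ for $i\neq t$, $\eta(V_t,{\rm Eu}_X)=1$, just as in the proof of Corollary \ref{TopologyXgenericfibreEqui}. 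Your write-up supplies precisely these steps, so it matches the paper's argument.
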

We can also give a version of the Brylinski-Dubson-Kashiwara formula for the Brasselet numbers at infinity ${\rm B}_{f,a}^{X,\infty}$. 
\begin{corollary}\label{BDKglobal3}
Assume that $X$ is equidimensional. Let $\alpha : X \to \mathbb{Z}$ be a constructible function with respect to $\mathcal{V}$ and let  $a \in \mathbb{C}$. We have
$${\rm B}_{f,a}^{X,\infty}(\alpha) =\sum_{i=1}^t {\rm B}_{f,a}^{\overline{V_i},\infty} \eta (V_i,\alpha).$$
\end{corollary}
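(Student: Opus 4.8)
The plan is to follow the same scheme as the proof of Corollary \ref{BDKglobal2}, but now starting from the ``arbitrary fibre'' statements rather than the ``generic fibre'' ones, so that the extra boundary term ${\rm B}_{f,a}^{X,\infty}$ is retained. All the ingredients needed are already stated for an arbitrary constructible function, so no reduction to the basis $\{{\bf 1}_{\overline{V_i}}\}$ or $\{{\rm Eu}_{\overline{V_i}}\}$ is required; the identity will come out by a single cancellation after three substitutions.

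First I would apply Corollary \ref{TopologyXanyfibreEqui} to $X$ itself, which gives
$$\chi(X,\alpha)-\chi(f^{-1}(a),\alpha)=\sum_{i=1}^t\Big(\sum_{j\ \vert\ f(q_j)\neq a}{\rm Eu}_{f,\overline{V_i}}(q_j)\Big)\eta(V_i,\alpha)-{\rm B}_f^{X,\infty}(\alpha)+{\rm B}_{f,a}^{X,\infty}(\alpha).$$
Next I would apply Corollary \ref{TopologyXanyfibreEuler} to each closed set $\overline{V_i}$, which is reduced and equidimensional of dimension $d_i$ when equipped with the Whitney stratification induced by $\mathcal{V}$, and whose critical points of $f_{\vert\overline{V_i}}$ are among $q_1,\ldots,q_s$. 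Solving the scalar identity there for the Euler-obstruction sum yields
$$\sum_{j\ \vert\ f(q_j)\neq a}{\rm Eu}_{f,\overline{V_i}}(q_j)={\rm Eu}(\overline{V_i})-{\rm B}_{f,a}^{\overline{V_i}}+{\rm B}_f^{\overline{V_i},\infty}-{\rm B}_{f,a}^{\overline{V_i},\infty},$$
where the middle equality of Corollary \ref{TopologyXanyfibreEuler} is what converts $(-1)^{d_i}n_t^a$ for $\overline{V_i}$ into that Euler-obstruction sum, via Proposition \ref{Prop-n-reg}.

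Substituting this expression into the first display collapses the right-hand side into a sum of $\eta(V_i,\alpha)$-coefficients assembled out of $\overline{V_i}$-data. To finish I would invoke the three Brylinski--Dubson--Kashiwara-type identities already available: $\chi(X,\alpha)=\sum_i{\rm Eu}(\overline{V_i})\,\eta(V_i,\alpha)$ (\cite{DutertreIsrael}, Corollary 5.4), $\chi(f^{-1}(a),\alpha)=\sum_i{\rm B}_{f,a}^{\overline{V_i}}\,\eta(V_i,\alpha)$ (Corollary \ref{BDKglobal1}), and ${\rm B}_f^{X,\infty}(\alpha)=\sum_i{\rm B}_f^{\overline{V_i},\infty}\,\eta(V_i,\alpha)$ (Corollary \ref{BDKglobal2}). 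After these three substitutions the terms $\sum_i{\rm Eu}(\overline{V_i})\eta(V_i,\alpha)$, $\sum_i{\rm B}_{f,a}^{\overline{V_i}}\eta(V_i,\alpha)$ and ${\rm B}_f^{X,\infty}(\alpha)$ cancel on both sides, leaving precisely ${\rm B}_{f,a}^{X,\infty}(\alpha)=\sum_i{\rm B}_{f,a}^{\overline{V_i},\infty}\,\eta(V_i,\alpha)$.

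The main obstacle I expect is purely one of bookkeeping: keeping the signs straight when invoking Corollary \ref{TopologyXanyfibreEuler} on $\overline{V_i}$ (the same sign care that already appears in the proof of Corollary \ref{BDKglobal2}), and checking that the stratification induced on each $\overline{V_i}$ still meets the standing hypotheses of Section 5 --- connected strata except the top one, reduced closures, finitely many critical points of $f_{\vert\overline{V_i}}$, and finitely many stratified asymptotic non-$\rho_E$-regular values --- so that Corollaries \ref{TopologyXanyfibreEqui} and \ref{TopologyXanyfibreEuler} genuinely apply there. Once these verifications are in place, the remainder is the one-line linear cancellation described above.
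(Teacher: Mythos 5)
Your proposal is correct and follows essentially the same route as the paper: the paper's proof of Corollary \ref{BDKglobal3} is precisely ``apply the previous two corollaries (\ref{TopologyXanyfibreEqui} and \ref{TopologyXanyfibreEuler}) and proceed as in the proof of Corollary \ref{BDKglobal2},'' which is the substitution-and-cancellation scheme you describe, using \cite{DutertreIsrael} Corollary 5.4, Corollary \ref{BDKglobal1} and Corollary \ref{BDKglobal2} to cancel the remaining terms.
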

\proof Apply the previous two corollaries and proceed as in the proof  of Corollary \ref{BDKglobal2}. \endproof

An easy corollary is a relation between ${\rm Eu}_{f,a}^X$ (resp. ${\rm B}_{f,a}^X$) and ${\rm Eu}_{f,c}^X$ (resp. ${\rm B}_{f,c}^X$) where $c$ is generic.
\begin{corollary}\label{Eulerfgenericandany}
Assume that $X$ is equidimensional. Let $c \in \mathbb{C}$ be a regular value of $f$, which is not a stratified asymptotic non-$\rho_E$-regular value, and let $a \in \mathbb{C}$. We have
$$\displaylines{
\quad {\rm Eu}_{f,c}^X-{\rm Eu}_{f,a}^X= {\rm B}_{f,a}^X -{\rm B}_{f,c}^X= (-1)^d (n_t-n_t^a) - {\rm B}_{f,a}^{X,\infty} \hfill  \cr \hfill  = \sum_{j \ \vert \ f(q_j)  = a} {\rm Eu}_{f,X}(q_j)- {\rm B}_{f,a}^{X,\infty}. \quad \cr}$$
Moreover if $f$ is general with respect to $\mathcal{V}$, then we have
$${\rm Eu}_{f,c}^X-{\rm Eu}_{f,a}^X= {\rm B}_{f,a}^X -{\rm B}_{f,c}^X=  \sum_{j \ \vert \ q_j \in X_{\rm reg} \atop f(q_j) = a} (-1)^d \mu (f_{\vert X_{\rm reg}},q_j)- {\rm B}_{f,a}^{X,\infty}.$$
\end{corollary}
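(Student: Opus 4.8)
The plan is to deduce Corollary~\ref{Eulerfgenericandany} by simply subtracting the equalities of Corollary~\ref{TopologyXgenericfibreEqui} from those of Corollary~\ref{TopologyXanyfibreEuler}, both applied with the same equidimensional $X$ and the same polynomial $f$. More precisely, Corollary~\ref{TopologyXgenericfibreEqui} gives, for the generic value $c$,
$${\rm Eu}(X)-{\rm B}_{f,c}^X = (-1)^d n_t - {\rm B}_f^{X,\infty} = \sum_{j=1}^s {\rm Eu}_{f,X}(q_j) - {\rm B}_f^{X,\infty},$$
while Corollary~\ref{TopologyXanyfibreEuler} gives, for the arbitrary value $a$,
$${\rm Eu}(X)-{\rm B}_{f,a}^X = (-1)^d n_t^a - {\rm B}_f^{X,\infty} + {\rm B}_{f,a}^{X,\infty} = \sum_{j \ \vert \ f(q_j) \neq a} {\rm Eu}_{f,X}(q_j) - {\rm B}_f^{X,\infty} + {\rm B}_{f,a}^{X,\infty}.$$
Subtracting the second from the first, the ${\rm Eu}(X)$ and ${\rm B}_f^{X,\infty}$ terms cancel, yielding
$${\rm B}_{f,a}^X - {\rm B}_{f,c}^X = (-1)^d(n_t - n_t^a) - {\rm B}_{f,a}^{X,\infty},$$
and likewise $\sum_{j \ \vert \ f(q_j)=a} {\rm Eu}_{f,X}(q_j) - {\rm B}_{f,a}^{X,\infty}$ on the right, since the sum over all $j$ minus the sum over those $j$ with $f(q_j)\neq a$ is exactly the sum over those $j$ with $f(q_j)=a$. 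The first equality of the statement, ${\rm Eu}_{f,c}^X - {\rm Eu}_{f,a}^X = {\rm B}_{f,a}^X - {\rm B}_{f,c}^X$, is immediate from the definition ${\rm Eu}_{f,b}^X = {\rm Eu}(X) - {\rm B}_{f,b}^X$, since again ${\rm Eu}(X)$ cancels.

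For the ``moreover'' part, I would argue identically but using the ``general with respect to $\mathcal{V}$'' versions of the same two corollaries. The difference $n_t - n_t^a$ of numbers of Morse points on $X_{\rm reg}$ then reduces, stratum $V_t = X_{\rm reg}$ being the relevant one, to $\sum_{j \ \vert \ q_j \in X_{\rm reg},\ f(q_j)=a} \mu(f_{\vert X_{\rm reg}}, q_j)$, because when $f$ is general the number of Morse points of a Morsefication near $q_j$ on $V_t$ equals the Milnor number $\mu(f_{\vert X_{\rm reg}},q_j)$, and subtracting the ``$f(q_j)\neq a$'' contribution leaves precisely the ``$f(q_j)=a$'' contribution. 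Multiplying by the sign $(-1)^d$ (which is $(-1)^{d_t}$ for the top stratum) gives the stated formula.

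There is essentially no obstacle here: the corollary is a formal bookkeeping consequence of results already established, and the only thing to check carefully is that the indexing conventions match, namely that $n_t^a$ counts exactly the critical points of $\tilde f$ on $X_{\rm reg}$ near critical points $q_j$ of $f$ with $f(q_j) \neq a$, so that $n_t - n_t^a$ collects precisely the contributions from fibres over $a$; and similarly that ${\rm Eu}_{f,X}(q_j)$ is additive over the $q_j$ in the sense used in Corollaries~\ref{TopologyXgenericfibreEqui} and \ref{TopologyXanyfibreEuler}. Both of these are built into the definitions of $n_i$, $n_i^a$, $\mu^T$, $\mu_a^T$ given just before Theorem~\ref{TopologyXgenericfibre} and before Proposition~\ref{TopologyXanyfibre}, so the proof is a one-line subtraction in each of the two cases.
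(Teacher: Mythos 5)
Your proposal is correct and is exactly the argument the paper intends: the paper states this as ``an easy corollary'' with no written proof, and the intended derivation is precisely your subtraction of Corollary~\ref{TopologyXanyfibreEuler} from Corollary~\ref{TopologyXgenericfibreEqui} (together with the definition ${\rm Eu}_{f,b}^X={\rm Eu}(X)-{\rm B}_{f,b}^X$), with the ${\rm Eu}(X)$ and ${\rm B}_f^{X,\infty}$ terms cancelling. The bookkeeping of $n_t-n_t^a$ and of the sums over $\{j \mid f(q_j)=a\}$ checks out against the definitions given before Theorem~\ref{TopologyXgenericfibre} and Proposition~\ref{TopologyXanyfibre}.
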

When $f$ has no stratified asymptotic non-$\rho_E$-regular values then in all the above equalities, the terms $\lambda_f^{X,\infty}$, $\lambda_{f,a}^{X,\infty}$, ${\rm B}_f^{X,\infty}(\alpha)$, ${\rm B}_{f,a}^{X,\infty}(\alpha)$, ${\rm B}_f^{X,\infty}$ and ${\rm B}_{f,a}^{X,\infty}$ vanish. 
From now on, we assume that $X$ is equidimensional. If $f=l$ is the restriction to $X$ of a generic linear function $L: \mathbb{C}^N \rightarrow \mathbb{C}$, then $l$ has no stratified asymptotic non-$\rho_E$-regular values and moreover $l$ is a stratified Morse function (see \cite{SeadeTibarVerjovsky2}, Lemma 3.1). 

Keeping the notations introduced in \cite{SeadeTibarVerjovsky2}, we denote by $\alpha_X^{(d)}$ the number of (Morse) critical points of $l$ on $X_{\rm reg}$ and by $\alpha_{X,a}^{(d)}$ those not occuring on $l^{-1}(a)$. In this case, if $c$ is a regular value of $l$ then ${\rm Eu}_{l,c}^X =(-1)^d \alpha_X^{(d)}$ and if $a$ is a critical value of $l$, then ${\rm Eu}_{l,a}^X =(-1)^d \alpha_{X,a}^{(d)}$. By the relation between ${\rm B}_{l,a}^X$ and ${\rm Eu}(l^{-1}(a))$, we obtain 
$${\rm Eu} (X) - {\rm Eu}(l^{-1}(a)) = (-1)^d \alpha_{X,a}^{(d)} + \sum_{j \ \vert \ l(q_j)=a} {\rm Eu}_X(q_j) - {\rm Eu}_{l^{-1}(a)} (q_j),$$
where the $q_j$'s are the critical points of $l$.  For a regular value $c$ of $l$, this gives 
$${\rm Eu} (X) - {\rm Eu}(l^{-1}(c)) = (-1)^d \alpha_X^{(d)},$$
and we remark that we have recovered Equality (2), page 401 in \cite{SeadeTibarVerjovsky2}. Based on this equality, Seade, Tib\u{a}r and Verjovsky could express the global Euler obstruction as an alternating sum of global polar invariants. In the sequel, we will establish a relative version of this result for the global Brasselet number and the Brasselet numbers at infinity. 

So we consider a polynomial function $f : X \rightarrow \mathbb{C}$, restriction to $X$ of a polynomial function $F : \mathbb{C}^N \rightarrow \mathbb{C}$. We assume that $f$ has a finite number of critical points $\{q_1,\ldots,q_s\}$. For $a \in \mathbb{C}$, we put $X_a =f^{-1}(a)$. The algebraic set $X_a$ is equidimensional and if $q_1,\ldots,q_u$, $u \le s$, are the critical points of $f$ on $f^{-1}(a)$, then 
$$\mathcal{V}_a = \left(\sqcup_{i=1}^t V_i \cap f^{-1}(a) \setminus \{q_1,\ldots,q_u\} \right) \cup \left( \sqcup_{j=1}^u \{q_j\} \right),$$
is a Whitney stratification of $X_a$.  

Let $L : \mathbb{C}^N \rightarrow \mathbb{C}$ be a linear function and $l : X \rightarrow \mathbb{C}$ be its restriction to $X$. We denote by $\Gamma_{f,l}^X$ the relative polar variety of $f$ and $l$. It is defined as follows:
$$\Gamma_{f,l}^X = \overline{\left\{ x \in X_{\rm reg} \ \vert \ {\rm rank}[df(x),dl(x)] < 2 \right\}}.$$
It is well-known that for $L$ generic, $\Gamma_{f,l}^X$ is a reduced algebraic curve. Moreover if $L$ is generic, we can assume the following fact:
\begin{enumerate}
\item[ ] $l_{\vert X_a} : X_a \rightarrow \mathbb{C}$ is $\rho$-regular at infinity and Morse stratified.
\end{enumerate}
Let $I_X(\Gamma_{f,l}^X,X_a)$ be the global intersection multiplicity of $\Gamma_{f,l}^X$ and $X_a$, namely
$$I_X(\Gamma_{f,l}^X,X_a)= \sum_{p \in \Gamma_{f,l}^X \cap f^{-1}(a)} I_p(\Gamma_{f,l}^X, X_a),$$
where $I_p(\Gamma_{f,l}^X, X_a)$ is the local intersection multiplicity of $\Gamma_{f,l}^X$ and $X_a$ at $p$. If dim$(X)=1$ then $\Gamma_{f,l}^X=X$ and in this case $I_p(\Gamma_{f,l}^X,X_a)$ is the degree of $l:(X,p) \rightarrow (\mathbb{C},a)$, that is the cardinality of $l^{-1}(c) \cap X \cap B_\epsilon(p)$ for $0 < \vert c-a \vert \ll \epsilon \ll 1$.
\begin{proposition}\label{BrasseletNumberAndIntersectionMultiplicity1}
We have
$${\rm B}_{f,a}^X - {\rm B}_{f,a}^{X \cap H} =(-1)^{d-1} I_X (\Gamma_{f,l}^X,X_a)  +\sum_{j=1}^r {\rm Eu}_{f,X}(q_j),$$
where $H$ is a generic hyperplane given by $H=L^{-1}(g)$ for a regular value $g$ of $l_{\vert X_a}$ and $l_{\vert X}$.
\end{proposition}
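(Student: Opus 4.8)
The plan is to reduce the statement to the global polar multiplicity formula of Seade, Tib\u{a}r and Verjovsky, applied not to $X$ but to the fibre $X_a=f^{-1}(a)$ --- which is equidimensional of dimension $d-1$, carries the Whitney stratification $\mathcal{V}_a$, and comes with the linear function $l_{\vert X_a}$ that, by the genericity hypotheses placed on $l$ just before the statement, is $\rho_E$-regular at infinity and stratified Morse. I would first apply Proposition \ref{BrasseletNumberAndEulerObstruction} twice. For $X$ and $f$ at the value $a$ it gives
$${\rm B}_{f,a}^X = {\rm Eu}(X_a) + \sum_{j\,\vert\,f(q_j)=a}\big({\rm Eu}_X(q_j)-{\rm Eu}_{X_a}(q_j)\big).$$
For $X\cap H$ and $f_{\vert X\cap H}$ it gives simply ${\rm B}_{f,a}^{X\cap H}={\rm Eu}(X_a\cap H)$: indeed, for the generic hyperplane $H$ none of the $q_j$ lies on $H$, and since $g$ avoids the finitely many $l$-values of the polar loci $\Gamma_{f,l}^{\overline{V_i}}\cap f^{-1}(a)$, the function $f_{\vert X\cap H}$ has no critical point on $f^{-1}(a)$, so the correction term is empty. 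Subtracting,
$${\rm B}_{f,a}^X-{\rm B}_{f,a}^{X\cap H}=\big({\rm Eu}(X_a)-{\rm Eu}(X_a\cap H)\big)+\sum_{j\,\vert\,f(q_j)=a}\big({\rm Eu}_X(q_j)-{\rm Eu}_{X_a}(q_j)\big).$$

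Next I would apply to $X_a$ and $l_{\vert X_a}$ the global polar formula recovered in Section 5 (the analogue, for the $(d-1)$-dimensional set $X_a$, of the equality ${\rm Eu}(X)-{\rm Eu}(l^{-1}(c))=(-1)^d\alpha_X^{(d)}$ of \cite{SeadeTibarVerjovsky2}). Since $l_{\vert X_a}$ is $\rho_E$-regular at infinity there is no correction at infinity, and since it is stratified Morse with $g$ a regular value, this gives ${\rm Eu}(X_a)-{\rm Eu}(X_a\cap H)=(-1)^{d-1}\alpha_{X_a}^{(d-1)}$, where $\alpha_{X_a}^{(d-1)}$ is the number of Morse critical points of $l_{\vert X_a}$ on $(X_a)_{\rm reg}$. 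A smooth point $p$ of $X_a$ lying in $X_{\rm reg}$ is a critical point of $l_{\vert X_a}$ precisely when ${\rm rank}[df(p),dl(p)]<2$, i.e., when $p\in\Gamma_{f,l}^X$, and being stratified Morse forces such critical points to be nondegenerate; moreover, for generic $L$ the polar curve $\Gamma_{f,l}^X$ is reduced, meets $f^{-1}(a)$ only at its smooth points in $X_{\rm reg}$ and at the critical points $q_j$ of $f$, and is transverse to $X_a$ at the former (standard facts about relative polar curves). Consequently
$$\alpha_{X_a}^{(d-1)}=I_X(\Gamma_{f,l}^X,X_a)-\sum_{j\,\vert\,f(q_j)=a}I_{q_j}(\Gamma_{f,l}^X,X_a).$$

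Substituting the last two displays into the first and regrouping the terms attached to the critical points of $f$ on $f^{-1}(a)$, the proposition becomes equivalent to the purely local identity
$${\rm Eu}_{f^{-1}(a)}(q)={\rm B}_{f,X}(q)+(-1)^{d}\,I_q(\Gamma_{f,l}^X,f^{-1}(a))$$
for each isolated critical point $q$ of $f$ with $f(q)=a$; here ${\rm B}_{f,X}(q)={\rm Eu}_X(q)-{\rm Eu}_{f,X}(q)$ since $q$ is an isolated singularity, so the identity reads ${\rm Eu}_X(q)-{\rm Eu}_{f^{-1}(a)}(q)={\rm Eu}_{f,X}(q)-(-1)^{d}I_q(\Gamma_{f,l}^X,f^{-1}(a))$. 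This is the singular, relative analogue of the classical L\^e--Teissier identity $\mu+\mu'=\Gamma_{f,l}\cdot f^{-1}(a)$ --- when $X=\mathbb{C}^n$ it reduces to the latter via ${\rm Eu}_{f^{-1}(a)}(q)=1+(-1)^{n-2}\mu'(f,q)$ and ${\rm Eu}_{f,\mathbb{C}^n}(q)=(-1)^n\mu(f,q)$ --- and it follows from the relative L\^e--Teissier multiplicity formula for the Brasselet number of \cite{DutertreGrulhaAdv} together with the Brasselet--L\^e--Seade formula (Theorem \ref{BLS}) applied to $f^{-1}(a)$ at $q$.

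The main obstacle is exactly this last step: establishing, or locating precisely in \cite{DutertreGrulhaAdv}, the local identity linking ${\rm Eu}_{f^{-1}(a)}(q)$, the Brasselet number ${\rm B}_{f,X}(q)$ and the local polar intersection multiplicity $I_q(\Gamma_{f,l}^X,f^{-1}(a))$. A secondary technical point, needed in the second paragraph, is to justify the genericity of $L$ guaranteeing that $\Gamma_{f,l}^X$ is a reduced curve, meets $f^{-1}(a)$ only at its smooth points in $X_{\rm reg}$ and at the $q_j$, and cuts $X_a$ transversally at the Morse points of $l_{\vert X_a}$; this is part of the standard theory of relative polar varieties.
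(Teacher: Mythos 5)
Your proof follows essentially the same route as the paper's: apply Proposition \ref{BrasseletNumberAndEulerObstruction} to $X$ and to $X\cap H$, apply the Seade--Tib\u{a}r--Verjovsky global polar formula to $X_a$ to get ${\rm Eu}(X_a)-{\rm Eu}(X_a\cap H)=(-1)^{d-1}\alpha_{X_a}^{(d-1)}$, split $I_X(\Gamma_{f,l}^X,X_a)$ into $\alpha_{X_a}^{(d-1)}$ plus the local intersection numbers at the $q_j$, and reduce to the local identity ${\rm Eu}_X(q_j)-{\rm Eu}_{X_a}(q_j)=(-1)^{d-1}I_{q_j}(\Gamma_{f,l}^X,X_a)+{\rm Eu}_{f,X}(q_j)$. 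The ``main obstacle'' you flag is resolved in the paper exactly where you guessed: Corollary 5.2 of \cite{DutertreGrulhaAdv} gives $(-1)^{d-1}I_{q_j}(\Gamma_{f,l}^X,X_a)={\rm B}_{f,X}(q_j)-{\rm B}_{f,X\cap H_j}(q_j)$ and Corollary 6.6 there identifies ${\rm B}_{f,X\cap H_j}(q_j)$ with ${\rm Eu}_{X_a}(q_j)$; the only piece genuinely missing from your argument is the case ${\rm dim}(X)=1$, which the paper treats separately (there $X_a$ is zero-dimensional, ${\rm B}_{f,a}^{X\cap H}=0$, and one argues directly with degrees and Theorem 3.1 of \cite{BMPS}).
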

\proof Let us treat first the case ${\rm dim}(X)>1$. Applying Equality (2) of \cite{SeadeTibarVerjovsky2} that we have mentioned above  to $X_a$ and $X_a \cap L^{-1}(g)$, we get
$${\rm Eu}(X_a) -{\rm Eu}(X_a \cap H) =(-1)^{d-1} \alpha_{X_a}^{(d-1)},$$
where $\alpha_{X_a}^{(d-1)}$ is the number of critical points of $l_{\vert X_a}$ on $X_{\rm reg} \cap (f^{-1}(a) \setminus \{q_1,\ldots,q_u\})$.   
Since $g$ is a regular value of $l_{\vert X_a}$ then 
$${\rm Eu}(X_a \cap H)= \sum_{i=1}^t \chi(V_i \cap f^{-1}(a) \cap H) {\rm Eu}_{X_a \cap H}(V_i \cap H \cap f^{-1}(a)).$$
The hyperplane $H$ intersects $X$ transversally. Furthermore, because the intersections $V_i \cap f^{-1}(a) \cap H$ are transverse, we know that 
$${\rm Eu}_{X_a \cap H}(V_i \cap H \cap f^{-1}(a)) = {\rm Eu}_{X \cap H}(V_i \cap H),$$
which implies that ${\rm Eu}(X_a \cap H)={\rm B}_{f,a}^{X \cap H}$. Applying Proposition \ref{BrasseletNumberAndEulerObstruction}, we obtain
$${\rm B}_{f,a}^X -{\rm B}_{f,a}^{X \cap H}= (-1)^{d-1} \alpha_{X_a}^{(d-1)} + \sum_{j=1}^r {\rm Eu}_X(q_j) -{\rm Eu}_{X_a}(q_j).$$ 
But $I_X (\Gamma_{f,l}^X,X_a)$ is equal to 
$$\alpha_{X_a}^{(d-1)} + \sum_{j=1}^r I_{q_j} (\Gamma_{f,l}^X,X_a).$$
By Corollary 5.2 in \cite{DutertreGrulhaAdv}, we have 
$$(-1)^{d-1} I_{q_j} (\Gamma_{f,l}^X,X_a) ={\rm B}_{f,X}(q_j) -{\rm B}_{f,X \cap H_j} (q_j),$$
where $H_j = L^{-1}(L(q_j))$. Corollary 6.6 in \cite{DutertreGrulhaAdv} implies that 
$$(-1)^{d-1} I_{q_j} (\Gamma_{f,l}^X,X_a) ={\rm B}_{f,X}(q_j) -{\rm Eu}_{X_a}(q_j)= {\rm Eu}_X(q_j) -{\rm Eu}_{f,X}(q_j) -{\rm Eu}_{X_a}(q_j),$$
and so 
$${\rm Eu}_X(q_j)-{\rm Eu}_{X_a}(q_j)=(-1)^{d-1} I_{q_j}(\Gamma_{f,l}^X,X_a) + {\rm Eu}_{f,X}(q_j).$$
If ${\rm dim}(X)=1$ then ${\rm B}_{f,a}^{X \cap H}=0$ and 
$${\rm B}_{f,a}^X = \sum_{p \in f^{-1}(a)} {\rm Eu}_X(p) = \# f^{-1}(a) \setminus \{q_1,\ldots,q_r\} + \sum_{j=1}^r {\rm Eu}_X (q_j).$$
Applying Theorem 3.1 in \cite{BMPS}, it is easy to see that 
$${\rm Eu}_X(q_j)= {\rm Eu}_{f,X}(q_j) + I_{q_j}(\Gamma_{f,l}^X,X_a).$$
Since $I_p(\Gamma_{f,l}^X, X_a)=1$ if $p$ is a regular point of $f$, we obtain the result.
\endproof

By a standard connectivity argument, $I_X(\Gamma_{f,l}^X,X_a)$ does not depend on the choice of the generic linear function $L$. Following Tib\u{a}r's notation \cite{TibarIMRN98}, we denote it by $\gamma_{X,a}^{(d-1)}$. Similarly for $i=2,\ldots,d$, we define 
$$\gamma_{X,a}^{(d-i)} = I_{X \cap H^{i-1}}(\Gamma_{f,l}^{X \cap H^{i-1}}, X_a \cap H^{i-1}),$$
where $H^{i-1}$ is a generic linear space of codimension $i-1$. The following statement is a relative version of the Seade-Tib\u{a}r-Verjovsky polar formula for the global Euler obstruction.
\begin{corollary}\label{BrasseletNumberAndIntersectionMultiplicity2}
We have 
$${\rm B}_{f,a}^X = \sum_{i=1}^d (-1)^{d-i} \gamma_{X,a}^{(d-i)} + \sum_{j=1}^r {\rm Eu}_{f,X}(q_j).$$
\end{corollary}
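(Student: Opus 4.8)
The plan is to iterate Proposition \ref{BrasseletNumberAndIntersectionMultiplicity1} and telescope the resulting sum. The starting point is the identity
$${\rm B}_{f,a}^X - {\rm B}_{f,a}^{X \cap H} = (-1)^{d-1} \gamma_{X,a}^{(d-1)} + \sum_{j=1}^r {\rm Eu}_{f,X}(q_j),$$
where $H$ is a generic hyperplane and $\gamma_{X,a}^{(d-1)} = I_X(\Gamma_{f,l}^X, X_a)$ is the polar multiplicity, which by the connectivity remark is independent of the generic $L$. The key observation is that $X \cap H$ is again an equidimensional algebraic set, this time of dimension $d-1$, stratified by the transverse intersections $V_i \cap H$, and that $f_{\vert X \cap H}$ has the \emph{same} critical points $q_1,\ldots,q_r$ on $f^{-1}(a)$ (indeed $q_j \in X_{\rm sing}$ and $H$ is transverse to every stratum, so the critical locus of $f$ on $X \cap H$ over the value $a$ is unchanged) with ${\rm Eu}_{f,X\cap H}(q_j) = {\rm Eu}_{f,X}(q_j)$ — this last equality coming from the same transversality together with Corollary 6.6 of \cite{DutertreGrulhaAdv} (or directly from the good behaviour of the Euler obstruction of a function under generic hyperplane section, as used already in the proof of Proposition \ref{BrasseletNumberAndIntersectionMultiplicity1}).

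First I would make precise the inductive step: applying Proposition \ref{BrasseletNumberAndIntersectionMultiplicity1} to the pair $(X \cap H^{i-1}, f_{\vert X \cap H^{i-1}})$ in place of $(X,f)$, with a fresh generic hyperplane section giving $X \cap H^i$, yields
$${\rm B}_{f,a}^{X \cap H^{i-1}} - {\rm B}_{f,a}^{X \cap H^i} = (-1)^{d-i} \gamma_{X,a}^{(d-i)} + \sum_{j=1}^r {\rm Eu}_{f,X}(q_j),$$
for $i = 1,\ldots,d$ (with $H^0 = \mathbb{C}^N$). Here I use that $\dim(X \cap H^{i-1}) = d-i+1$, so the sign in Proposition \ref{BrasseletNumberAndIntersectionMultiplicity1} is $(-1)^{(d-i+1)-1} = (-1)^{d-i}$, and that the polar multiplicity $I_{X \cap H^{i-1}}(\Gamma_{f,l}^{X \cap H^{i-1}}, X_a \cap H^{i-1})$ is exactly the definition of $\gamma_{X,a}^{(d-i)}$ given just before the statement. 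One must also check that the generic linear functions can be chosen so that all the required genericity conditions (regularity of $g$ for $l_{\vert X_a \cap H^{i-1}}$ and $l_{\vert X \cap H^{i-1}}$, and the $\rho$-regularity/Morse condition on the hyperplane slices) hold simultaneously for the finitely many steps; this is a standard open-dense argument.

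Then I would sum these $d$ equations. The left-hand side telescopes to ${\rm B}_{f,a}^X - {\rm B}_{f,a}^{X \cap H^d}$, and since $\dim(X \cap H^d) = 0$ is a finite set of points all lying off $f^{-1}(a)$ for generic $H^d$ (or contributing nothing in the way the base case of Proposition \ref{BrasseletNumberAndIntersectionMultiplicity1} already handles the one-dimensional situation), one has ${\rm B}_{f,a}^{X \cap H^d} = 0$; more carefully, the dimension-one instance of Proposition \ref{BrasseletNumberAndIntersectionMultiplicity1} already records ${\rm B}_{f,a}^{X \cap H} = 0$ when $\dim(X \cap H) = 1$, so stopping the induction at $i = d-1$ and invoking the one-dimensional case of the proposition for the last step avoids any separate zero-dimensional discussion. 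The right-hand side sums to $\sum_{i=1}^d (-1)^{d-i} \gamma_{X,a}^{(d-i)} + \sum_{j=1}^r {\rm Eu}_{f,X}(q_j)$, where the contribution $\sum_j {\rm Eu}_{f,X}(q_j)$ appears once at each of the $d$ levels but — and this is the only subtlety worth flagging — the $\sum_j {\rm Eu}_{f,X}(\cdot)$ terms do \emph{not} accumulate, because the equation for level $i$ has on its right $\sum_j {\rm Eu}_{f, X \cap H^{i-1}}(q_j)$ while on the left the term $-{\rm B}_{f,a}^{X \cap H^i}$ is the same object that reappears with a $+$ sign at level $i+1$; unwinding, exactly one copy of $\sum_j {\rm Eu}_{f,X}(q_j)$ survives (the Euler obstruction of a function being invariant under the generic slices, as noted above), giving the claimed formula.

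\textbf{The main obstacle} I anticipate is bookkeeping rather than anything deep: one must verify carefully that ${\rm Eu}_{f, X \cap H^{i-1}}(q_j) = {\rm Eu}_{f,X}(q_j)$ for every $j$ and every $i$ (so that the single surviving correction term is genuinely $\sum_j {\rm Eu}_{f,X}(q_j)$ and not a level-dependent quantity), and that the generic hyperplanes $H, H^2, \ldots$ can be taken simultaneously generic for all the finitely many side conditions invoked along the way. Both follow from the transversality of generic linear sections to a fixed finite Whitney stratification and from the cited results of \cite{DutertreGrulhaAdv} on the behaviour of the local Brasselet number and the Euler obstruction of a function under generic hyperplane section; the induction itself is then immediate.
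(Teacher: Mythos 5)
Your overall strategy --- iterate Proposition \ref{BrasseletNumberAndIntersectionMultiplicity1} through the generic plane sections $X \supset X\cap H \supset \cdots \supset X\cap H^{d}$ and telescope --- is exactly the paper's, but the key bookkeeping claim on which your argument rests is false. You assert that $f_{\vert X\cap H^{i-1}}$ has the \emph{same} critical points $q_1,\ldots,q_r$ on $f^{-1}(a)$, with the same Euler obstructions, at every level. A generic hyperplane $H=L^{-1}(g)$ (with $g$ a regular value of $l_{\vert X_a}$ and $l_{\vert X}$) misses the finitely many points $q_j$ altogether; more importantly, for $i\ge 2$ one chooses $H^{i-1}$ transverse to $X_a$, which forces $f^{-1}(a)$ to meet $X\cap H^{i-1}$ transversally, so that $a$ is a \emph{regular} value of $f_{\vert X\cap H^{i-1}}$ and the correction term $\sum_j {\rm Eu}_{f,X\cap H^{i-1}}(\cdot)$ in the Proposition applied at level $i\ge 2$ is an empty sum, not $\sum_j{\rm Eu}_{f,X}(q_j)$. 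This is precisely the one observation the paper's proof records (``for $i\ge 2$, \dots $f^{-1}(a)$ intersects $H^{i-1}\cap X$ transversally''), and it is what makes the final formula contain a single copy of $\sum_{j}{\rm Eu}_{f,X}(q_j)$, coming from the first application alone.

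Your attempted repair --- the ``unwinding'' remark that the repeated copies of $\sum_j{\rm Eu}_{f,X}(q_j)$ do not accumulate because $-{\rm B}_{f,a}^{X\cap H^i}$ reappears with a $+$ sign at the next level --- is not a valid argument. The left-hand sides are numbers that telescope to ${\rm B}_{f,a}^X-{\rm B}_{f,a}^{X\cap H^d}$ regardless of what sits on the right; if each of the $d$ right-hand sides genuinely carried a copy of $\sum_j{\rm Eu}_{f,X}(q_j)$, summing would produce $d$ copies and contradict the stated corollary. So the gap is real: what you need is the vanishing of the critical locus of $f_{\vert X\cap H^{i-1}}$ over $a$ for $i\ge 2$, not an invariance of ${\rm Eu}_{f,\cdot}(q_j)$ under slicing. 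The remaining ingredients of your write-up (the sign $(-1)^{(d-i+1)-1}$, the identification of $I_{X\cap H^{i-1}}(\Gamma_{f,l}^{X\cap H^{i-1}},X_a\cap H^{i-1})$ with $\gamma_{X,a}^{(d-i)}$, simultaneous genericity of the finitely many sections, and handling the last, one-dimensional step via the base case of the Proposition) are fine.
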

\proof We apply the previous result to $X \cap H^{i-1}$. Note that for $i \ge 2$, we can choose $H^{i-1}$ generic enough so that $H^{i-1}$ intersects $X_a$ transversally, which implies that $f^{-1}(a)$ intersects $H^{i-1} \cap X$ transversally. \endproof
If $a=c$ is a generic value of $f$, then the above corollary becomes
$${\rm B}_{f,c}^X = \sum_{i=1}^d (-1)^{d-i} \gamma_{X,c}^{(d-i)}.$$
If we apply this to $f=l$, the restriction to $X$ of a generic linear function $L:\mathbb{C}^n \rightarrow \mathbb{C}$, then for $i=1,\ldots,d$, $\gamma_{X,c}^{d-i}$ is exactly equal to the number $\alpha_X^{(d-i)}$ defined in \cite{SeadeTibarVerjovsky2}, which is the number of critical points of a generic linear function on $X_{\rm reg} \cap H^{i-1}$. Combining this fact with the equality
$${\rm B}_{f,c}^X= {\rm Eu}( X) -{\rm Eu}_{f,c}^X={\rm Eu} (X) -(-1)^d \alpha_X^{(d)},$$
we obtain
$${\rm Eu}( X )= \sum_{i=0}^d (-1)^{d-i} \alpha_X^{(d-i)} = \sum_{i=0}^d (-1)^{i} \alpha_X^{(i)},$$
that is, the main result of  \cite{SeadeTibarVerjovsky2}.  \endproof
Another corollary is a characterization of the Brasselet numbers at infinity in terms of critical points of generic linear forms.
\begin{corollary}
Let $a \in \mathbb{C}$ be a stratified asymptotic non-$\rho_E$-regular value of $f$ and let $c \in \mathbb{C}$ be a generic regular value of $f$. We have
$${\rm B}_{f,a}^{X,\infty} = \sum_{i=1}^d (-1)^{d-i} \left( \gamma_{X,c}^{(d-i)} - \gamma_{X,a}^{(d-i)} \right).$$
\end{corollary}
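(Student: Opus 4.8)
The plan is to combine Corollary \ref{BrasseletNumberAndIntersectionMultiplicity2} with Corollary \ref{Eulerfgenericandany}, so that the polar contributions $\gamma$ and the local Euler obstructions of the critical points fall into place and only ${\rm B}_{f,a}^{X,\infty}$ survives. First I would apply Corollary \ref{BrasseletNumberAndIntersectionMultiplicity2} to the value $a$, which gives
\[
{\rm B}_{f,a}^X = \sum_{i=1}^d (-1)^{d-i} \gamma_{X,a}^{(d-i)} + \sum_{j \ \vert \ f(q_j)=a} {\rm Eu}_{f,X}(q_j),
\]
and then to the generic regular value $c$, where the set of critical points of $f$ lying on $f^{-1}(c)$ is empty, so the last sum disappears and
\[
{\rm B}_{f,c}^X = \sum_{i=1}^d (-1)^{d-i} \gamma_{X,c}^{(d-i)}.
\]
Subtracting these two identities yields
\[
{\rm B}_{f,c}^X - {\rm B}_{f,a}^X = \sum_{i=1}^d (-1)^{d-i} \left( \gamma_{X,c}^{(d-i)} - \gamma_{X,a}^{(d-i)} \right) - \sum_{j \ \vert \ f(q_j)=a} {\rm Eu}_{f,X}(q_j).
\]

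Next I would invoke Corollary \ref{Eulerfgenericandany}, which computes exactly the same left-hand side:
\[
{\rm B}_{f,a}^X - {\rm B}_{f,c}^X = \sum_{j \ \vert \ f(q_j)=a} {\rm Eu}_{f,X}(q_j) - {\rm B}_{f,a}^{X,\infty},
\]
i.e. ${\rm B}_{f,c}^X - {\rm B}_{f,a}^X = {\rm B}_{f,a}^{X,\infty} - \sum_{j \ \vert \ f(q_j)=a} {\rm Eu}_{f,X}(q_j)$. Equating the two expressions for ${\rm B}_{f,c}^X - {\rm B}_{f,a}^X$, the identical sums $\sum_{j \ \vert \ f(q_j)=a} {\rm Eu}_{f,X}(q_j)$ cancel on both sides, leaving precisely
\[
{\rm B}_{f,a}^{X,\infty} = \sum_{i=1}^d (-1)^{d-i} \left( \gamma_{X,c}^{(d-i)} - \gamma_{X,a}^{(d-i)} \right),
\]
which is the claimed formula.

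The only point requiring care — and the place where the argument could go wrong if one is sloppy — is the bookkeeping of which critical points index the sums: the sum $\sum_{j=1}^r {\rm Eu}_{f,X}(q_j)$ appearing in Corollary \ref{BrasseletNumberAndIntersectionMultiplicity2} must be understood as running over exactly the critical points $q_j$ of $f$ with $f(q_j)=a$ (the ``$q_1,\dots,q_u$'' of the relevant discussion preceding Proposition \ref{BrasseletNumberAndIntersectionMultiplicity1}), and this is the same index set as in the relevant sum of Corollary \ref{Eulerfgenericandany}; it is this coincidence that makes the two contributions cancel exactly. One should also note that all $\gamma_{X,c}^{(d-i)}$ and $\gamma_{X,a}^{(d-i)}$ are independent of the chosen generic linear function (the standard connectivity argument already used in the text), so the right-hand side is well defined. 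No further estimates or geometric input are needed: the result is a purely formal consequence of the two earlier corollaries.
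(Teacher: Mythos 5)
Your argument is correct and is essentially identical to the paper's proof: the authors likewise combine Corollary \ref{BrasseletNumberAndIntersectionMultiplicity2} (applied at $a$ and at a generic $c$, where the critical-point sum vanishes) with the rearranged identity ${\rm B}_{f,a}^{X,\infty} = {\rm B}_{f,c}^X -{\rm B}_{f,a}^X + \sum_{j} {\rm Eu}_{f,X}(q_j)$ from Corollary \ref{Eulerfgenericandany}. Your remark that the index set in both sums must be read as the critical points $q_j$ with $f(q_j)=a$ is a correct and useful clarification of the paper's slightly inconsistent notation.
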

\proof Use the previous corollary and the equality
$${\rm B}_{f,a}^{X,\infty} = {\rm B}_{f,c}^X -{\rm B}_{f,a}^X + \sum_{j=1}^r {\rm Eu}_{f,X}(q_j).$$
\endproof
If $\alpha : X \rightarrow \mathbb{Z}$ is a constructible function relative to $\mathcal{V}$, then the previous equality, combined with the Brylinski-Dubson-Kashiwara formula for $B_{f,a}^{X,\infty}$ proved in Section 3, gives
$${\rm B}_{f,a}^{X,\infty}(\alpha) =\sum_{j=1}^t \left( \sum_{i=1}^{d_j} (-1)^{d_j-i} ( \gamma_{\overline{V_j},c}^{(d_j-i)} - \gamma_{\overline{V_j},a}^{(d_j-i)} ) \right) \eta(V_j,\alpha).$$
In particular for $\alpha={\bf 1}_X$, we get 
$$\lambda_{f,a}^{X,\infty}=\sum_{j=1}^t \left( \sum_{i=1}^{d_j} (-1)^{d_j-i} ( \gamma_{\overline{V_j},c}^{(d_j-i)} - \gamma_{\overline{V_j},a}^{(d_j-i)} ) \right) (1-\chi(\mathcal{L}^X_{V_j})).$$

We end this section with an application. We assume that ${\rm dim}(X) \ge 2$ and that $f$ is general with respect to $\mathcal{V}$. We also suppose that there exists $l: X \rightarrow \mathbb{C}$, restriction to $X$ of a linear form $L: \mathbb{C}^N \rightarrow \mathbb{C}$, such that $l$ has no stratified asymptotic non-$\rho_E$-regular values and is general with respect to $\mathcal{V}$ and such that the mapping $(f,l)_{\vert X_{\rm reg}}: X_{\rm reg} \rightarrow \mathbb{C}^2$ is a submersion. In this situation, Corollary \ref{TopologyXgenericfibreEqui} gives ${\rm Eu}_{f,c}^X=-{\rm B}_f^{X,\infty}$ because $f$ has no critical points on $X_{\rm reg}$. Similarly ${\rm Eu}_{f,c}^{X\cap H} =-B_f^{X \cap H,\infty}$, where $H=L^{-1}(g)$ for a regular value $g$ of $l:X \rightarrow \mathbb{C}$. But, by Proposition \ref{BrasseletNumberAndIntersectionMultiplicity1} applied to $f$ and $l$, we find 
$${\rm B}_{f,c}^X-{\rm B}_{f,c}^{X \cap H}=0={\rm Eu}(X)-{\rm Eu}_{f,c}^X - {\rm Eu}(X \cap H)+{\rm Eu}_{f,c}^{X \cap H}.$$
But ${\rm Eu}(X)={\rm Eu}(X \cap H)$ because $l$ has no critical points on $X_{\rm reg}$. Finally, we obtain that ${\rm B}_f^{X,\infty}={\rm B}_f^{X \cap H,\infty}$. When we apply this equality to $\mathbb{C}^2$, we recover a well-known result.
Indeed, if $X=\mathbb{C}^2$ then $\lambda_f^\infty=0$ because $f_{\vert H}$ is proper and so $\lambda_f^{ H,\infty}=0$. But $\lambda_f^\infty= \sum_{i=1}^r \lambda_{f,a_i}^\infty$. In this case,
$$\lambda_{f,a_i}^\infty = \chi \left(f^{-1}(c) \cap \{ R_a \le \rho_E \le R_{\tilde{a_i}} \} \right),$$
and $f^{-1}(c) \cap \{ R_a \le \rho_E \le R_{\tilde{a_i}} \}$ is a smooth compact orientable surface with at least two boundary components. Therefore $\lambda_{f,a_i}^\infty \le 0$ and so $\lambda_{f,a_i}^\infty=0$ for each $i \in \{1,\ldots,r\}$. Applying Theorem \ref{TopologyXgenericfibre} and Proposition \ref{TopologyXanyfibre}, we find that for all $c \in \mathbb{C}$, $\chi(f^{-1}(c))=1$. Hence, by Suzuki \cite{Suzuki} or H\^a-L\^e's results \cite{HaLe}, $f$ has no bifurcation value at infinity and so $f : \mathbb{C}^2 \rightarrow \mathbb{C}$ is a fibration. \endproof

\section{Global Euler obstruction and the Gauss-Bonnet measure}
In this section, we relate the global Euler obstruction of an equidimensional algebraic set $X \subset \mathbb{C}^N$ to the Gauss-Bonnet curvature of its regular part and the Gauss-Bonnet curvature of the regular part of its link at infinity. Actually the result we will prove is the global counterpart of the formula that the first author established for analytic germs in \cite{DutertreIsrael} and that gave a positive answer to a question of Fu \cite{FuGeoDiff96}. 

Before recalling this formula, let us give a brief presentation of the Lip\-schitz-Killing curvatures.  
In \cite{FuAmerJ94}, Fu developed integral geometry for compact subanalytic sets. Using the technology of the normal cycle, he associated with every compact subanalytic set $X \subset \mathbb{R}^N$ a sequence of curvature measures 
$$\Lambda_0(X,-),\ldots,\Lambda_N(X,-),$$
called the Lipschitz-Killing measures. He proved several integral geometry formulas, among them a Gauss-Bonnet formula and a kinematic formula. Later another description of the measures using stratified Morse theory was given by Broecker and Kuppe \cite{BroeckerKuppe} (see also \cite{BernigBroecker}). The reader can refer to \cite{DutertreGeoDedicata}, Section 2, for a rather complete presentation of these two approaches and for the definition of the Lipschitz-Killing measures. 

Let us give some comments on these Lipschitz-Killing curvatures. If ${\rm dim}(X)=d$ then
$$\Lambda_{d+1}(X,U')=\cdots=\Lambda_N(X,U')=0,$$
for any Borel set $U'$ of $X$ and $\Lambda_d(X,U')= \mathcal{H}_d(U')$, where $\mathcal{H}_d$ is the $d$-dimensional Hausdorff measure in $\mathbb{R}^N$. Furthemore if $X$ is smooth then for any Borel set $U'$ of $X$
and for $k \in \{0,\ldots,d \}$, $\Lambda_k (X,U')$ is related to the classical Lipschitz-Killing-Weil curvature $K_{d-k}$ through the following equality:
$$\Lambda_k (X,U')= \frac{1}{s_{N-k-1}} \int_{U'} K_{d-k}(x) dx,$$
where $s_{N-k-1}$  is the volume of the $N-k-1$-dimensional unit sphere. The measure $\Lambda_0(X,-)$ is also called the Gauss-Bonnet measure. This terminology is justified by the following Gauss-Bonnet formula (see \cite{FuAmerJ94} or \cite{BroeckerKuppe}):  $\Lambda_0(X,X)=\chi(X)$. 

In \cite{DutertreIsrael}, Corollary 6.10, the first author showed that if $(X,0) \subset (\mathbb{C}^N,0)$ is the germ of an equidimensional analytic set then 
$${\rm Eu}_X(0)= \lim_{\epsilon \to 0} \Lambda_0(X \cap B_\epsilon, X_{\rm reg} \cap S_\epsilon).$$
Roughly speaking, this means that the Euler obstruction is in the limit equal to the Gauss-Bonnet curvature of $X \cap B_\epsilon$ within $X_{\rm reg} \cap S_\epsilon$.

The method we will follow is exactly the same as the one used in \cite{DutertreIsrael}, Section 6, and that is why we will omit some details and often refer to \cite{DutertreIsrael}. However, we will see that there are differences between the two cases.

First we will work with closed semi-algebraic sets. So let $X \subset \mathbb{R}^N$ be a closed semi-algebraic set. We assume that $X$ is equipped with a finite Whitney semi-algebraic stratification $X=\cup_{a \in A} V_a$. For $R \gg 1$ sufficiently big, $B_R$ intersects $X$ transversally and so $X \cap B_R$ admits the following Whitney stratification:
$$X \cap B_R = \bigcup_{a \in A} V_a \cap \mathring{B_R} \cup \bigcup_{a \in A} V_a \cap S_R,$$
(note that $V_a \cap S_R = \emptyset$ if $V_a$ is bounded). By the Gauss-Bonnet theorem mentioned above, we have 
$$\chi(X \cap B_R)= \Lambda_0(X \cap B_R, X \cap B_R),$$
which implies that 
$$\chi(X \cap B_R)= \Lambda_0(X \cap B_R, X \cap \mathring{B_R}) + \Lambda_0(X \cap B_R, X \cap S_R).$$
As in \cite{DutertreIsrael}, Section 6, we can deduce that
$$\chi(X \cap B_R) = \Lambda_0(X , X \cap B_R) + \Lambda_0(X \cap B_R, X \cap S_R).$$
Since $\lim_{R \to + \infty}  \chi(X \cap B_R)= \chi(X)$ and since by Corollary 5.7 in \cite{DutertreAdvGeo}, $$\lim_{R \to +\infty} \Lambda_0(X,X \cap B_R),$$ exists and is finite, we find that $$\lim_{R \to +\infty} \Lambda_0(X \cap B_R,X \cap S_R),$$ exists and is finite. 

We can describe this last limit topologically by means of critical points on $X \cap S_R$ of generic linear functions. Namely, as in \cite{DutertreIsrael}, Section 6, we can prove that for almost all $v \in S^{N-1}$, $v^*_{\vert X}$ has a finite number of critical points and there exists $R_v >0$ such that for $R \ge R_v$, the function $v^*_{\vert X \cap S_R}$ is a stratified Morse function. Here $v^* (x)= \langle v, x \rangle$. A critical point $p$ of $v^*_{\vert X \cap S_R}$ is an inwards-pointing critical point of $v^*_{\vert X \cap B_R}$ if 
$$ \nabla (v^*_{\vert V})(p)=\lambda (p) \nabla({\rho_E}_{\vert V})(p),$$
with $\lambda(p)<0$ and where $V$ is the stratum that contains $p$. Let us denote by $\mathcal{I}_{v,R}$ the set of inwards-pointing critical points of $v^*_{\vert X \cap B_R}$.
\begin{proposition}\label{GaussBonnetSemiAlg}
We have 
$$\displaylines{
\qquad \lim_{R \to +\infty } \Lambda_0 ( X \cap B_R, X \cap S_R)  \hfill \cr
\hfill  =\frac{1}{s_{N-1}} \int_{S^{N-1}} \lim_{R \to +\infty} \sum_{p \in \mathcal{I}_{v,R}} (-1)^{\sigma(p)} \cdot {\rm ind}_{nor}(v^*, X \cap S_R, p) dv, \qquad \cr
}$$
where $\sigma(p)$ is the Morse index of $v^*_{\vert V \cap S_R}$, $V$ being the stratum that contains $p$, and ${\rm ind}_{nor}(v^*, X \cap S_R, p)$ is the normal Morse index of $v^*_{\vert X \cap S_R}$ at $p$.
\end{proposition}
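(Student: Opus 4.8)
The plan is to express the Gauss-Bonnet measure $\Lambda_0(X\cap B_R, X\cap S_R)$ as an integral over $S^{N-1}$ of a Morse-theoretic quantity, for each fixed large $R$, and then to pass to the limit. First I would recall the Morse-theoretic description of the Lipschitz-Killing measures due to Broecker-Kuppe (and presented in \cite{DutertreIsrael}, Section 6): for a compact Whitney stratified set $Z$ and a Borel subset $U'$, $\Lambda_0(Z,U')$ equals the average over $v\in S^{N-1}$ of a sum of local indices of $v^*_{\vert Z}$ at critical points lying in $U'$. Applying this with $Z=X\cap B_R$ and $U'=X\cap S_R$: a critical point of $v^*$ on $X\cap B_R$ lying on the ``boundary part'' $X\cap S_R$ is a critical point of $v^*_{\vert V\cap S_R}$ for the stratum $V$ containing it, and its contribution to $\Lambda_0$ splits, by the product structure of the normal Morse datum and the local conic structure, into a tangential factor $(-1)^{\sigma(p)}$ (from the Morse index of $v^*_{\vert V\cap S_R}$) times a normal factor, which is exactly ${\rm ind}_{nor}(v^*,X\cap S_R,p)$. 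The sign condition $\lambda(p)<0$ selecting $\mathcal{I}_{v,R}$ comes from the requirement that the critical point genuinely contributes to the measure supported on the outer boundary $S_R$ (the gradient of $v^*$ along $V$ must point ``inward'' relative to $B_R$, i.e. opposite to the outward normal of $S_R$, in order for the Morse stratum to be attached from inside $B_R$). This reproduces the identity
$$\Lambda_0(X\cap B_R, X\cap S_R)=\frac{1}{s_{N-1}}\int_{S^{N-1}}\sum_{p\in\mathcal{I}_{v,R}}(-1)^{\sigma(p)}\cdot{\rm ind}_{nor}(v^*,X\cap S_R,p)\,dv,$$
valid for every $R\ge R_v$ (outside a measure-zero set of $v$).

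Next I would justify taking $R\to+\infty$ on both sides. The left-hand side has a limit, as established in the paragraph preceding the proposition: $\chi(X\cap B_R)\to\chi(X)$, and $\lim_R\Lambda_0(X,X\cap B_R)$ exists and is finite by \cite{DutertreAdvGeo}, Corollary 5.7, so $\lim_R\Lambda_0(X\cap B_R,X\cap S_R)$ exists and is finite. For the right-hand side, the key point is that the inner sum $\sum_{p\in\mathcal{I}_{v,R}}(-1)^{\sigma(p)}{\rm ind}_{nor}(v^*,X\cap S_R,p)$ stabilizes as $R\to+\infty$ for almost every fixed $v$: by the choice of $R_v$, for $R\ge R_v$ the set $X\cap S_R$ is a fixed family of spheres-at-infinity, $v^*_{\vert X\cap S_R}$ is stratified Morse, and the critical points of $v^*_{\vert X\cap S_R}$ together with their indices and normal indices do not change with $R$ (they are governed by the behaviour of $X$ at infinity, i.e. by the link at infinity, which is independent of $R$ for $R$ large). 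Hence the inner sum is eventually constant in $R$, so its limit exists pointwise, and I would then exchange the limit and the integral over the compact set $S^{N-1}$ by dominated convergence, the domination coming from a uniform bound on the number of critical points and on the normal indices (finiteness of the stratification and semi-algebraic boundedness, cf. \cite{DutertreIsrael}, Section 6). This yields precisely the stated formula.

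The main obstacle I expect is the careful bookkeeping at the boundary $X\cap S_R$: correctly identifying which critical points of $v^*_{\vert X\cap B_R}$ contribute to the measure $\Lambda_0(X\cap B_R, X\cap S_R)$ as opposed to $\Lambda_0(X\cap B_R, X\cap\mathring{B_R})$, and matching the local index appearing in the Broecker-Kuppe formula with the product $(-1)^{\sigma(p)}\cdot{\rm ind}_{nor}(v^*,X\cap S_R,p)$ with the right signs. This is where the inwards/outwards dichotomy and the choice of the sign of $\lambda(p)$ matter, and it is exactly the computation carried out in \cite{DutertreIsrael}, Section 6, for the local case; here the only change is that the roles of $B_\epsilon$ (small) and $B_R$ (large) are swapped, so an inward-pointing critical point at $S_R$ plays the role that an outward-pointing one played near $0$. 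I would therefore argue that the same local computation applies verbatim once one restricts attention to a single sphere $S_R$ with $R\ge R_v$, noting the sign reversal, and then invoke the limit argument above. The rest is routine and can be referred to \cite{DutertreIsrael}.
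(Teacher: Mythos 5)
Your proposal is correct and follows essentially the same route as the paper, whose proof consists of a single citation to \cite{DutertreIsrael}, Proposition 6.6: the Broecker--Kuppe kinematic description of $\Lambda_0$, the identification of the boundary contribution as $(-1)^{\sigma(p)}\cdot{\rm ind}_{nor}$ restricted to inwards-pointing critical points, and the passage to the limit in $R$ are exactly the ingredients of that argument, transposed from small spheres around the origin to large spheres. The only soft spot is your claim that the critical points of $v^*_{\vert X\cap S_R}$ ``do not change with $R$'' (they do move, and their number need not be literally constant); the cleaner justification for the existence of the inner limit is the stratified Morse identity for $\chi(X\cap B_R)$ that the paper records immediately after the proposition, but this does not affect the validity of your approach.
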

\proof See \cite{DutertreIsrael}, Proposition 6.6. \endproof
Let us apply this equality in the case where $X$ is a complex algebraic set. Let $X \subset \mathbb{C}^N$ be an algebraic set. We keep the notation of the previous sections. We consider  a vector $v$ in $S^{2N-1}$ generic as above and we choose $R \ge R_v$.  
Let $p \in V \cap S_R$ be an inwards-pointing critical point of $v^*_{\vert X \cap B_R}$. It is explained in \cite{DutertreIsrael} that in this case ${\rm ind}_{nor}(v^*, X \cap S_R, p)=\eta(V,{\bf 1}_X)$. If we denote by $\{q_1^v,\ldots,q_s^v\}$ the critical points of $v^*_{\vert X}$ then, by stratified Morse theory, we can write
$$\chi(X \cap B_R)= \sum_{j=1}^s {\rm ind}(v^*,X,q_j^v) + \sum_{i=1}^t \eta(V_i,{\bf 1}_X) \sum_{p \in  \mathcal{I}_{v,R}^{V_i} } (-1)^{\sigma(p)},$$
for $R \gg 1$ sufficiently big and where $\mathcal{I}_{v,R}^{V_i} $ is the set of inwards-pointing critical points of $v^*_{\vert X \cap B_R}$ on the stratum $V_i$.  

When we apply this relation to $X=\overline{V}$ where $V$ is a stratum of depth $0$, this gives that $\lim_{R \to + \infty}  \sum_{p \in \mathcal{I}_{v,R}^{V}}  (-1)^{\sigma(p)}$ exists and is finite. Note that if ${\rm dim}(V)=0$ then $\mathcal{I}_{v,R}^V$ is empty and $\lim_{R \to + \infty}  \sum_{ p \in \mathcal{I}_{v,R}^{V}}  (-1)^{\sigma(p)}=0$. Applied to $X=\overline{W}$, for $W$ a stratum of depth 1, it gives that $\lim_{R \to + \infty}  \sum_{p \in  \mathcal{I}_{v,R}^{W}}  (-1)^{\sigma(p)}$ exists and is finite. By induction on the depth of the stratum, we see that for $i\in \{1,\ldots,t\}$, $\lim_{R \to + \infty}  \sum_{p  \in \mathcal{I}_{v,R}^{V_i} } (-1)^{\sigma(p)}$ exists and is finite. Proposition \ref{GaussBonnetSemiAlg} becomes
\begin{proposition}
We have 
$$\lim_{R \to + \infty} \Lambda_0 ( X \cap B_R, X \cap S_R)   =\sum_{i=1}^t \eta( V_i,{\bf 1}_X)  \frac{1}{s_{2N-1}} \int_{S^{2N-1}}  \lim_{R ù\to + \infty} \sum_{p \in \mathcal{I}^{V_i}_{v,R}} (-1)^{\sigma(p) }dv.$$
\end{proposition}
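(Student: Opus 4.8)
The plan is to feed into Proposition~\ref{GaussBonnetSemiAlg} (applied with the real dimension $2N$, since $X\subset\mathbb{C}^N$ is identified with a subset of $\mathbb{R}^{2N}$) the two facts recorded just above its statement. First, if $p\in\mathcal{I}_{v,R}$ lies on the stratum $V_i$, then ${\rm ind}_{nor}(v^*,X\cap S_R,p)=\eta(V_i,{\bf 1}_X)$, a number depending only on $i$, not on $p$ or on $R$. Second, $\mathcal{I}_{v,R}$ is the disjoint union $\bigsqcup_{i=1}^t\mathcal{I}_{v,R}^{V_i}$, each inwards-pointing critical point lying on exactly one stratum of $\mathcal{V}$. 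Combining these, the integrand of Proposition~\ref{GaussBonnetSemiAlg}, namely $\sum_{p\in\mathcal{I}_{v,R}}(-1)^{\sigma(p)}\cdot{\rm ind}_{nor}(v^*,X\cap S_R,p)$, equals $\sum_{i=1}^t\eta(V_i,{\bf 1}_X)\sum_{p\in\mathcal{I}_{v,R}^{V_i}}(-1)^{\sigma(p)}$ for $R\gg 1$ and $v$ generic.

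Next I would let $R\to+\infty$ inside this finite sum over $i$. This is legitimate because, as explained just before the statement by induction on the depth of the strata, for generic $v$ each of the limits $\lim_{R\to+\infty}\sum_{p\in\mathcal{I}_{v,R}^{V_i}}(-1)^{\sigma(p)}$ exists and is finite; hence
$$\lim_{R\to+\infty}\sum_{p\in\mathcal{I}_{v,R}}(-1)^{\sigma(p)}\cdot{\rm ind}_{nor}(v^*,X\cap S_R,p)=\sum_{i=1}^t\eta(V_i,{\bf 1}_X)\lim_{R\to+\infty}\sum_{p\in\mathcal{I}_{v,R}^{V_i}}(-1)^{\sigma(p)}.$$
Substituting this into the formula of Proposition~\ref{GaussBonnetSemiAlg} and splitting the integral over the finite index set $\{1,\dots,t\}$ --- each function $v\mapsto\lim_{R\to+\infty}\sum_{p\in\mathcal{I}_{v,R}^{V_i}}(-1)^{\sigma(p)}$ being bounded and measurable on the compact sphere $S^{2N-1}$, hence integrable --- gives
$$\lim_{R\to+\infty}\Lambda_0(X\cap B_R,X\cap S_R)=\sum_{i=1}^t\eta(V_i,{\bf 1}_X)\,\frac{1}{s_{2N-1}}\int_{S^{2N-1}}\lim_{R\to+\infty}\sum_{p\in\mathcal{I}_{v,R}^{V_i}}(-1)^{\sigma(p)}\,dv,$$
which is the asserted identity, once the constant $\eta(V_i,{\bf 1}_X)$ is pulled out of each integral.

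I do not expect a genuine obstacle here: the analytic content has already been concentrated in Proposition~\ref{GaussBonnetSemiAlg} and in the two preliminary observations (the value of the normal Morse index along a stratum, and the convergence of the stratumwise alternating sums). What remains is essentially bookkeeping --- checking that the splitting of $\mathcal{I}_{v,R}$ by strata is indeed a partition, choosing $v$ generic so that the finitely many functions $v^*_{\vert X}$, $v^*_{\vert X\cap S_R}$ and $v^*_{\vert\overline{V_i}\cap S_R}$ are simultaneously stratified Morse, and justifying the harmless interchange of $\lim_{R\to+\infty}$ with the finite sum over strata. The only point deserving a sentence of care is the integrability/measurability used to split the integral, which follows from the uniform boundedness (for generic $v$) of the number of critical points in the underlying semi-algebraic family together with the already-established existence of the pointwise limits.
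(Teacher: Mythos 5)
Your argument is exactly the one the paper intends: it states no separate proof, simply writing that Proposition~\ref{GaussBonnetSemiAlg} ``becomes'' the displayed formula after the two preparatory observations (that ${\rm ind}_{nor}(v^*,X\cap S_R,p)=\eta(V_i,{\bf 1}_X)$ on the stratum $V_i$, and that the stratumwise limits exist by induction on depth), and your write-up supplies precisely that bookkeeping, including the passage from $S^{N-1}$ to $S^{2N-1}$ and the splitting of the integral over the finite set of strata. The proposal is correct and follows the same route as the paper, with slightly more care about measurability than the original.
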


\begin{corollary}
For $i \in \{1,\ldots,t\}$, $\lim_{R \to + \infty} \Lambda_0(\overline{ V_i} \cap B_R,  V_i \cap S_R)$ exists and is finite. Furthermore, we have
$$\lim_{R \to + \infty} \Lambda_0(X\cap B_R, X \cap S_R) = \sum_{i=1}^t \eta( V,{\bf 1}_X) \cdot \lim_{R \to + \infty} \Lambda_0(\overline{ V_i} \cap B_R,  V_i \cap S_R).$$
\end{corollary}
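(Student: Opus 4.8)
The plan is to apply Proposition~\ref{GaussBonnetSemiAlg} and the proposition that immediately precedes this corollary not to $X$ itself but, one at a time, to each $\overline{V_i}$. Recall that $\overline{V_i}$ is a closed equidimensional complex algebraic subset of $\mathbb{C}^N$, that it is a union of strata of $\mathcal{V}$, hence carries the induced Whitney stratification $\{V_j \mid V_j\subseteq\overline{V_i}\}$, and that its regular part (the union of its strata of maximal dimension) is $V_i$. I would first fix a vector $v\in S^{2N-1}$ that is generic simultaneously for $X$ and for all the $\overline{V_i}$, which excludes only a set of measure zero. The point to notice is that for $p\in V_j\cap S_R$ both the inwards-pointing condition defining $\mathcal{I}^{V_j}_{v,R}$ and the Morse index $\sigma(p)$ of $v^*_{\vert V_j\cap S_R}$ depend only on the stratum $V_j$, not on the ambient algebraic set containing it, so the number
$$E_j := \frac{1}{s_{2N-1}}\int_{S^{2N-1}}\lim_{R\to+\infty}\sum_{p\in\mathcal{I}^{V_j}_{v,R}}(-1)^{\sigma(p)}\,dv$$
is an invariant of $V_j$ alone; here the inner limit exists and is finite by the induction on the depth of the strata carried out just above, so $E_j$ is well defined and finite.

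Next I would use the stratum-by-stratum form of the Gauss--Bonnet computation: as in the proof of Proposition~\ref{GaussBonnetSemiAlg}, the measure $\Lambda_0(Y\cap B_R,\cdot)$ restricted to the boundary sphere splits over the strata, and for a stratum $V_j$ of a closed complex algebraic union of strata $Y$ one has $\Lambda_0(Y\cap B_R,V_j\cap S_R)=\frac{1}{s_{2N-1}}\int_{S^{2N-1}}\sum_{p\in\mathcal{I}^{V_j}_{v,R}}(-1)^{\sigma(p)}\,{\rm ind}_{nor}(v^*,Y\cap S_R,p)\,dv$, with ${\rm ind}_{nor}(v^*,Y\cap S_R,p)=\eta(V_j,{\bf 1}_Y)$. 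Applying this with $Y=\overline{V_i}$ and $j=i$, and using that $\eta(V_i,{\bf 1}_{\overline{V_i}})=1$ because $V_i$ is the open dense stratum of $\overline{V_i}$, then letting $R\to+\infty$, yields that $\lim_{R\to+\infty}\Lambda_0(\overline{V_i}\cap B_R,V_i\cap S_R)$ exists, is finite, and equals $E_i$. This settles the first assertion. (Finiteness alone could also be deduced from Corollary~5.7 in \cite{DutertreAdvGeo} applied to $\overline{V_i}$, combined with the splitting $\chi(\overline{V_i}\cap B_R)=\Lambda_0(\overline{V_i},\overline{V_i}\cap B_R)+\Lambda_0(\overline{V_i}\cap B_R,\overline{V_i}\cap S_R)$ and additivity of $\Lambda_0$ over the boundary strata, but the argument above also identifies the limit.)

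For the displayed additivity formula I would apply the proposition immediately preceding this corollary to $X$ itself, obtaining $\lim_{R\to+\infty}\Lambda_0(X\cap B_R,X\cap S_R)=\sum_{i=1}^t\eta(V_i,{\bf 1}_X)\,E_i$, and then replace each $E_i$ by $\lim_{R\to+\infty}\Lambda_0(\overline{V_i}\cap B_R,V_i\cap S_R)$ using the previous step. The main obstacle is precisely the middle step: one has to be sure that the Gauss--Bonnet measure on the boundary sphere genuinely decomposes over individual strata in the stated way, so that the contribution of the single stratum $V_i\cap S_R$ to $\Lambda_0(\overline{V_i}\cap B_R,\cdot)$ is exactly $E_i$ while the lower strata $V_j\subseteq\overline{V_i}\setminus V_i$ --- which do contribute to $\Lambda_0(\overline{V_i}\cap B_R,\overline{V_i}\cap S_R)$ --- contribute nothing to $\Lambda_0(\overline{V_i}\cap B_R,V_i\cap S_R)$. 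Once this stratified refinement of Proposition~\ref{GaussBonnetSemiAlg} is in hand, the corollary is a direct substitution.
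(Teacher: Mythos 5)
Your argument is correct and is essentially the proof the paper intends: the paper simply cites Corollary 6.8 of \cite{DutertreIsrael}, whose argument is exactly what you spell out — the inwards-pointing condition and the tangential Morse index $\sigma(p)$ depend only on the stratum, the normal index for the top stratum of $\overline{V_i}$ is $\eta(V_i,{\bf 1}_{\overline{V_i}})=1$, so the localized measure $\Lambda_0(\overline{V_i}\cap B_R, V_i\cap S_R)$ recovers $E_i$, and substitution into the preceding proposition gives the displayed formula. The stratified localization of $\Lambda_0$ over the boundary strata that you flag as the key step is built into the Broecker--Kuppe description of the Lipschitz--Killing measures used throughout Section 6 of \cite{DutertreIsrael}, so no gap remains.
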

\proof Same proof as Corollary 6.8 in \cite{DutertreIsrael}. \endproof

\begin{theorem}
For any stratum $V_i$, we have 
$$ \lim_{R \to + \infty} \Lambda_0(\overline{ V_i} \cap B_R,  V_i \cap S_R)  =\sum_{e=1}^{ d_{V_i}} \lim_{R \to + \infty} \frac{1}{b_{2e} R^{2e}} \frac{1}{s_{2N-2e-1}} \int_{ V_i \cap B_R} K_{2 d_{V_i}-2e}(x) dx,$$
where $b_{2e}$ is the volume of the $2e$-dimensional unit ball and $d_{V_i}$ is the dimension of $V_i$.
\end{theorem}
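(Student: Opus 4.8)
The plan is to reproduce, nearly verbatim, the argument of \cite{DutertreIsrael}, Section~6, replacing the limit $\epsilon\to 0^+$ used there by the limit $R\to+\infty$ here.

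First I would reduce to a smooth computation. Since $\overline{V_i}$ is reduced and equidimensional, $V_i$ is exactly its regular part, and since $S_R$ is stratified transversal to $\overline{V_i}$ for $R$ large, a neighbourhood in $\overline{V_i}\cap B_R$ of any point of $V_i\cap S_R$ is a smooth manifold with boundary, the boundary being a piece of $V_i\cap S_R$. As Lipschitz--Killing curvature measures are local, $\Lambda_0(\overline{V_i}\cap B_R, V_i\cap S_R)=\Lambda_0(V_i\cap B_R, V_i\cap S_R)$, where from now on $V_i$ is viewed as a smooth submanifold of $\mathbb{C}^N\cong\mathbb{R}^{2N}$ of real dimension $m=2d_{V_i}$ and $V_i\cap B_R$ is a smooth manifold with boundary $V_i\cap S_R$. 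Splitting $\chi(V_i\cap B_R)=\Lambda_0(V_i\cap B_R,V_i\cap\mathring{B_R})+\Lambda_0(V_i\cap B_R,V_i\cap S_R)$ and noticing that near interior points the measure coincides with the Gauss--Bonnet measure of the smooth manifold $V_i$, so that $\Lambda_0(V_i\cap B_R,V_i\cap\mathring{B_R})=\frac{1}{s_{2N-1}}\int_{V_i\cap B_R}K_{2d_{V_i}}(x)\,dx$, I would identify $\Lambda_0(V_i\cap B_R,V_i\cap S_R)$ as exactly the boundary term of the Chern--Gauss--Bonnet formula for the manifold with boundary $V_i\cap B_R$.

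Next I would compute this boundary term. By Fu's normal-cycle description of the curvature measures \cite{FuAmerJ94} (or the stratified-Morse-theoretic one of Broecker--Kuppe \cite{BroeckerKuppe}), it is an integral over $V_i\cap S_R$ of a universal density, polynomial in the second fundamental form of $V_i\cap S_R$ inside $V_i$ and in the curvature tensor of $V_i$. Because $\dim(V_i\cap S_R)=2d_{V_i}-1$ is odd, the purely intrinsic (Gauss--Bonnet) part vanishes, so only the genuinely mixed terms remain. The second fundamental form of $V_i\cap S_R$ in $V_i$ is governed by the Hessian of $\frac12\rho_E^2$ restricted to $V_i$; expanding it via Weyl's tube formula in terms of the position vector and of the curvature of $V_i$, and then integrating out the radial variable on $(0,R]$, I would rewrite each mixed term as an integral over $V_i\cap B_R$ of a Lipschitz--Killing curvature $K_{2d_{V_i}-2e}$ of $V_i$, carrying the normalisation $\frac{1}{b_{2e}R^{2e}}\cdot\frac{1}{s_{2N-2e-1}}$, the factor $b_{2e}R^{2e}$ being the volume of the $2e$-ball of radius $R$ produced by the radial integration. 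Since $V_i$ is a complex submanifold its odd Lipschitz--Killing curvatures vanish identically, so only the even indices $2d_{V_i}-2e$ occur, with $e$ running from $1$ to $d_{V_i}$, the top index $2d_{V_i}$ being absent since it is carried by the interior term. Finally I would let $R\to+\infty$, using Corollary~5.7 of \cite{DutertreAdvGeo} to guarantee that each of the limits $\lim_{R\to+\infty}\frac{1}{b_{2e}R^{2e}}\frac{1}{s_{2N-2e-1}}\int_{V_i\cap B_R}K_{2d_{V_i}-2e}(x)\,dx$ exists and is finite; this yields the claimed identity.

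The hard part will be this last computation: expanding the Chern--Gauss--Bonnet boundary density through Weyl's tube formula, keeping track of all combinatorial constants so that the radial integration produces exactly the volumes $b_{2e}R^{2e}$, and controlling the terms of lower order in $R$ so that they contribute nothing in the limit. This is the technical heart of \cite{DutertreIsrael}, Section~6, and I would rely heavily on those computations; the only genuinely new feature at infinity is the finiteness of the limits involved, for which \cite{DutertreAdvGeo} suffices, so no further analytic input is needed.
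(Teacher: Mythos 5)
There is a genuine gap, and it comes in two places. First, your reduction to the smooth case leads you to apply the Gauss--Bonnet formula $\chi(Y)=\Lambda_0(Y,Y)$ and the splitting $\chi(V_i\cap B_R)=\Lambda_0(V_i\cap B_R,V_i\cap\mathring{B_R})+\Lambda_0(V_i\cap B_R,V_i\cap S_R)$ to $Y=V_i\cap B_R$. But a stratum $V_i$ of positive depth is only locally closed: $V_i\cap B_R$ is not compact, the Lipschitz--Killing measures are defined for \emph{closed} (sub)analytic sets, and Gauss--Bonnet fails for the open manifold $V_i\cap B_R$ because of the missing ``ends'' along $\overline{V_i}\setminus V_i$. (The locality argument identifying $\Lambda_0(\overline{V_i}\cap B_R,V_i\cap S_R)$ with a boundary integral over $V_i\cap S_R$ is fine --- also note $V_i$ need not be the whole regular part of $\overline{V_i}$, only an open subset of it, which is what locality actually requires --- but the subsequent bookkeeping with $\chi(V_i\cap B_R)$ is not.) The paper avoids this by always working with the closed sets $\overline{V_i}$, writing $\chi(\overline{V_i}\cap B_R)=\Lambda_0(\overline{V_i},\overline{V_i}\cap B_R)+\Lambda_0(\overline{V_i}\cap B_R,\overline{V_i}\cap S_R)$, invoking the Gauss--Bonnet-at-infinity theorems of \cite{DutertreGeoDedicata} (Theorems 3.5 and 4.3) for $\overline{V_i}$, and then running an \emph{induction on the depth of the stratum}: the corollary preceding the theorem expresses $\lim_R\Lambda_0(\overline{V_i}\cap B_R,\overline{V_i}\cap S_R)$ as $\sum_W\eta(W,{\bf 1}_{\overline{V_i}})\lim_R\Lambda_0(\overline{W}\cap B_R,W\cap S_R)$, and the contributions of the strata $W\subset\overline{V_i}\setminus V_i$ are subtracted off using the inductive hypothesis together with the decomposition of the measures $\Lambda_k(\overline{V_i},-)$ from \cite{DutertreIsrael}, Section 4. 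This induction is absent from your proposal and is exactly what handles the non-closedness of $V_i$.

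Second, the analytic core of your plan --- expanding the Chern--Gauss--Bonnet boundary density on $V_i\cap S_R$ via Weyl's tube formula and ``integrating out the radial variable'' so as to produce, for each fixed $R$ up to lower-order terms, the quantities $\frac{1}{b_{2e}R^{2e}}\frac{1}{s_{2N-2e-1}}\int_{V_i\cap B_R}K_{2d_{V_i}-2e}\,dx$ --- does not correspond to any identity used here, and I do not see how to make it work. There is no coarea-type relation converting an integral of second-fundamental-form polynomials over the slice $S_R$ into normalized curvature integrals over the ball $B_R$; the passage from the boundary term to the bulk terms in \cite{DutertreGeoDedicata} and \cite{DutertreIsrael} is genuinely integral-geometric (Morse-theoretic description of $\Lambda_0(\cdot,\cdot\cap S_R)$ via inward-pointing critical points of generic linear forms --- this is the role of the two propositions preceding the theorem --- combined with linear kinematic/exchange formulas), and the identity holds only in the limit $R\to+\infty$, not asymptotically term by term on $S_R$. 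Relatedly, Corollary 5.7 of \cite{DutertreAdvGeo} gives finiteness of $\lim_R\Lambda_0(X,X\cap B_R)$ for a \emph{closed} set, not of the individual limits $\lim_R\frac{1}{b_{2e}R^{2e}}\Lambda_{2e}(\overline{V_i},V_i\cap B_R)$ for a non-closed stratum; in the paper these limits come out of the same inductive argument. You should restructure the proof around the closed sets $\overline{V_i}$ and the depth induction rather than around a pointwise boundary-density computation.
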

\proof Let us treat first the case of a stratum $V$ of depth $0$ i.e., $\overline{V}=V$. This is trivial if ${\rm dim}(V)=0$. If ${\rm dim}(V)>0$, then we have
$$\chi(V \cap B_R)=\Lambda_0(V,V \cap B_R) + \Lambda_0(V \cap B_R, V \cap S_R),$$
and so
$$\lim_{R \to + \infty} \Lambda_0(V\cap B_R,V \cap S_R) = \chi(V) - \lim_{R \to \infty} \Lambda_0(V,V\cap B_R).$$
The result is then just an application of Theorem 4.3 in \cite{DutertreGeoDedicata}. 

If $V$ is a stratum of depth greater or equal to $1$, then we also have
$$\chi(\overline{V} \cap B_R)=\Lambda_0(\overline{V},\overline{V} \cap B_R) + \Lambda_0(\overline{V} \cap B_R, \overline{V} \cap S_R),$$
and 
$$\lim_{R \to + \infty} \Lambda_0(\overline{V} \cap B_R ,\overline{V} \cap S_R) = \chi(V) - \lim_{R \to \infty} \Lambda_0(\overline{V},\overline{V}\cap B_R).$$
By Theorem 3.5 in \cite{DutertreGeoDedicata}, we obtain that
$$\lim_{R \to + \infty} \Lambda_0(\overline{V} \cap B_R ,\overline{V} \cap S_R)=\sum_{k=1}^{d_V} \lim_{R \to + \infty} \frac{\Lambda_k(\overline{V},\overline{V} \cap B_R)}{b_k R^k}.$$   
Using the description of the Lipschitz-Killing measures for complex analytic sets done in \cite{DutertreIsrael}, Section 4, we find that 
$$\displaylines{
 \lim_{R \to + \infty} \Lambda_0(\overline{V} \cap B_R, \overline{V} \cap S_R) = \sum_{e=1}^{d_V} \lim_{R \to +\infty} \frac{1}{b_{2e}R^{2e}} \frac{1}{s_{2N-2e-1}} \int_{V \cap B_R} K_{2d_V-2e}(x) dx \hfill \cr
\hfill + \sum_{W \subset \overline{V} \setminus V} \eta(W,{\bf 1}_V) \sum_{e=1}^{d_W} \lim_{R \to +\infty} \frac{1}{b_{2e}R^{2e}} \frac{1}{s_{2N-2e-1} }\int_{W \cap B_R} K_{2d_W-2e}(x) dx. \quad \cr
}$$
Comparing this equality with the previous corollary and applying the induction hypothesis gives the result. \endproof

\begin{corollary}
If $X$ is equidimensional then
$${\rm Eu}(X)= \lim_{R \to + \infty} \Lambda_0(X \cap B_R, X_{\rm reg} \cap B_R),$$
and 
$$\lim_{R \to + \infty} \Lambda_0(X \cap B_R, X_{\rm reg} \cap S_R)= \sum_{i=0}^{d-1} (-1)^i \alpha_X^{(i)}.$$
\end{corollary}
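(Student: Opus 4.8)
The plan is to deduce the first equality essentially as in \cite{DutertreIsrael}, Corollary 6.10, and then to obtain the second one from it together with the previous theorem and the Seade--Tib\u{a}r--Verjovsky polar formula \cite{SeadeTibarVerjovsky2} recalled in Section 5. Since $X$ is equidimensional, $\overline{V_t}=X$. For the first equality I would apply the Gauss--Bonnet theorem $\chi(\overline{V_i}\cap B_R)=\Lambda_0(\overline{V_i}\cap B_R,\overline{V_i}\cap B_R)$ to every stratum closure $\overline{V_i}$, write ${\rm Eu}_X=\sum_{i=1}^t m_i\,{\bf 1}_{\overline{V_i}}$ for the change of basis between the two distinguished bases of constructible functions, and consider the combination $\sum_{i=1}^t m_i\,\Lambda_0(\overline{V_i}\cap B_R,\overline{V_i}\cap B_R)$. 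On the one hand it equals $\sum_i m_i\,\chi(\overline{V_i}\cap B_R)=\chi(X\cap B_R,{\rm Eu}_X)$, which tends to $\chi(X,{\rm Eu}_X)={\rm Eu}(X)$ by Remark \ref{Remarque}. On the other hand, decomposing each $\overline{V_i}\cap B_R$ along its strata and using the stratified description of the Lipschitz--Killing measures of complex analytic sets (\cite{DutertreIsrael}, Section 4, after \cite{BroeckerKuppe}), namely that $\Lambda_0(\overline{V_i}\cap B_R,V_j\cap B_R)=\eta(V_j,{\bf 1}_{\overline{V_i}})\,\Lambda_0(\overline{V_j}\cap B_R,V_j\cap B_R)$, the coefficient of $\Lambda_0(\overline{V_j}\cap B_R,V_j\cap B_R)$ becomes $\sum_i m_i\,\eta(V_j,{\bf 1}_{\overline{V_i}})=\eta(V_j,{\rm Eu}_X)$, which is $1$ for $V_j=V_t=X_{\rm reg}$ and $0$ otherwise by the defining property of the Euler obstruction. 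Hence $\sum_i m_i\,\Lambda_0(\overline{V_i}\cap B_R,\overline{V_i}\cap B_R)=\Lambda_0(X\cap B_R,X_{\rm reg}\cap B_R)$, and letting $R\to+\infty$ gives ${\rm Eu}(X)=\lim_{R\to+\infty}\Lambda_0(X\cap B_R,X_{\rm reg}\cap B_R)$.

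For the second equality I would split $\Lambda_0(X\cap B_R,X_{\rm reg}\cap B_R)=\Lambda_0(X\cap B_R,X_{\rm reg}\cap\mathring{B_R})+\Lambda_0(X\cap B_R,X_{\rm reg}\cap S_R)$. By locality of the measures the first term equals $\Lambda_0(X,X_{\rm reg}\cap\mathring{B_R})=\frac{1}{s_{2N-1}}\int_{X_{\rm reg}\cap\mathring{B_R}}K_{2d}(x)\,dx$, $X$ being smooth along $X_{\rm reg}$; as in \cite{DutertreIsrael}, Section 6, $\Lambda_0(X,X_{\rm reg}\cap S_R)\to 0$ and, by \cite{DutertreAdvGeo}, Corollary 5.7, $\lim_{R\to+\infty}\frac{1}{s_{2N-1}}\int_{X_{\rm reg}\cap B_R}K_{2d}(x)\,dx$ exists and is finite. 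By the smooth-case Gauss--Bonnet computations of \cite{DutertreGeoDedicata} applied to $X_{\rm reg}$, this limit is $(-1)^d$ times the number of critical points of a generic linear function on $X_{\rm reg}$, i.e. $(-1)^d\alpha_X^{(d)}={\rm Eu}_{l,c}^X$ (Corollary \ref{TopologyXgenericfibreEqui}). Combining with the first equality and the identity ${\rm Eu}(X)=\sum_{i=0}^d(-1)^i\alpha_X^{(i)}$, I get
$$\lim_{R\to+\infty}\Lambda_0(X\cap B_R,X_{\rm reg}\cap S_R)={\rm Eu}(X)-(-1)^d\alpha_X^{(d)}=\sum_{i=0}^{d-1}(-1)^i\alpha_X^{(i)}.$$
Alternatively, the previous theorem applied to $V_t$ rewrites this last limit directly as $\sum_{e=1}^{d}\lim_{R\to+\infty}\frac{1}{b_{2e}R^{2e}}\frac{1}{s_{2N-2e-1}}\int_{X_{\rm reg}\cap B_R}K_{2d-2e}(x)\,dx$, so the statement also expresses $\sum_{i=0}^{d-1}(-1)^i\alpha_X^{(i)}$ as a sum of Gauss--Bonnet curvature integrals over the regular part.

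The step I expect to be the main obstacle is the bookkeeping with the stratified Lipschitz--Killing measures in the first equality: establishing and applying the multiplicativity $\Lambda_0(\overline{V_i}\cap B_R,V_j\cap B_R)=\eta(V_j,{\bf 1}_{\overline{V_i}})\,\Lambda_0(\overline{V_j}\cap B_R,V_j\cap B_R)$ for the stratified-boundary sets $\overline{V_i}\cap B_R$, and controlling all the limits over the non-compact regular locus $X_{\rm reg}$, whose frontier contributes both at infinity and along $X_{\rm sing}$. This is precisely where the analysis of \cite{DutertreIsrael}, Section 6, is imitated, which is why only a sketch is needed here.
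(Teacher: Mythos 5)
Your first equality is proved by a genuinely different route from the paper's. The paper simply quotes Corollary 5.3 of \cite{DutertreIsrael}, which expresses ${\rm Eu}(X)$ as $\sum_{e=0}^d \lim_{R\to+\infty} \frac{1}{b_{2e}R^{2e}}\frac{1}{s_{2N-2e-1}}\int_{X_{\rm reg}\cap B_R}K_{2(d-e)}(x)\,dx$, identifies the $e\ge 1$ part of that sum with $\lim_{R\to+\infty}\Lambda_0(X\cap B_R, X_{\rm reg}\cap S_R)$ by applying the preceding theorem to the stratum $V_t$, and identifies the $e=0$ part with $\lim_{R\to+\infty}\Lambda_0(X\cap B_R, X_{\rm reg}\cap\mathring{B_R})$ by locality. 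You instead re-derive the change of basis from scratch: Gauss--Bonnet on each $\overline{V_i}\cap B_R$, the expansion ${\rm Eu}_X=\sum_i m_i{\bf 1}_{\overline{V_i}}$, and $\eta(V_j,{\rm Eu}_X)=\delta_{jt}$. This is essentially a re-proof of the quoted corollary of \cite{DutertreIsrael}, so it is legitimate, with one caveat: the factorization $\Lambda_0(\overline{V_i}\cap B_R, V_j\cap B_R)=\eta(V_j,{\bf 1}_{\overline{V_i}})\,\Lambda_0(\overline{V_j}\cap B_R, V_j\cap B_R)$ is available at fixed $R$ only on the interior strata $V_j\cap\mathring{B_R}$ (locality plus Section 4 of \cite{DutertreIsrael}); on the boundary strata $V_j\cap S_R$ the paper establishes it only after passing to the limit $R\to+\infty$ (the corollary preceding the last theorem). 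You should therefore state your identity as an identity of limits rather than of measures at fixed $R$; with that adjustment the argument closes, and what your version buys is independence from Corollary 5.3 of \cite{DutertreIsrael} at the cost of redoing its bookkeeping.

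There is, however, a genuine gap in the second equality, at the step where you claim $\lim_{R\to+\infty}\frac{1}{s_{2N-1}}\int_{X_{\rm reg}\cap B_R}K_{2d}(x)\,dx=(-1)^d\alpha_X^{(d)}$ ``by the smooth-case Gauss--Bonnet computations of \cite{DutertreGeoDedicata}''. Those real-variable results express such limits in terms of the Euler characteristic and the link at infinity of the set; they say nothing about critical points of generic linear forms and cannot produce the polar invariant $\alpha_X^{(d)}$. This identification is the one nontrivial complex-geometric input of the whole corollary: the paper obtains it by rewriting $\frac{1}{s_{2N-1}}K_{2d}$ as the top Chern form ${\rm ch}_d(X_{\rm reg})$ and invoking Shifrin's exchange formula \cite{Shifrin}, after which Theorem 3.4 of \cite{SeadeTibarVerjovsky2} gives ${\rm Eu}(X)-(-1)^d\alpha_X^{(d)}=\sum_{i=0}^{d-1}(-1)^i\alpha_X^{(i)}$ exactly as you conclude. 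Citing Corollary \ref{TopologyXgenericfibreEqui} for ${\rm Eu}_{l,c}^X=(-1)^d\alpha_X^{(d)}$ does not repair this, because nothing in Section 5 connects ${\rm Eu}_{l,c}^X$ to the curvature integral; without the exchange formula the chain breaks at precisely this point.
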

\proof By Corollary 5.3 in \cite{DutertreIsrael}, we know that
$$ {\rm Eu}(X)= \sum_{e=0}^d \lim_{R \to +\infty} \frac{1}{b_{2e} R^{2e}} \frac{1}{s_{2N-2e-1}} \int_{X_{{\rm reg}} \cap B_R} K_{2(d-e)}(x) dx,$$
and so 
$${\rm Eu}(X) = \lim_{R \to +\infty} \frac{1}{s_{2N-1}} \int_{X_{{\rm reg}} \cap B_R} K_{2d}(x) dx  + \lim_{R \to + \infty} \Lambda_0(X \cap B_R, X_{\rm reg} \cap S_R) $$
$$=  \lim_{R \to + \infty} \Lambda_0(X , X_{\rm reg} \cap B_R)+  \lim_{R \to + \infty} \Lambda_0(X \cap B_R, X_{\rm reg} \cap S_R)$$
$$=  \lim_{R \to + \infty} \Lambda_0(X , X_{\rm reg} \cap \mathring{B_R})+  \lim_{R \to + \infty} \Lambda_0(X \cap B_R, X_{\rm reg} \cap S_R)$$
$$=  \lim_{R \to + \infty} \Lambda_0(X  \cap B_R, X_{\rm reg} \cap \mathring{B_R})+  \lim_{R \to + \infty} \Lambda_0(X \cap B_R, X_{\rm reg} \cap S_R)$$
$$=  \lim_{R \to + \infty} \Lambda_0(X \cap B_R , X_{\rm reg} \cap B_R).$$
To prove the second equality, we use the fact that 
$$\frac{1}{s_{2N-1}} \int_{X_{\rm reg} \cap B_R} K_{2d}(x) dx =  \int_{X_{\rm reg} \cap B_R} {\rm ch}_d (X_{\rm reg}),$$
where ${\rm ch}_d (X_{\rm reg})$ is the $d$-th Chern form on $X_{\rm reg}$ and the exchange formula proved by Shifrin in \cite{Shifrin}, page 103. Passing to the limit as $R \to +\infty$, this gives that 
$$ \lim_{R \to +\infty} \frac{1}{s_{2N-1}} \int_{X_{{\rm reg}} \cap B_R} K_{2d}(x) dx =(-1)^d \alpha_X^{(d)}.$$
We just have to apply Theorem 3.4 in \cite{SeadeTibarVerjovsky2} to conclude. \endproof

\end{document}